\newtheorem{theorem}{Theorem}[section]
\newtheorem{lemma}[theorem]{Lemma}
\newtheorem{corollary}[theorem]{Corollary}
\theoremstyle{definition}
\newtheorem{definition}[theorem]{Definition}
\def\d{\displaystyle}
\def\x{\xi}
\def\p{\partial}
\def\P{\mathcal{P}}
\def\G{\mathcal{G}}
\def\H{\mathcal{H}}
\begin{document}
\title {\textbf{Intersection numbers in the curve complex via subsurface projections}}
\author{Yohsuke Watanabe\thanks{The author was partially supported from U.S. National Science Foundation grants DMS 1107452, 1107263, 1107367 "RNMS: Geometric Structures and Representation Varieties" (the GEAR Network).}}
\date{}
\maketitle

\begin{abstract}
A classical inequality which is due to Lickorish and Hempel says that the distance between two curves in the curve complex can be measured by their intersection number. In this paper, we show a converse version; the intersection number of two curves can be measured by the sum of all subsurface projection distances between them. As an application of this result, we obtain a coarse decreasing property of the intersection numbers of the multicurves contained in tight multigeodesics. Furthermore, by using this property, we give an algorithm for determining the distance between two curves in the curve complex. 
Indeed, such algorithms have been also found by Birman--Margalit--Menasco, Leasure, Shackleton, and Webb: we will briefly compare our algorithm with some of their algorithms, for detailed quantitative comparison of all known algorithms including our algorithm, we refer the reader to the paper of Birman--Margalit--Menasco \cite{BMM}.
\end{abstract}



\section{Introduction}	

Let $S=S_{g,n}$ be a compact surface of genus $g$ and $n$ boundary components. Throughout the paper, we assume 
\begin{itemize}
\item curves are simple, closed, essential, and not parallel to any boundary component. Arcs are simple and essential. 
\item the isotopy of curves is free and the isotopy of arcs is relative to the boundaries setwise unless we say pointwise. 
\end{itemize}
We denote the complexity of $S_{g,n}$ by $\x(S_{g,n}) =3g+n-3$ and the Euler characteristic of $S_{g,n}$ by $\chi(S_{g,n})=2-2g-n$.

In \cite{HAR}, Harvey associated the set of curves in $S$ with the simplicial complex, the \emph{curve complex} $C(S)$. Suppose $\x(S)\geq 1$. The vertices of $C(S)$ are isotopy classes of curves and the simplices of $C(S)$ are collections of curves that can be mutually realized to intersect the minimal possible geometric intersection number, which is $0$ for $\x(S)>1$, $1$ for $S=S_{1,1}$, and $2$ for $S=S_{0,4}$. 
We also review the \emph{arc complex} $A(S)$, and the \emph{arc and curve complex} $AC(S)$. Suppose $\x(S)\geq 0$, the vertices of $A(S)$ ($AC(S)$) are isotopy classes of arcs (arcs and curves) and the simplices of $A(S)$ ($AC(S)$) are collections of arcs (arcs and curves) that can be mutually realized to be disjoint in $S$.

In this paper, we focus on the $1$--skeletons of the above complexes. We assign length $1$ to each edge, then they are all geodesic metric spaces, see \cite{FM}.
Suppose $x,y\in C(S)$ and $A,B\subseteq C(S)$; we let $d_{S}(x,y)$ denote the length of a geodesic between $x$ and $y$, and we define $\d d_{S}(A,B)$ as the diameter of $A \cup B$ in $C(S).$
Suppose $x,y\in AC(S)$ and $A,B\subseteq AC(S)$; the intersection number, $i(x,y)$ is the number of minimal possible geometric intersections of $x$ and $y$ in their isotopy classes. We say $x$ and $y$ are in \emph{minimal position} if they realize the intersection number. Lastly, we define $\d i(A,B)=\sum_{a\in A,b\in B} i (a,b).$

We review the following classical inequality, which is due to Lickorish \cite{LIC} and Hempel \cite{HEM}. For the rest of this paper, we always assume the base of $\log$ functions is $2$, and we treat $\log0 =0$. 

\begin{lemma}[\cite{HEM},\cite{LIC}] \label{basic}
Let $x,y\in C(S)$. Then, $$d_{S}(x,y)\leq 2\cdot \log i(x,y)+2.$$
\end{lemma}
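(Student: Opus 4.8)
The plan is to argue by induction on $n=i(x,y)$, with an elementary surgery supplying the inductive step: from $x$ and $y$ in minimal position with $n\ge 2$ I would produce a curve $z$ with $i(x,z)=0$ and $i(z,y)\le\lceil n/2\rceil$, and then recurse on the pair $z,y$. The base cases are clear. If $n=0$ then $d_S(x,y)\le 1\le 2=2\log 0+2$. If $n=1$, a regular neighborhood $N$ of $x\cup y$ is a one-holed torus whose boundary curve $z$ is disjoint from both $x$ and $y$; if $z$ is essential and non-peripheral in $S$ this gives $d_S(x,y)\le 2=2\log 1+2$, while otherwise $S=S_{1,1}$ and $d_S(x,y)=1$.

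For the surgery step, assume first that $\x(S)\ge 2$. Put $x,y$ in minimal position and let $\g_1,\dots,\g_n$ be the arcs into which $x$ cuts $y$. Fix any $\g_i$; its two endpoints split $x$ into two arcs, and picking the one, call it $\a$, whose interior meets $y$ fewer times, a count around the circle $x$ gives $|\mathrm{int}(\a)\cap y|\le\lceil n/2\rceil-1$. Now form the closed curve $\g_i\cup\a$, smooth its two corners, and push it off $x$, obtaining a simple closed curve $z$ disjoint from $x$. One then checks $i(z,y)\le|\mathrm{int}(\a)\cap y|+1\le\lceil n/2\rceil$ --- the copy of $\g_i$ runs parallel to a sub-arc of $y$, so contributes nothing in its interior, and the two endpoint regions add at most $1$ when the side of the push-off is chosen well --- and that $z$ is essential and not isotopic to $x$; the latter is exactly where the absence of bigons between $x$ and $y$ is used, together with the room afforded by $\x(S)\ge 2$. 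Verifying these two facts --- essentiality of $z$ and the sharp count $i(z,y)\le\lceil n/2\rceil$ --- is, I expect, the main obstacle.

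Granting this surgery lemma, the induction is routine arithmetic. For $n\ge 3$, put $m=i(z,y)\le\lceil n/2\rceil<n$; by the inductive hypothesis (or a base case) $d_S(z,y)\le 2\log m+2$, so
\[
d_S(x,y)\le 1+d_S(z,y)\le 2\log\lceil n/2\rceil+3\le 2\log n+2,
\]
the final inequality because $\lceil n/2\rceil\le n/\sqrt2$ once $n\ge 3$. For $n=2$ the lemma gives $m\le 1$, hence $d_S(z,y)\le 2$ and $d_S(x,y)\le 3\le 4=2\log 2+2$.

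Finally, when $\x(S)\le 1$, i.e.\ $S\in\{S_{1,1},S_{0,4}\}$, the surgery above is useless since any two distinct curves intersect; but here $C(S)$ is the Farey graph, curves correspond to slopes $p/q$, and $i(x,y)=|ps-qr|$ (for $S_{1,1}$) or $2|ps-qr|$ (for $S_{0,4}$), so the inequality reduces to the classical estimate that the Farey distance between $p/q$ and $r/s$ is at most $2\log|ps-qr|+2$, provable by running the Euclidean algorithm on the two slopes and inserting mediants.
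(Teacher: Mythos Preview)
The paper does not supply a proof of this lemma; it is quoted from Hempel and Lickorish and used as background. Your sketch is precisely the classical surgery argument from those references, so there is nothing in the paper to compare against beyond noting that you have reconstructed the intended proof.

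One small remark on your variant of the surgery: the standard version (as in Hempel, or the exposition in Farb--Margalit) chooses the arc on $x$ first --- take $p,q$ adjacent along $x$, so the sub-arc $\alpha\subset x$ has interior disjoint from $y$, and then pick the shorter of the two arcs of $y$ joining $p,q$. This makes the intersection count and the essentiality check cleaner than in your version, where you fix $\gamma_i\subset y$ first and then choose $\alpha\subset x$: with your ordering the claim ``the two endpoint regions add at most $1$'' requires a slightly more delicate case analysis, and if the resulting $z$ happens to be isotopic to $x$ you may be forced to switch to the other arc $\alpha'$ and lose the $\lceil n/2\rceil$ bound. None of this is fatal, but if you write it up you will find the $x$-first ordering smoother.
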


In this paper, we prove a converse to Lemma \ref{basic}. However, since we can easily find $x,y\in C(S)$ such that $i(x,y)\gg 0$ and $d_{S}(x,y)=2$, we need to manipulate the left--hand side of the above inequality; we take all subsurface projection distances into account.
Before we state our results, we recall a beautiful formula derived by Choi--Rafi \cite{CR}. First, we define the following.

\begin{definition}
Suppose $n,m\in \mathbb{Z}$.
\begin{itemize}
\item By $n\stackrel{K,C}{\leq} m$, we mean that there exist $K\geq1$ and $C\geq0$ so that $ n\leq Km+C.$ By $n\stackrel{K,C}{=} m$, we mean that there exist $K\geq1$ and $C\geq0$ so that $\d \frac{1}{K}\cdot m-C\leq n\leq Km+C.$ We call $K$ a \textit{multiplicative constant} and $C$ an \textit{additive constant}. 
We also use notations $n\asymp m$, $n\prec m$ instead of $n\stackrel{K,C}{=} m$, $n\stackrel{K,C}{\leq} m$ respectively. 

\item We let $[n]_{m}=0$ if $n\leq m$ and $[n]_{m}=n$ if $n> m$. We call $m$ a \textit{cut--off constant}.
\end{itemize}
\end{definition}

Recall that a \emph{marking} is a collection of curves which fill a surface.

Choi--Rafi showed

\begin{theorem}[\cite{CR}]\label{yazawa}
There exists $k$ such that for any markings $\sigma$ and $\tau$ on $S$, $$\log i(x,y)  \asymp \sum_{Z\subseteq S}[ d_{Z}(\sigma,\tau)]_{k}+\sum_{A\subseteq S} \log [d_{A}(\sigma,\tau)]_{k},$$  where the sum is taken over all subsurfaces $Z$ which are not annuli and $A$ which are annuli.
\end{theorem}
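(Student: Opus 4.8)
The plan is to prove the two estimates packaged inside $\asymp$ separately, after the harmless reduction to the case $\sigma=\{a\}$, $\tau=\{b\}$ of single curves: a marking is the union of a number of curves bounded in terms of $\x(S)$, so $i(\sigma,\tau)\asymp\max_{a\in\sigma,b\in\tau}i(a,b)$, and for each subsurface $Z$ one has $d_{Z}(\sigma,\tau)\asymp d_{Z}(a,b)$ for an appropriate pair $a,b$ crossing $Z$ (the projection of a clean marking to $Z$ has uniformly bounded diameter); so once the formula is known for curves, summing over the boundedly many pairs recovers the marking case. Thus it suffices to prove $$\log i(a,b)\ \asymp\ \sum_{Z}[d_{Z}(a,b)]_{k}+\sum_{A}\log[d_{A}(a,b)]_{k}$$ for curves $a,b$, with $Z$ ranging over non-annular and $A$ over annular subsurfaces.

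For the inequality ``right--hand side $\prec\log i(a,b)$'' I would treat the non-annular and annular contributions separately. If $Z$ is non-annular with $d_{Z}(a,b)>k$, then $\pi_{Z}(a)$ and $\pi_{Z}(b)$, made into curves in $Z$ by the standard surgery, meet inside $Z$ at most a fixed power of $i(a,b)$ times, so Lemma~\ref{basic} applied inside $Z$ gives $[d_{Z}(a,b)]_{k}\prec\log i(a,b)$ with constants depending only on $S$; moreover the whole sum $\sum_{Z}[d_{Z}(a,b)]_{k}$ is comparable to the pants--graph distance between markings extending $a$ and $b$, and hence is itself $\prec\log i(a,b)$ by the recursive halving argument behind Lemma~\ref{basic}, carried out within the subsurfaces of a hierarchy rather than for a single pair of curves. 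If $A$ is the annulus about a curve $\alpha$, the classical twisting estimate $d_{A}(a,b)\prec i(a,b)/\bigl(i(a,\alpha)\,i(b,\alpha)\bigr)$ gives $\log[d_{A}(a,b)]_{k}\prec\log i(a,b)$; to control the sum over $A$, observe that a family of pairwise disjoint curves with twisting exceeding $k$ has at most $\x(S)$ members, while overlapping such curves are linearly time--ordered by Behrstock's inequality, and one checks that their winding regions, in minimal position, account for essentially disjoint batches of intersection points of $a$ with $b$, so the total is again $\prec\log i(a,b)$.

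For the reverse inequality ``$\log i(a,b)\prec$ right--hand side'' I would use a hierarchy between $a$ and $b$ and induct on $\x(S)$. The ground case is that if every subsurface projection of the pair is at most the cutoff then $i(a,b)$ is bounded in terms of $S$; this follows from a resolution argument --- a curve all of whose subsurface restrictions lie close to those of $b$ can be transformed into $b$ in boundedly many controlled moves --- or, equivalently, from the extremal--length comparison, since bounded projections force $a$ and $b$ to be Lipschitz--comparable. For the inductive step one peels off the subsurfaces of largest complexity that carry a projection exceeding $k$: each non-annular such $Z$ multiplies the running intersection number by a factor $\asymp 2^{[d_{Z}(a,b)]_{k}}$ --- this is Lemma~\ref{basic} read in the sharp direction, that large distance forces large intersection --- each twisting region $A$ multiplies it by a factor $\asymp [d_{A}(a,b)]_{k}$, and after these are removed the remaining data lives in subsurfaces of strictly smaller complexity, to which the inductive hypothesis applies; taking logarithms converts the product into the required sum.

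I expect the genuine obstacle, in both directions, to be the logarithmic--versus--linear asymmetry between the annular and the non-annular terms, which is precisely what rules out a direct quasi-isometry with the marking complex. In the ``$\prec\log i$'' direction the delicate point is the assertion that the winding regions of distinct heavily twisted curves contribute intersection points of $a$ and $b$ which are not double--counted against the contributions of the other subsurfaces in a possibly long overlapping chain. In the reverse direction the delicate point is that peeling off one subsurface perturbs the projections to all the others, so at each stage one must track how the projection data and the intersection number co--vary --- a quantitative form of the realization theorem for hierarchies in which twisting enters through its logarithm. Once this bookkeeping is secured, the remaining ingredients are the recursive halving argument underlying Lemma~\ref{basic} and Behrstock--type time--ordering.
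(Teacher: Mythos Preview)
The paper does not prove this theorem: it is quoted from Choi--Rafi \cite{CR}, and the only remark about its proof is that one direction uses the Masur--Minsky distance formula \cite{MM2}, via an argument going back to Rafi \cite{RAF}. The paper's own work is the analogue for \emph{curves} (Theorems~\ref{E''} and~\ref{E}), and for $\x(S)>1$ only the upper bound $\log i\prec\text{RHS}$ is established, by an induction on complexity (Lemma~\ref{tama}, Corollary~\ref{last}) that avoids the distance formula altogether. So there is no proof here against which to compare yours; at the level of strategy, your inductive scheme for the upper bound is close in spirit to the paper's argument for Theorem~\ref{E}, which peels off a vertex of a tight geodesic and controls the change in $\log i$ via Lemma~\ref{i}.

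That said, what you have written is a plan, not a proof, and several of the steps you leave blank are precisely the content of the theorem. For the lower bound you assert that $\sum_{Z}[d_{Z}]_{k}$ is comparable to pants--graph distance --- that \emph{is} a form of the distance formula, so you are invoking the very machinery you appear to bypass --- and the further claim ``hence $\prec\log i$ by the recursive halving argument behind Lemma~\ref{basic}'' does not follow: Lemma~\ref{basic} bounds $d_{S}$ above by $\log i$, not the pants--graph distance, and not in the direction you need. The assertion that winding regions of overlapping heavily-twisted annuli contribute ``essentially disjoint batches of intersection points'' is exactly where Choi--Rafi do real work; Behrstock's inequality time-orders such annuli but does not by itself prevent double counting of intersections. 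For the upper bound, your ``ground case'' (all projections bounded $\Rightarrow$ $i$ bounded) is itself a theorem of roughly the same depth as the statement you are proving; neither an unspecified ``resolution argument'' nor an unspecified ``extremal-length comparison'' constitutes a proof. Your reduction from markings to curves is also not innocuous: a single curve fails to project to many subsurfaces to which the full marking does project, so the sums on the right are genuinely different objects in the two settings. You correctly flag the annular/non-annular asymmetry and the bookkeeping under peeling as the real obstacles, but identifying them is not the same as overcoming them.
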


Choi--Rafi used Masur--Minsky's well--known \emph{distance formula} \cite{MM2} to derive one direction of the above quasi--equality, which is Theorem \ref{E} in the setting of markings; the argument goes back to Rafi's work in \cite{RAF}. 
In this paper, we also prove that direction in the setting of curves where we have more freedom on cut--off constants. We note that our result (in the setting of curves) and Choi-Rafi's result (in the setting of markings) are closely related, see \cite{W4}. However, in this paper, we give a more direct approach; in particular, we do not use the distance formula in our proofs. As a consequence of this approach, we can compute cut--off, additive and multiplicative constants, see Theorem \ref{E''} and Theorem \ref{E}; this effectivizes Choi--Rafi's work since those constants are not given in \cite{CR}.

Now, we state our results.  Throughout the paper, we often use the expression on the right--hand side of the formula in Theorem \ref{yazawa}. We always assume the subsurfaces with $\log$ are annuli and the subsurfaces without $\log$ are non--annuli. We also note that in some cases, we will take a sum over specific subsurfaces; therefore, we always specify them under the $ \sum$--symbols. For instance, in Theorem \ref{yazawa}, $\d \sum_{Z\subseteq S}$ and $\d \sum_{A\subseteq S}$ indicate that all subsurfaces in $S$ are considered.

Let $M=200$. We first show

\begin{theorem}\label{E''}
Suppose $\x(S)=1$. Let $x,y\in C(S)$. We have 
\begin{itemize}
\item $\d \log i(x,y)\leq k^{3} \cdot \bigg( \sum_{Z\subseteq S}[ d_{Z}(x,y)]_{k}+\sum_{A\subseteq S} \log [d_{A}(x,y)]_{k} \bigg)+k^{3}$ for all $k\geq M$.

\item $\d  \log i(x,y)\geq  \frac{1}{2}\cdot \bigg( \d\sum_{Z\subseteq S}[ d_{Z}(x,y)]_{k}+\sum_{A\subseteq S} \log [d_{A}(x,y)]_{k}  \bigg) -2 $ for all $k\geq 3M$.
\end{itemize}
\end{theorem}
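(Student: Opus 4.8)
The plan is to exploit that $\x(S)=1$ forces $S\in\{S_{1,1},S_{0,4}\}$, so $C(S)$ is the Farey graph and the \emph{only} non--annular subsurface of $S$ is $S$ itself; hence the quantity on the right in Theorem~\ref{yazawa} is just $[d_S(x,y)]_k+\sum_{\a}\log[d_\a(x,y)]_k$, the sum over curves $\a\in C(S)$. After a coordinate change by $\mathrm{SL}_2(\mathbb Z)$ (acting on $C(S)$ by isometries preserving $i$) we may assume $x$ is the slope $\infty$, so that $i(x,y)$ is the denominator of the slope $y$ (for $S_{1,1}$; twice it for $S_{0,4}$). Fix a $C(S)$--geodesic $x=v_0,v_1,\dots,v_n=y$. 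I will use the following elementary facts about the Farey graph, \emph{each provable directly with explicit constants} — this is the effectivization, and the source of $M=200$. (i) At each interior vertex $v_j$ there is an integer $\theta_j\ge 2$ (the number of triangles of $C(S)$ at $v_j$ separating $v_{j-1}$ from $v_{j+1}$; $\theta_j=1$ would produce a shortcut), and the slope denominators $d_0=0,d_1=1,d_2,\dots,d_n$ along the geodesic obey $d_{j+1}=\theta_j d_j+d_{j-1}$, whence $\prod_{j=1}^{n-1}\theta_j\ \le\ i(x,y)\ \le\ \prod_{j=1}^{n-1}(\theta_j+1)$ (up to the factor $1$ or $2$ above). (ii) A curve $\a$ that is not an interior vertex of the geodesic has $d_\a(x,y)\le c_0$, while an interior vertex has $d_{v_j}(x,y)=\theta_j+O(1)$, for absolute constants; these are the Bounded--Geodesic--Image estimates at complexity $1$, made explicit, and $M$ is chosen larger than all the constants so produced — indeed $M/2$ will dominate every $O(1)$ below. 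In particular the curves $\a$ with $d_\a(x,y)>k$ are exactly the interior vertices $v_j$ with $\theta_j$ large, and then $\theta_j$ and $d_{v_j}(x,y)$ agree up to an error negligible relative to $k$.

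\emph{Second inequality.} Fix $k\ge 3M$. By (i), $\log i(x,y)\ge\sum_{j=1}^{n-1}\log\theta_j$, and as each $\theta_j\ge 2$ this is $\ge n-1\ge d_S(x,y)-1$; so $[d_S(x,y)]_k\le d_S(x,y)\le n$. Let $B$ be the number of interior vertices with $d_{v_j}(x,y)>k$ and $P=\sum_{\text{those }j}\log\theta_j$; by the last remark in (ii) each such $\theta_j>k-O(1)\ge 2^{9}$, so $P\ge 9B$, and $\sum_{\a}\log[d_\a(x,y)]_k\le P+\tfrac{1}{50}B$ (the $\tfrac{1}{50}$ absorbs $\log(1+O(1)/2^{9})$ per vertex). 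Writing $G$ for the number of remaining interior vertices, $n=1+B+G$ and $\sum_{j=1}^{n-1}\log\theta_j\ge P+G$ give $G\le\log i(x,y)-P$. Therefore
\[
[d_S(x,y)]_k+\sum_{\a}\log[d_\a(x,y)]_k\ \le\ n+P+\tfrac{1}{50}B\ \le\ \log i(x,y)+\tfrac{51}{50}B+1\ \le\ \Bigl(1+\tfrac{51}{450}\Bigr)\log i(x,y)+1,
\]
using $P\le\log i(x,y)$ and $B\le P/9\le \log i(x,y)/9$. Since $\tfrac{51}{450}<1$, the right side is $\le 2\log i(x,y)+4$, i.e. $\log i(x,y)\ge\tfrac12\bigl([d_S(x,y)]_k+\sum_\a\log[d_\a(x,y)]_k\bigr)-2$; the small cases (e.g. $i(x,y)\le 2$, where every projection distance lies below the cut--off) are contained in this computation with $B=G=0$.

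\emph{First inequality.} Fix $k\ge M$. By (i), $\log i(x,y)\le\sum_{j=1}^{n-1}\log(\theta_j+1)\le(n-1)+\sum_{j=1}^{n-1}\log\theta_j$ because $\theta_j\ge 2$. For an interior vertex with $d_{v_j}(x,y)>k$ we have $\theta_j=d_{v_j}(x,y)+O(1)\le 2d_{v_j}(x,y)$, so $\log\theta_j\le 1+\log d_{v_j}(x,y)\le 2\log[d_{v_j}(x,y)]_k$; summing these contributes at most $2\sum_\a\log[d_\a(x,y)]_k$. For the remaining interior vertices $\theta_j\le k+O(1)\le 2k$, so $\log\theta_j\le 2\log k$, and there are at most $n-1<d_S(x,y)$ of them. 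Altogether
\[
\log i(x,y)\ \le\ (n-1)+2\sum_\a\log[d_\a(x,y)]_k+2(\log k)\,d_S(x,y)\ \le\ 2\sum_\a\log[d_\a(x,y)]_k+3(\log k)\,d_S(x,y).
\]
If $d_S(x,y)>k$ then $d_S(x,y)=[d_S(x,y)]_k$, and since $2\le k^{3}$ and $3\log k\le k^{3}$ for $k\ge M$ the right side is $\le k^{3}\bigl([d_S(x,y)]_k+\sum_\a\log[d_\a(x,y)]_k\bigr)$. If $d_S(x,y)\le k$ then $[d_S(x,y)]_k=0$ and $3(\log k)d_S(x,y)\le 3k\log k\le k^{3}$, so the right side is $\le k^{3}\sum_\a\log[d_\a(x,y)]_k+k^{3}$. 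In either case $\log i(x,y)\le k^{3}\bigl([d_S(x,y)]_k+\sum_\a\log[d_\a(x,y)]_k\bigr)+k^{3}$.

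\emph{Main obstacle.} The real work is fact (ii): making Bounded Geodesic Image at complexity $1$ fully effective — pinning down, from the combinatorics of the Farey graph (equivalently the Stern--Brocot tree) rather than from the non--explicit arguments of Masur--Minsky and Choi--Rafi, the absolute constant beyond which a curve must be an interior vertex of \emph{every} geodesic between $x$ and $y$, and the absolute error in $d_{v_j}(x,y)=\theta_j+O(1)$ — and then verifying that $M=200$ dominates these, with the extra room ($3M$ versus $M$) needed to keep the multiplicative constant equal to $\tfrac12$ in the second inequality. Everything else is the continued--fraction bookkeeping above; note that fact (i) already supplies the sharp form $d_S(x,y)\le\log i(x,y)+1$ of Lemma~\ref{basic} in complexity $1$, which is precisely what makes the second inequality go through.
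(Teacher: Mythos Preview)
Your route is the paper's route: telescope along a Farey geodesic and tie the local data at each interior vertex to the annular projection distance there. Your facts (i)--(ii) are exactly the content of the paper's Lemma~\ref{2} and Theorem~\ref{4}; in the paper's language the ratio $i(x_{i-1},y)/i(x_i,y)$ equals $\theta_i$ up to $1$ (this is the same recursion $e_{i-1}=\theta_i e_i+e_{i+1}$ read from the $y$--end), and Theorem~\ref{4} gives the two-sided comparison with $L_i=d_{R(x_i)}(x,y)$.

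Two points to tighten. First, the error in fact (ii) is $O(M)$, not $O(1)$ bounded by $M/2$: one application of BGIT on each side of $v_j$ already costs $2M$, and for $S_{0,4}$ there is an honest factor of $2$ from half-twists (Lemma~\ref{min}), so really $d_{v_j}(x,y)=\lfloor\theta_j/c\rfloor+O(M)$ with $c\in\{1,2\}$. Second, with the true errors your intermediate numerics ($\theta_j>2^9$, the $\tfrac{1}{50}$, and $\theta_j\le 2d_{v_j}$) do not hold as written; for instance $k\ge 3M$ only yields $\theta_j\gtrsim M$, and $\log L_j-\log\theta_j$ can be of size $\approx 3$, not $1/50$. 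The good news is that your final inequalities survive: redoing your second-inequality bookkeeping with $\epsilon\approx 3$ and $c=\min\log\theta_j\gtrsim\log M>7$ still gives a coefficient below $2$, and the first inequality is safe under $k^3$. The paper avoids this bookkeeping in the lower bound by the cleaner device $L_i^-=(3/2)\sqrt{L_i}$, which makes the factor $\tfrac12$ appear immediately after taking $\log$; you might prefer that to the $B$--$P$--$G$ accounting.
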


In order to prove Theorem \ref{E''}, we make new observations in $\S 3$ such as Theorem \ref{4}. 

Furthermore, by an inductive argument on the complexity, we prove

\begin{theorem}\label{E}
Suppose $\x(S)>1$. Let $x,y\in C(S)$. For all $k> 0$, we have $$\log i(x,y)\leq V(k) \cdot \bigg( \sum_{Z\subseteq S}[ d_{Z}(x,y)]_{k}+\sum_{A\subseteq S} \log [d_{A}(x,y)]_{k}\bigg) + V(k)$$ where $V(k)=\big( M^{2}|\chi(S)| \cdot (k+\x(S)\cdot M)  \big)^{\x(S)+2}.$ 
\end{theorem}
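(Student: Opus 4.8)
The plan is to induct on $\x(S)$, with Theorem \ref{E''} serving as the base case $\x(S)=1$. Fix $x,y\in C(S)$ with $\x(S)>1$ and $k>0$. The first step is to reduce to the case where $x$ and $y$ are far apart in $C(S)$: if $d_S(x,y)$ is small (say at most some constant comparable to $M$), then by Lemma \ref{basic} the quantity $\log i(x,y)$ is controlled, but I want a bound in terms of the projection sum; the point is that when $d_S(x,y)$ is small we must instead extract a large projection to some proper subsurface $W$, since $i(x,y)$ being large forces (by a Lickorish--Hempel style estimate applied inside $W$, or by the contrapositive of Theorem \ref{yazawa}/Theorem \ref{E''}) some $d_W(x,y)$ to be large. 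So the real content is: build a curve $z$ with $d_S(x,z)$ and $d_S(z,y)$ both bounded, such that $i(x,z)$ and $i(z,y)$ are each comparable to (a root of) $i(x,y)$, and such that $z$ lives in — or cuts off — a subsurface of strictly smaller complexity where we can invoke the inductive hypothesis.

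The key mechanism I would use is subsurface surgery: pick a curve $z$ with $d_S(x,z)\le 1$ (for instance a component of the boundary of a regular neighborhood of $x\cup$(an arc), or just $z$ disjoint from $x$) chosen so that $S\setminus z$ has a component $W$ with $\x(W)<\x(S)$ into which both $x$ and $y$ project nontrivially, and so that $i_W(\pi_W(x),\pi_W(y))$ captures a definite proportion of $\log i(x,y)$. Concretely, I expect to argue that $\log i(x,y) \prec \log i(x,z) + \log i(z,y) + \log i_W(x,y)$ for a suitable such $z,W$ — the first two terms being controlled because $z$ is within distance $1$ of $x$ and of $y$ respectively and by Lemma \ref{basic} (run in reverse via the new $\S 3$ observations, e.g.\ Theorem \ref{4}) they translate into bounded-distance projection data supported on annuli and small subsurfaces, while the last term is handled by the inductive hypothesis on $W$. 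Summing the pieces: each subsurface $Z\subseteq S$ is either $\subseteq W$ (covered by induction on $W$), or one of boundedly many subsurfaces "near" $z$, or $S$ itself; in all cases $[d_Z(x,y)]_k$ is comparable to the corresponding term coming from $x,z$ or $z,y$ or the $W$-projections, up to the multiplicative loss recorded in $V(k)$.

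The bookkeeping of constants is where the exponent $\x(S)+2$ and the factor $M^2|\chi(S)|(k+\x(S)M)$ in $V(k)$ come from: each level of the induction multiplies the constant by roughly $M^2|\chi(S)|(k+\x(S)M)$ (one factor $M$ from Theorem \ref{E''}-type cut-off adjustments, one from the number of subsurfaces adjacent to $z$, the $|\chi(S)|$ counting how many times we may have to iterate the surgery to exhaust the complexity, and the $k+\x(S)M$ absorbing the shift in cut-off constants forced by passing to a subsurface), and the depth of the recursion is at most $\x(S)$, yielding the power $\x(S)+2$ after also accounting for the base case and the additive term. I would track this by proving the inductive step with an explicit recursion $V_{\x(S)} \le M^2|\chi(S)|(k+\x(S)M)\cdot V_{\x(S)-1}$ and $V_1$ bounded by the constant $k^3$-type expression from Theorem \ref{E''}, then checking the closed form dominates it.

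The main obstacle I anticipate is the choice of $z$ and the verification that $i(x,y)$ genuinely factors (coarsely, multiplicatively in the logarithm) through $i(x,z)$, $i(z,y)$, and the smaller-complexity data — in particular controlling the "cross terms," i.e.\ intersections between the part of $x$ and the part of $y$ that lie in different complementary pieces of $z$. Making this precise without the distance formula — presumably via the hands-on surgery estimates of $\S 3$ and an argument that some complementary component must carry a $(1/|\chi(S)|)$-fraction of the total intersection — is the technical heart, and getting the cut-off constant $k$ to survive the descent to $W$ (it may need to be enlarged to $k+\x(S)M$ at each stage, which is exactly what $V(k)$ anticipates) is the delicate point.
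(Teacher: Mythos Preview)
Your proposal has a genuine structural gap. You aim to find a single curve $z$ with $d_S(x,z)$ and $d_S(z,y)$ both bounded; by the triangle inequality this is impossible once $d_S(x,y)$ is large, and your factorization $\log i(x,y)\prec \log i(x,z)+\log i(z,y)+\log i_W(x,y)$ never acquires a clear meaning (if $d_S(x,z)\le 1$ then $i(x,z)$ is already $O(1)$, so the first term carries nothing, and you give no mechanism by which a single subsurface $W$ absorbs the rest). The ``cross terms'' obstacle you flag at the end is not a technicality but the entire problem: one cut $z$ cannot organize all of $\log i(x,y)$, because the intersection can be spread over many incompatible subsurfaces along the way from $x$ to $y$.

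The paper's argument is organized quite differently. It fixes a \emph{tight} geodesic $\{x_i\}$ from $x$ to $y$ and telescopes
\[
\log i(x,y)=\sum_i \bigl(\log i(x_i,y)-\log i(x_{i+1},y)\bigr)+\log i(x_{d_S(x,y)-1},y).
\]
Each increment is bounded (Lemma \ref{tama}) by projecting $y$ into the complementary component $S'$ of $x_{i+1}$ containing $x_i$ and applying the inductive hypothesis on $S'$ to the pairs $(x_i,\pi_{S'}(y))$; Lemma \ref{i} controls the passage from arcs to curves, and Lemma \ref{kp} bounds the number of such curves by $6|\chi(S)|$. This produces a bound involving $d_W(x_{i-1},y)$ for $W\subseteq S-x_i$. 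The second essential ingredient, which your proposal omits entirely, is tightness: Lemma \ref{sss} (a consequence of the Bounded Geodesic Image Theorem plus the tight condition) forces $d_W(x,x_{i-1})\le M$ whenever the $(i-1)$-term survives the cut-off, so one can replace $d_W(x_{i-1},y)$ by $d_W(x,y)$ at the cost of shifting the cut-off by $M$. Finally, each proper $W$ lies in $S-x_i$ for at most three values of $i$ (Lemma \ref{distance3}), so the telescoped sum collapses to the desired sum over all $W\subseteq S$ with a uniform multiplicative loss. The recursion in constants you sketch is roughly right in spirit, but it is this telescoping-along-a-tight-geodesic mechanism, not a single surgery curve, that drives the induction.
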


As an application of the above theorems, we study the intersection numbers of the multicurves which are contained in tight multigeodesics; we will observe a special behavior of tight geodesics under subsurface projections, which is Lemma \ref{sss}, and apply the above theorems. This is a new approach to study the intersection numbers on tight multigeodesics: we obtain the most effective result known so far. For the rest of this paper, given $x,y \in C(S)$, $g_{x,y}$ will denote a geodesic between $x$ and $y$, unless we specify that it is a multigeodesic.

We show

\begin{theorem}\label{ANA}
Suppose $\x(S)\geq1$. Let $x,y\in C(S)$ and $g_{x,y} = \{x_{i}\}$ be a tight multigeodesic such that $d_{S}(x,x_{i}) = i$ for all $i$. We have 
$$ i(x_{i},y) \leq R^{i}\cdot i(x,y) \text{ for all }i,\text{ where }R=\x(S)\cdot  2^{V(M)}.$$
\end{theorem}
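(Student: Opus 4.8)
The plan is to combine the converse inequality (Theorem \ref{E''} / Theorem \ref{E}) with the special behavior of tight geodesics under subsurface projections recorded in Lemma \ref{sss}, and to control the passage from $x_i$ to $x_{i+1}$ one step at a time. First I would observe that $\log i(x_i, y)$ is, up to the multiplicative constant $V(M)$ and additive constant $V(M)$, bounded above by $\Sigma(x_i,y) := \sum_{Z\subseteq S}[d_Z(x_i,y)]_M + \sum_{A\subseteq S}\log[d_A(x_i,y)]_M$, while $\log i(x,y)$ is bounded below in terms of the same kind of sum $\Sigma(x,y)$ with a possibly larger cut-off (here is where one has to be slightly careful: the lower bound in Theorem \ref{E''} needs $k \geq 3M$, but since increasing the cut-off only decreases the sum, bounding $\Sigma$ with cut-off $M$ below by $\Sigma$ with cut-off $3M$ is free). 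So the whole problem reduces to showing that the subsurface-projection sum can only grow by a bounded multiplicative factor when we move from $x$ to $x_i$ along the tight multigeodesic.

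Next, the heart of the argument is the behavior of $d_Z(x_i, y)$ as a function of $i$ for a fixed (non-annular or annular) essential subsurface $Z$. The key point, which Lemma \ref{sss} should provide, is that a tight multigeodesic $\{x_i\}$ has the property that its projection to any proper subsurface $Z$ behaves in a controlled way: the multicurves $x_i$ that actually have nonempty projection to $Z$ form a contiguous block of indices, and outside a bounded-diameter region the projection $\pi_Z(x_i)$ is essentially constant. Concretely, I expect Lemma \ref{sss} to say something like: for each $Z \subsetneq S$ there is at most one index (or a bounded range of indices) where $x_i$ can fail to project to $Z$, and once we are past the relevant part of the geodesic, $d_Z(x_i, y)$ and $d_Z(x,y)$ differ by at most an additive constant of size $O(M)$. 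Summing, $\Sigma(x_i,y) \leq \Sigma(x,y) + (\text{number of relevant subsurfaces}) \cdot O(M)$, and the number of subsurfaces contributing to $\Sigma(x,y)$ is itself controlled by $\log i(x,y)$ (each contributing subsurface forces a definite amount of intersection), so one gets $\Sigma(x_i, y) \prec \Sigma(x,y)$ with controlled constants; feeding this back through Theorem \ref{E} gives $\log i(x_i,y) \leq (\text{const}) \cdot \log i(x,y) + (\text{const})$.

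To get the clean geometric-growth form $i(x_i,y) \leq R^i \cdot i(x,y)$ with $R = \x(S)\cdot 2^{V(M)}$, I would instead run the estimate incrementally: compare $x_{i+1}$ to $x_i$ rather than $x_i$ to $x$. Since $d_S(x_i, x_{i+1}) = 1$, Lemma \ref{basic} in the relevant subsurfaces (or the corresponding cut-off estimates) shows that $\Sigma(x_{i+1}, y)$ and $\Sigma(x_i, y)$ differ by a bounded amount, and more precisely that $\log i(x_{i+1}, y) \leq \log i(x_i, y) + V(M) + \log \x(S)$, where the $\log\x(S)$ term accounts for the fact that $x_{i+1}$ is a multicurve with at most $\x(S)$ components, each of which can be routed past $x_i$ at controlled cost (this is the standard surgery estimate: a curve disjoint from $x_i$ intersects $y$ in at most $\x(S)$ times the intersections forced by $i(x_i,y)$ together with the combinatorics near $x_i$). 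Exponentiating, $i(x_{i+1},y) \leq \x(S) \cdot 2^{V(M)} \cdot i(x_i, y) = R \cdot i(x_i,y)$, and iterating $i$ times gives the claim.

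The main obstacle, I expect, is exactly the incremental step $\log i(x_{i+1},y) \leq \log i(x_i,y) + V(M) + \log\x(S)$: one has to show that passing from $x_i$ to an \emph{adjacent multicurve} $x_{i+1}$ on a \emph{tight} multigeodesic increases every relevant subsurface projection distance $d_Z(\cdot, y)$ by only an additive constant, and this genuinely uses tightness (for an arbitrary edge of $C(S)$ the projection distances can jump arbitrarily in a subsurface disjoint from the edge's common complement). This is where Lemma \ref{sss} must be invoked: it should say that for a tight multigeodesic, the relevant subsurfaces are "seen" by the geodesic in an orderly, nested fashion, so that consecutive vertices disagree in only boundedly many subsurfaces and by only a bounded amount in each. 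Once that is in hand, converting the additive bound on $\Sigma$ into the multiplicative bound on $i$ via Theorems \ref{E''} and \ref{E} is routine bookkeeping with the explicit constants $M$ and $V(M)$.
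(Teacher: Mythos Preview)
Your incremental reduction to the one-step bound $i(x_{i+1},y)\leq R\cdot i(x_i,y)$ is exactly how the paper proceeds, but the mechanism you propose for that step does not work, and it is different from what the paper actually does.

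The gap is in the conversion between $\log i$ and the projection sum $\Sigma$. You want to argue
\[
\log i(x_{i+1},y)\ \stackrel{\text{Thm \ref{E}}}{\leq}\ V(M)\cdot\Sigma(x_{i+1},y)+V(M)
\ \leq\ V(M)\cdot\bigl(\Sigma(x_i,y)+C\bigr)+V(M)
\ \stackrel{\text{lower bd}}{\leq}\ \log i(x_i,y)+C'.
\]
There are two problems. First, the lower bound $\Sigma(x_i,y)\prec\log i(x_i,y)$ is only proved in the paper for $\xi(S)=1$ (Theorem \ref{E''}); for $\xi(S)>1$ the paper establishes only the upper direction (Theorem \ref{E}). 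Second, and more seriously, even if both directions were available, they come with a multiplicative constant $V(M)$ on each side. Chaining them gives $\log i(x_{i+1},y)\leq K\cdot \log i(x_i,y)+C'$ with $K>1$, i.e.\ a polynomial bound $i(x_{i+1},y)\lesssim i(x_i,y)^{K}$, not the linear bound you need. The claim that ``$\Sigma(x_{i+1},y)$ and $\Sigma(x_i,y)$ differ by a bounded amount'' is also suspect: for each individual $Z$ the change is bounded, but the number of contributing subsurfaces is not uniformly bounded, and your way of controlling that number (``itself controlled by $\log i(x,y)$'') reintroduces exactly the unwanted multiplicative dependence.

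The paper avoids all of this by never comparing global sums. Instead it localizes: for $b\in x_{i+1}$, let $S'$ be the component of $S-x_i$ containing $b$, and cut $y$ into arcs $\{a_t\}=y\cap S'$; there are at most $i(x_i,y)$ such arcs. For each arc one applies Theorem \ref{E} \emph{inside $S'$} to the pair $(b,A_t)$ with $A_t\in\pi_{S'}(a_t)$. The point of tightness (via Lemma \ref{sss}/Theorem \ref{BGIT}, using that $\pi_W(x_{i+2})\neq\emptyset$ for every $W\subseteq S'$) is that $[d_W(b,A_t)]_M=0$ for \emph{every} $W\subseteq S'$, so the sum in Theorem \ref{E} is literally zero and only the additive constant survives: $i(b,a_t)\leq i(b,A_t)\leq 2^{V(M)}$. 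Summing over the at most $i(x_i,y)$ arcs and the at most $\xi(S)$ components $b$ of $x_{i+1}$ gives $i(x_{i+1},y)\leq \xi(S)\cdot 2^{V(M)}\cdot i(x_i,y)$ directly. The essential idea you are missing is to drive the projection sum to \emph{zero} in the right subsurface, rather than to compare two large sums.
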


We note that we have a stronger statement in Theorem \ref{ANA} when $\x(S)=1$, see Lemma \ref{2}.

Indeed, we can use Theorem \ref{ANA} to compute the distance between two curves in the curve complex:

\begin{corollary}
There exists an algorithm (based on Theorem \ref{ANA}) which determines the distance between two curves in the curve complex.
\end{corollary}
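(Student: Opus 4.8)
The plan is to convert Theorem \ref{ANA} into a terminating search procedure. Given input curves $x,y \in C(S)$, we first compute $i(x,y)$ (geometric intersection number is computable, e.g.\ via normal coordinates or bigon removal). If $i(x,y)=0$ then $d_S(x,y)\le 1$ and we are done after checking whether $x=y$. Otherwise, by Lemma \ref{basic} we have the a priori upper bound $d_S(x,y)\le 2\log i(x,y)+2 =: D$, so the distance is some integer $d$ with $1\le d\le D$. It therefore suffices to decide, for each $n$ from $1$ upward, whether there exists a geodesic of length $n$ from $x$ to $y$; the first $n$ for which the answer is yes is $d_S(x,y)$.

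The key point is that the search for such a path can be made finite. By the tightness/Bounded Geodesic Image machinery (or directly: every geodesic is a vertex in a tight multigeodesic up to replacing interior multicurves), it is enough to search among tight multigeodesics $\{x_0=x, x_1, \dots, x_n=y\}$ with $d_S(x,x_i)=i$. Theorem \ref{ANA} gives the crucial a priori bound $i(x_i,y)\le R^{i}\cdot i(x,y)$ for every $i$, and by the symmetric statement (applying Theorem \ref{ANA} from the $y$ end) also $i(x_i,x)\le R^{\,n-i}\cdot i(x,y)$; combining, each interior vertex $x_i$ of a length-$n$ geodesic satisfies $i(x_i,x)\le R^{D}\cdot i(x,y)$. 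Thus every candidate interior vertex is a curve whose intersection number with the fixed curve $x$ is bounded by an explicit constant depending only on the input. Since for any fixed curve $x$ and any bound $N$ there are only finitely many isotopy classes of curves $z$ with $i(x,z)\le N$ — and these can be effectively enumerated, e.g.\ by listing normal coordinates with respect to a fixed triangulation/pants decomposition and discarding the inadmissible or duplicated ones — we obtain a finite, explicitly computable set $\mathcal{V}$ containing all vertices that can possibly appear on any geodesic from $x$ to $y$.

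With $\mathcal{V}$ in hand, build the finite graph $\Gamma$ with vertex set $\mathcal{V}$ and an edge between $z,z'$ whenever $i(z,z')$ equals the minimal intersection number for $S$ (all of this is decidable). Then $d_S(x,y)$ is just the graph distance from $x$ to $y$ in $\Gamma$, computed by breadth-first search. Correctness follows because $\Gamma$ is a subgraph of $C(S)$, so $d_\Gamma(x,y)\ge d_S(x,y)$, while the Theorem \ref{ANA} bound guarantees that some genuine geodesic of $C(S)$ has all its vertices in $\mathcal{V}$, giving $d_\Gamma(x,y)\le d_S(x,y)$; hence equality. The main obstacle — and the place where Theorem \ref{ANA} is doing the real work — is precisely the finiteness of $\mathcal{V}$: a priori the interior vertices of a geodesic could be arbitrarily complicated curves, and it is the coarsely-decreasing intersection number along tight geodesics that pins them down to an effectively bounded, enumerable set. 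The remaining ingredients (computing $i(\cdot,\cdot)$, enumerating curves of bounded intersection with a fixed curve, and BFS on a finite graph) are standard and routine, so I would state them as known and concentrate the write-up on assembling the bound $i(x_i,x)\le R^{D}i(x,y)$ and deducing finiteness of the search space.
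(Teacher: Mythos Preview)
The paper does not actually write out a proof of this corollary; it is stated in the introduction and left implicit, the understanding being that Theorem~\ref{ANA} slots into Shackleton's algorithmic framework \cite{SHA} (see the discussion immediately after the corollary). Your outline is exactly that framework, so in spirit you are doing what the paper intends.

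However, there is a genuine gap in your write-up. You assert that ``for any fixed curve $x$ and any bound $N$ there are only finitely many isotopy classes of curves $z$ with $i(x,z)\le N$.'' This is false whenever $\xi(S)\ge 1$: for instance, there are infinitely many curves disjoint from any given nonseparating curve on a surface with $\xi(S)>1$, and even on $S_{1,1}$ the curves $p/q$ with $i(0/1,p/q)=|p|\le N$ are infinite in number (let $q$ vary). Bounding intersection with a \emph{single} curve never yields a finite candidate set.

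The fix is easy and you already have the ingredients. First dispose of $d_S(x,y)\le 2$ directly (check disjointness, then search for a curve disjoint from both; this is decidable). Once $d_S(x,y)\ge 3$, the pair $x,y$ fills $S$ (Lemma~\ref{distance3}), and you have \emph{both} bounds $i(x_i,x)\le R^{D}\,i(x,y)$ and $i(x_i,y)\le R^{D}\,i(x,y)$ from the two applications of Theorem~\ref{ANA}. For a filling pair there are only finitely many isotopy classes of curves with bounded intersection with each member of the pair, and these are effectively enumerable (e.g.\ via normal coordinates relative to the cell decomposition coming from $x\cup y$). With that correction your argument goes through and matches the paper's intended (Shackleton-style) algorithm.
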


We remark that such algorithms have been also found by Birman--Margalit--Menasco \cite{BMM}, Leasure \cite{LEA}, Shackleton \cite{SHA}, and Webb \cite{WEB2}. Here, we omit the details of the comparison on all known algorithms since it has been discussed carefully in the paper of Birman--Margalit--Menasco \cite{BMM}. However, for example, 
\begin{itemize}
\item our algorithm is more effective than Shackleton's algorithm \cite{SHA} since Theorem \ref{ANA} is more effective than Theorem \ref{tabi}, which was proved by Shackleton in \cite{SHA}.
\item our algorithm is more effective than Birman--Margalit--Menasco's algorithm \cite{BMM} when the distance between two curves is ``large'', while their algorithm is more effective than our algorithm when the distance between two curves is ``small''. 
\end{itemize}

\renewcommand{\abstractname}{\textbf{Acknowledgements}}
\begin{abstract}
The author thanks Kenneth Bromberg for useful discussions and continuous feedback throughout this project. The author also thanks Mladen Bestvina for suggesting to prove Theorem \ref{E''} and Theorem \ref{E} and Richard Webb for insightful discussions. Finally, the author thanks Tarik Aougab and Samuel Taylor for useful conversations.

Much of this paper was written while the author was visiting Brown University under the supervision of Jeffrey Brock, the author thanks the hospitality of his and the institute. 
\end{abstract}

\section{Background}
The goal of this section is to establish our basic tools which are subsurface projections and tight geodesics from \cite{MM2}. 

\subsection{Subsurface projections}
We let $R(A)$ be a regular neighborhood of $A$ in $S$ where $A$ is a subset of $S$. Also, we let $\P(C(S))$ and $\P(AC(S))$ be the set of subsets in each complex. 

\subsubsection{Non--annular projections}
Suppose $Z\subseteq S$ is not an annulus. 
We define the map $$i_{Z}:AC(S)\rightarrow \P(AC(Z))$$ such that $i_{Z}(x)$ is the set of arcs or a curve obtained by $x\cap Z$ where $x$ and $\partial Z$ are in minimal position.  

Also we define the map $$p_{Z}:AC(Z)\rightarrow \P(C(Z))$$ such that $p_{Z}(x)=\partial R(x\cup z\cup z')$, where $ z,z' \subseteq \partial Z$ such that $z\cap \partial(x)\neq \emptyset $ and $z'\cap \partial(x)\neq \emptyset $. See Fig. \ref{luuu}. If $x\in C(Z)$ then $p_{Z}(x)=x.$ We observe $|\{p_{Z}(x)\}|\leq 2$. 

The subsurface projection to $Z$ is the map $$\pi_{Z}=p_{Z}\circ i_{Z}:AC(S)\rightarrow \P(C(Z)).$$ If $C\subseteq AC(S)$, we define $\d \pi_{Z}(C)=\bigcup_{c\in C}\pi_{Z}(c).$

\begin{figure}[h]
 \begin{center}
  \includegraphics[width=12cm]{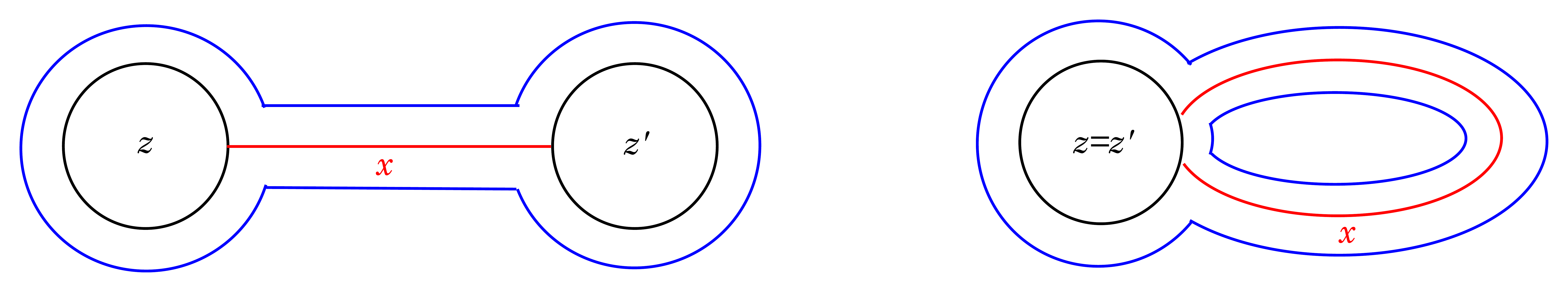}
   \end{center}
    \caption{$p_{Z}(x)=\partial R(x\cup z\cup z')$.}
   \label{luuu}
\end{figure}

Now, we observe the following. 
 
\begin{lemma}\label{kp} 
Suppose $Z\subseteq S$ is not an annulus. If $x\in AC(S)$, then $$|\{i_{Z}(x)\}|\leq 3 |\chi(S)| \text{ and } |\{\pi_{Z}(x)\}|\leq 6|\chi(S)|.$$ 

\end{lemma}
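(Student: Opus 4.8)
The plan is to bound the number of components of $x \cap Z$ by a topological (Euler characteristic) argument, and then propagate that bound through the map $p_Z$, which at worst doubles the count.

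First I would handle the bound on $|\{i_Z(x)\}|$. Put $x$ and $\partial Z$ in minimal position; then $x \cap Z$ is a disjoint union of arcs and at most one closed curve (a single $x$, being a curve or arc in $S$, meets $Z$ in at most one closed component, namely $x$ itself when $x \subseteq Z$; if $x \not\subseteq Z$ the intersection is a union of arcs). The key point is that these arcs are essential and pairwise non-isotopic in $Z$ — this is exactly what minimal position buys us: if two components of $x \cap Z$ were parallel in $Z$, or one were boundary-parallel, one could isotope $x$ to reduce $i(x, \partial Z)$, contradicting minimality. So $\{i_Z(x)\}$ is a simplex in $AC(Z)$, i.e.\ a collection of disjoint, pairwise non-isotopic essential arcs (plus possibly one curve). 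The standard bound on the size of such a collection is linear in $|\chi(Z)|$: a maximal such collection of arcs triangulates $Z$ into a union of pieces with controlled Euler characteristic, giving at most $3|\chi(Z)| + $ (a bounded correction) arcs; since $|\chi(Z)| \le |\chi(S)|$, the claimed bound $|\{i_Z(x)\}| \le 3|\chi(S)|$ follows after checking the small-complexity and sporadic cases by hand.

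Next, the bound on $|\{\pi_Z(x)\}| = |\{p_Z(i_Z(x))\}|$. Since $\pi_Z(x) = \bigcup_{a \in i_Z(x)} p_Z(a)$ and we have already observed $|\{p_Z(a)\}| \le 2$ for each single arc or curve $a$, the triangle-style bound gives
$$|\{\pi_Z(x)\}| \le 2 \cdot |\{i_Z(x)\}| \le 6|\chi(S)|,$$
which is the second inequality. (When $a$ is itself a curve, $p_Z(a) = a$ contributes just one vertex, so the factor $2$ is still a valid upper bound.)

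The main obstacle is the first step, specifically pinning down the exact linear constant $3$ in $|\{i_Z(x)\}| \le 3|\chi(S)|$ uniformly over all subsurfaces $Z$, including the sporadic surfaces $S_{1,1}$ and $S_{0,4}$ and small-complexity $Z$ where the generic triangulation count can fail or needs adjustment; I would dispatch these by direct inspection. Everything else — minimal position implies essential and non-parallel components, and the doubling through $p_Z$ — is routine once that combinatorial bound on simplices of $A(Z)$ is in hand.
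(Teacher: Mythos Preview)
Your approach matches the paper's: bound $|\{i_Z(x)\}|$ by the maximal simplex size $\dim AC(Z)+1 \le 3|\chi(Z)| \le 3|\chi(S)|$ (the paper simply cites this Euler-characteristic bound, referring to \cite{KP}), then double through $p_Z$ for the second inequality.

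One step of your justification is wrong, though. You assert that the components of $x\cap Z$ are pairwise non-isotopic because ``if two components of $x\cap Z$ were parallel in $Z$ \dots\ one could isotope $x$ to reduce $i(x,\partial Z)$.'' This is false: the bigon criterion rules out \emph{inessential} arcs, not parallel essential ones. Two parallel arcs of $x\cap Z$ bound a rectangle in $Z$, but $x$ continues into $S\setminus Z$ from the four corners, and there is in general no isotopy of $x$ in $S$ that removes those intersections. Concretely, take $Z=S_{0,3}\subset S$ with $\partial Z\cap \mathrm{int}(S)$ a single curve $c$; there are only two isotopy classes of essential arcs in $Z$ with both endpoints on $c$, so any curve $x$ in minimal position with $i(x,c)\ge 6$ already forces parallel arcs in $x\cap Z$.

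The fix is simply to drop this claim: it is not needed. By definition $i_Z(x)$ is a subset of $AC(Z)$, i.e.\ a set of \emph{isotopy classes}; since the underlying arcs are disjoint and essential, these classes span a simplex, and the dimension bound applies directly. So your argument goes through, and agrees with the paper's, once the spurious non-isotopy assertion is removed.
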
 
\begin{proof}
The first inequality follows from the fact that the dimension of $AC(Z)$ is bounded by $3 |\chi(Z)|-1\leq 3 |\chi(S)|-1,$ which can be proved by an Euler characteristic argument; for instance, see \cite{KP}. 

For the second inequality, we observe that if $y\in AC(Z)$ then $|\{p_{Z}(y)\}|\leq 2.$
\end{proof}

\subsubsection{Annular projections}
Suppose $Z\subseteq S$ is an annulus. Fix a hyperbolic metric on S, compactify the cover of $S$ which corresponds to $\pi_{1}(Z)$ with its Gromov boundary, and denote the resulting surface by $S^{Z}$. Here, we define the curve complex of annuli by altering the original definition; we define the vertices of $C(Z)$ to be the isotopy classes of arcs whose endpoints lie on two boundaries of $S^{Z}$, here the isotopy is relative to $\partial S^{Z} $ \emph{pointwise}. Two vertices of $C(Z)$ are distance one apart if they can be isotoped to be disjoint in the interior of $S^{Z}$, again the isotopy is relative to $\partial S^{Z} $ \emph{pointwise}.

The subsurface projection to $Z$ is the map $$\pi_{Z}:AC(S)\rightarrow \P(C(Z))$$ such that $\pi_{Z}(x)$ is the set of all arcs obtained by the lift of $x$. As in the previous case, if $C\subseteq AC(S)$, we define $\d \pi_{Z}(C)=\bigcup_{c\in C}\pi_{Z}(c).$

\subsubsection{Subsurface projection distances}
Suppose $A,B \subseteq AC(S)$; we define $\d d_{Z}(A,B)$ as the diameter of $\pi_{Z}(A) \cup \pi_{Z}(B)$ in $C(Z).$

We recall important results regarding subsurface projection distances. 

\begin{lemma}[\cite{MM2}]\label{oct}
Suppose $\x(S)\geq1$. If $x,y\in C(S)$ such that $d_{S}(x,y)=1$, then $d_{Z}(x,y)\leq 3$ for all $Z\subseteq S$.
\end{lemma}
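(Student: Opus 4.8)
The plan is to treat non-annular $Z$ and annular $Z$ separately, using in both cases that $d_{S}(x,y)=1$ forces $x$ and $y$ to be (nearly) disjoint in $S$, and then controlling how much a subsurface projection can amplify a small intersection. Recall that $d_{S}(x,y)=1$ means $i(x,y)=0$ when $\x(S)\geq 2$ and $i(x,y)\leq 2$ when $\x(S)=1$ (i.e. $S=S_{1,1}$ or $S=S_{0,4}$). Since $d_{Z}(x,y)$ is the diameter of $\pi_{Z}(x)\cup\pi_{Z}(y)$ in $C(Z)$, and the case $Z=S$ is immediate, I may assume $Z\subsetneq S$ is a proper essential subsurface; I realize $x$, $y$ and $\partial Z$ simultaneously in minimal position, and I may assume $\pi_{Z}(x),\pi_{Z}(y)\neq\emptyset$, since otherwise $d_{Z}(x,y)$ is the diameter of a single family of pairwise disjoint curves, hence at most $1$.

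\textbf{Non-annular $Z$.} A proper essential non-annular subsurface of $S$ forces $\x(S)\geq 2$, so $x\cap y=\emptyset$ and $i_{Z}(x)\cup i_{Z}(y)$ is a disjoint family of properly embedded arcs and curves in $Z$. For a component $a$ of this family, every curve of $p_{Z}(a)=\partial R(a\cup z\cup z')$ can be pushed into an arbitrarily small regular neighborhood of $a$ together with the at most two components $z,z'$ of $\partial Z$ that $a$ meets. Shrinking these neighborhoods and giving disjoint nested collars to any component of $\partial Z$ met by several arcs, one arranges the curves of $\pi_{Z}(x)\cup\pi_{Z}(y)$ so that any two of them are either disjoint or are regular-neighborhood boundaries sharing one or two components of $\partial Z$; a short inspection of the latter case shows such a pair is within distance $2$ in $C(Z)$, so $d_{Z}(x,y)\leq 2$ here.

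\textbf{Annular $Z$.} Realize $Z$ as a regular neighborhood of a curve $\gamma$ and pass to the compactified annular cover $S^{Z}$ with core $c$. Since $x$ and $y$ are simple, their full preimages in $S^{Z}$ are embedded $1$--manifolds, so the arcs comprising $\pi_{Z}(x)$ are pairwise disjoint, as are those comprising $\pi_{Z}(y)$; hence $\pi_{Z}(x)$ and $\pi_{Z}(y)$ each have diameter at most $1$ in $C(Z)$. If $\x(S)\geq 2$ then $x\cap y=\emptyset$, so every lift of $x$ is disjoint from every lift of $y$ and $d_{Z}(x,y)\leq 1$. If $\x(S)=1$, then for a lift $\tilde x$ of $x$ and a lift $\tilde y$ of $y$ crossing $c$, each point of $\tilde x\cap\tilde y$ covers a point of $x\cap y$; a covering-space argument (two such points over the same point of $S$ would, after translation along $c$, give two distinct lifts of $x$ through a common point of $S^{Z}$, which is impossible unless $x$ or $y$ equals $\gamma$) bounds $i(\tilde x,\tilde y)$ by a small explicit constant, and since $d_{Z}(\alpha,\beta)\leq i(\alpha,\beta)+1$ in the annular complex this gives $d_{Z}(\tilde x,\tilde y)\leq 3$. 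Together with the diameter bounds on $\pi_{Z}(x)$ and $\pi_{Z}(y)$, we get $d_{Z}(x,y)\leq 3$.

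The delicate point — and essentially the only place the constant $3$ rather than something smaller is needed — is the annular case with $\x(S)=1$: the covering-space count in $S^{Z}$ must be carried out carefully, ruling out coincidences that arise when $\gamma$, $x$ and $y$ lie in special position, and then one must use the precise comparison between distance and intersection number in the annular curve complex. The remaining cases reduce to bookkeeping of which components of $\partial Z$ meet the arcs of $x\cap Z$ and $y\cap Z$.
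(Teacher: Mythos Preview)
The paper does not prove this lemma; it is quoted from \cite{MM2} (this is Masur--Minsky's Lipschitz property of subsurface projections, their Lemma~2.3) and used as a black box, so there is no in-paper argument to compare against.

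Your outline follows the standard Masur--Minsky argument and is correct in structure, but two places are too loose as written. In the non-annular case, ``a short inspection'' is doing real work: the clean way to see that two disjoint arcs $a,b$ in $Z$ have $p_{Z}(a),p_{Z}(b)$ within distance~$2$ is to observe that every component of $p_{Z}(a)\cup p_{Z}(b)$ is isotopic into $\partial R(a\cup b\cup\partial Z)$, so all these curves are simultaneously disjoint; your description in terms of shrinking collars is heading toward this but does not quite say it. In the annular case with $\x(S)=1$, your final bookkeeping does not add up: you bound $i(\tilde x,\tilde y)$ by ``a small explicit constant'' and then invoke the diameter-$1$ bounds on $\pi_{Z}(x)$ and $\pi_{Z}(y)$ to conclude $\leq 3$, but stringing those together via the triangle inequality would give a larger number. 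What actually works is that your covering argument (two preimages of the same point of $x\cap y$ force a deck translate of $\tilde x$ to coincide with $\tilde x$) bounds $i(\tilde x,\tilde y)\leq i(x,y)\leq 2$ for \emph{every} pair of core-crossing lifts simultaneously, so every cross pair already has distance $\leq i(\tilde x,\tilde y)+1\leq 3$ in $C(Z)$ and no further triangle inequality is needed. With these two points tightened, your proof is the standard one.
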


Now, we observe the following lemma for annular projections.

\begin{lemma}[\cite{MM2}]\label{min}
Suppose $Z$ is an annulus in $S$ and the core curve of $Z$ is $x \in C(S)$. Let $T_{x}$ be the Dehn twist along $x$. If $y\in C(S)$ such that $\pi_{Z}(y)\neq \emptyset$, then $$d_{Z}(y,T_{x}^{n}(y))=|n|+2 \text{ for }n\neq0.$$

If $y$ intersects $x$ exactly twice with opposite orientation, then the half twist about $x$ of $y$ produces a curve $H_{x}(y)$, defined by taking $x\cup y$ and resolving the intersections in a way consistent with the orientation: $H_{x}^{2}(y)=T_{x}(y)$, and $$\displaystyle d_{Z}(y,H_{x}^{n}(y))=\bigg\lfloor \frac{|n|}{2} \bigg\rfloor +2\text{ for }n\neq0.$$
\end{lemma}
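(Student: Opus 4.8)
The whole statement is local to the annular curve complex $C(Z)$, which is a quasi-isometric copy of $\mathbb{Z}$, so my plan is to turn both assertions into explicit counts of intersection points of lifted arcs inside the compact annulus $S^{Z}$. First I would recall the standard description of $C(Z)$ from \cite{MM2}: if $\alpha,\beta$ are spanning arcs of $S^{Z}$ realized in minimal position relative to their (pointwise fixed) endpoints on $\partial S^{Z}$, then $d_{Z}(\alpha,\beta)=1+\iota(\alpha,\beta)$, where $\iota(\alpha,\beta)$ is the number of interior intersection points; in particular two distinct vertices span an edge exactly when they are disjoint in the interior, and $d_{Z}$ is monotone in $\iota$. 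With this in hand the lemma reduces to two intersection counts, namely $\iota\big(\pi_{Z}(y),\pi_{Z}(T_{x}^{n}(y))\big)=|n|+1$ and $\iota\big(\pi_{Z}(y),\pi_{Z}(H_{x}^{n}(y))\big)=\lfloor |n|/2\rfloor +1$, together with the easy remark that adjoining the remaining components of $\pi_{Z}(y)$ and of the twisted projection does not affect the diameter, since these projections have uniformly bounded diameter in $C(Z)$.

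For the Dehn twist, let $\widetilde{T}_{x}$ be the lift of $T_{x}$ to $S^{Z}$: it fixes $\partial S^{Z}$ pointwise and is supported near the core $\hat{x}$, where it performs $n$ full twists about $\hat{x}$, so that (up to isotopy) $\pi_{Z}(T_{x}^{n}(y))=\widetilde{T}_{x}^{\,n}\big(\pi_{Z}(y)\big)$. I would fix a spanning arc $\alpha\in\pi_{Z}(y)$ and count the interior intersections of $\alpha$ with $\widetilde{T}_{x}^{\,n}(\alpha)$: passing to the universal cover $\mathbb{R}\times[0,1]$ of $S^{Z}$, the two arcs differ by $n$ in their twisting coordinate, and a direct picture shows that, because their four endpoints on $\partial S^{Z}$ are distinct and cyclically linked, exactly one extra transverse crossing is forced near each end; hence $\iota(\alpha,\widetilde{T}_{x}^{\,n}(\alpha))=|n|+1$ for $n\ne 0$, which gives $d_{Z}(y,T_{x}^{n}(y))=|n|+2$. (When $n=0$ the two endpoints coincide, the forced crossings disappear, and the count is $0$; this is why that case is excluded.)

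For the half twist, the hypothesis that $y$ meets $x$ twice with opposite orientation forces $\pi_{Z}(y)$ to be a pair of spanning arcs $\{\alpha,\alpha'\}$ at bounded distance, and $H_{x}$ lifts to $\widetilde{H}_{x}$ with $\widetilde{H}_{x}^{\,2}=\widetilde{T}_{x}$: it performs ``half a twist'' about $\hat{x}$ and interchanges the two arcs. When $n$ is even this reduces to the previous computation with $n/2$ in place of $n$, giving $|n/2|+2=\lfloor |n|/2\rfloor+2$; when $n$ is odd, the half‑integer shift combined with the interchange means that the extremal pair among $\pi_{Z}(y)\cup\pi_{Z}(H_{x}^{n}(y))$ has twisting‑coordinate difference $\lfloor |n|/2\rfloor$ rather than $\lceil |n|/2\rceil$, and again one extra crossing is forced at each end, so $d_{Z}(y,H_{x}^{n}(y))=\lfloor |n|/2\rfloor+2$.

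The main obstacle is precisely the bookkeeping of this additive constant: one must keep careful track of how the lifted (half‑)twist sits with respect to the core $\hat{x}$ and, above all, of the cyclic order of the four endpoints on $\partial S^{Z}$, since it is that ordering which produces ``$+2$'' rather than ``$+1$'' or ``$+0$'', and in the half‑twist case it is the interplay of this ordering with the parity of $n$ that produces the floor. Everything else is a routine, if fiddly, count of arcs in an annulus.
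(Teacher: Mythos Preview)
The paper does not prove this lemma; it is quoted from \cite{MM2} without argument, so there is no proof here to compare against. Your overall strategy---reduce to the formula $d_{Z}(\alpha,\beta)=1+\iota(\alpha,\beta)$ for arcs in the annular complex and then count interior crossings of a lift with its image under the lifted twist---is the standard one.

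That said, your sketch contains an internal inconsistency that would have to be repaired before it becomes a proof. You first assert that the lift $\widetilde{T}_{x}$ ``fixes $\partial S^{Z}$ pointwise and is supported near the core $\hat{x}$'', but a few lines later you crucially use that $\alpha$ and $\widetilde{T}_{x}^{\,n}(\alpha)$ have ``four endpoints on $\partial S^{Z}$ \dots\ distinct and cyclically linked''. These two claims are incompatible: if $\widetilde{T}_{x}$ fixed the ideal boundary pointwise then $\alpha$ and $\widetilde{T}_{x}^{\,n}(\alpha)$ would share both endpoints, the minimal interior intersection number would be $|n|-1$, and you would obtain $d_{Z}=|n|$ rather than $|n|+2$. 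In fact it is the first assertion that fails: the preimage in $S^{Z}$ of the supporting annulus $N(x)$ is not just a compact neighbourhood of $\hat{x}$ but also contains infinitely many simply connected strips accumulating on $\partial\overline{S^{Z}}$, and the induced action on the ideal boundary is nontrivial (for instance on $S_{1,1}$ with $\pi_{1}=\langle a,b\rangle$ and $x=a$, the attracting fixed points of $b$ and of $(T_{a})_{*}b=ba$ lie in different $\langle a\rangle$--orbits on $\partial\mathbb{H}^{2}$). So the endpoints really are distinct, but not for the reason you give; the ``one extra crossing at each end'' that produces the $+2$, and the parity analysis giving the floor in the half--twist case, require a correct description of how $\widetilde{T}_{x}$ moves the ideal boundary, which your setup does not yet provide.
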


Lastly, we recall the Bounded Geodesic Image Theorem which was first proved by Masur--Minsky \cite{MM2} and recently by Webb \cite{WEB1} by a more direct approach.  

\begin{theorem}[Bounded Geodesic Image Theorem]\label{BGIT}
Suppose $\x(S)\geq 1$. There exists $M$ such that the following holds. If $\{x_{i}\}_{0}^{n}$ is a (multi)geodesic in $C(S)$ and $\pi_{Z}(x_{i})\neq \emptyset$ for all $i$ where $Z\subsetneq S$, then $$d_{Z} (x_{0},x_{n}) \leq M.$$
\end{theorem}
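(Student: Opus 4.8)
The plan is to prove the contrapositive: if $\{x_{i}\}_{0}^{n}$ is a geodesic of $C(S)$ and $d_{Z}(x_{0},x_{n})$ exceeds a suitable universal constant, then some vertex $x_{j}$ is disjoint from $Z$, i.e.\ $\pi_{Z}(x_{j})=\emptyset$. The naive estimate $d_{Z}(x_{0},x_{n})\le\sum_{i}d_{Z}(x_{i},x_{i+1})\le 3n$ obtained from Lemma \ref{oct} is useless, since $n=d_{S}(x_{0},x_{n})$ is unbounded; the real point is that \emph{moving} a large amount in $C(Z)$ through curves that all cut $Z$ is expensive, whereas a geodesic of $C(S)$ is by definition the cheapest route, so it must take the ``shortcut'' through $\partial Z$.

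Concretely I would argue as follows. Suppose $d_{Z}(x_{0},x_{n})$ is large; then $\pi_{Z}(x_{0})$ and $\pi_{Z}(x_{n})$ are at distance $\ge 3$ in $C(Z)$, so the arcs $x_{0}\cap Z$ and $x_{n}\cap Z$ fill $Z$. For each $i$ with $x_{i}$ cutting $Z$, choose a component $a_{i}$ of $x_{i}\cap Z$; since $x_{i}$ is disjoint from $x_{i+1}$ in $S$, the arcs $a_{i}$ and $a_{i+1}$ are disjoint in $Z$, so $\{\pi_{Z}(x_{i})\}$ is a coarse path in $C(Z)$ that moves at bounded speed, and since its endpoints are far apart this path -- hence the sub-interval of indices $i$ for which $x_{i}$ cuts $Z$ -- is long. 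At this stage I would invoke one of the two standard mechanisms. The first, of Masur--Minsky \cite{MM2} (in the spirit of Rafi \cite{RAF}), realizes $x_{0}$ and $x_{n}$ as systoles on the two ends of a Teichm\"uller geodesic $\mathcal{L}$, observes that large $d_{Z}(x_{0},x_{n})$ forces $\partial Z$ to become very short along a long subsegment of $\mathcal{L}$ (for $Z$ an annulus this is precisely the twisting behaviour recorded in Lemma \ref{min}), so that the systoles over that subsegment are disjoint from $Z$, and then uses that the $C(S)$--geodesic $\{x_{i}\}$ coarsely fellow--travels those systoles to locate a vertex $x_{j}$ that is disjoint from $Z$. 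The second, of Webb \cite{WEB1}, constructs the required curve disjoint from $Z$ directly, by a unicorn--path surgery carried out inside $Z$; it avoids Teichm\"uller theory and yields an $M$ that does not depend on the topological type of $S$.

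The main obstacle is exactly this last dichotomy: turning ``the $C(Z)$--projection is large'' into ``some vertex misses $Z$'' requires either the quantitative relation between subsurface projections and the thin part of Teichm\"uller space, or a careful surgery argument in $Z$; and extracting a bound $M$ that is \emph{uniform} in $S$ requires in addition the uniform hyperbolicity constant of Hensel--Przytycki--Webb (respectively, Webb's direct proof). Since for the purposes of this paper only the existence of such an $M$ is needed, and we have fixed the admissible value $M=200$, we do not reproduce the argument and refer the reader to \cite{MM2} and \cite{WEB1}.
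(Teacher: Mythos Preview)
The paper does not prove this theorem at all: it is stated as background, attributed to Masur--Minsky \cite{MM2} and Webb \cite{WEB1}, and the value $M=200$ is simply quoted from \cite{WEB1}. Your proposal ends in exactly the same place---referring the reader to \cite{MM2} and \cite{WEB1}---so it matches the paper's treatment; the informal sketch you give of the two standard mechanisms is accurate extra context but is not something the paper itself supplies.
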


In the rest of this paper, we mean $M$ as $M$ in the statement of the Bounded Geodesic Image Theorem. We remark that $M$ is computable, and also uniform for all surfaces. In particular, we take $M=200$ in this paper. See \cite{WEB1}.

\subsection{Tight geodesics}

A \emph{multicurve} is a set of curves that form a simplex in the curve complex. 
Let $V$ and $W$ be multicurves in $S$; we say $V$ and $W$ \emph{fill} $S$ if there is no curve in the complementary components of $V \cup W$ in $S$. Take $R(V \cup W)$ and fill in a disk for every complementary component of $R(V \cup W)$ in $S$ which is a disk, then $V$ and $W$ fill this subsurface. We denote this subsurface by $F(V,W)$. We observe
 
\begin{lemma}\label{distance3}
Suppose $\x(S)>1$. Let $V$ and $W$ be multicurves in $S$. Then, $V$ and $W$ fill $S$ if and only if $ d_{S}(V,W)>2$.
\end{lemma}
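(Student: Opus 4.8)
The plan is to prove the two implications of the biconditional separately; one direction is a short triangle-inequality argument, and the other carries the real content.

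\emph{If $V$ and $W$ do not fill $S$, then $d_S(V,W)\le 2$.} By the definition of filling there is an essential curve $z$ lying in a complementary component of $V\cup W$, so $i(z,v)=0=i(z,w)$ for every $v\in V$ and every $w\in W$. Since $\x(S)>1$, disjoint curves are at distance at most $1$ in $C(S)$, so $d_S(z,v)\le 1$ and $d_S(z,w)\le 1$, and the triangle inequality gives $d_S(v,w)\le 2$ for all such $v,w$; moreover $d_S(v,v')\le 1$ and $d_S(w,w')\le 1$ since $V$ and $W$ are simplices of $C(S)$. Hence every pair of vertices of $V\cup W$ is within distance $2$, that is, $d_S(V,W)=\mathrm{diam}_{C(S)}(V\cup W)\le 2$. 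This is the implication ``$d_S(V,W)>2\Rightarrow V,W$ fill $S$''.

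\emph{If $d_S(V,W)\le 2$, then $V$ and $W$ do not fill $S$.} Here I would exhibit an essential curve disjoint from all of $V\cup W$, which then lies in a complementary component of $V\cup W$. If $\mathrm{diam}_{C(S)}(V\cup W)\le 1$ then $V\cup W$ is itself a multicurve, which never fills a surface with $\x(S)>1$ (a complementary component of complexity $\x\ge 1$ contains an essential curve of $S$, and a pair-of-pants complementary component contains a curve parallel to one of the curves of the multicurve, essential in $S$). So assume $\mathrm{diam}_{C(S)}(V\cup W)=2$. My approach would be to cut $S$ along the multicurve $V$, obtaining a subsurface $S_V$ of strictly smaller complexity whose boundary components are parallel copies of the curves of $V$, and to examine the system $\Omega$ of arcs and curves that $W$ cuts out on $S_V$. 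If $\Omega$ does not fill $S_V$, then some component of $S_V$ carries an essential curve $c$ disjoint from $\Omega$; since $c$ lies in the interior of $S_V$ it is disjoint from $V$, and since it is disjoint from $\Omega$ it is disjoint from every $w\in W$, so $c$ is disjoint from $V\cup W$ and we are done. If $\Omega$ does fill $S_V$, one should deduce from $\mathrm{diam}_{C(S)}(V\cup W)\le 2$ that some component $v_0$ of $V$ is disjoint from all of $W$, and then take $c$ to be a parallel copy of $v_0$.

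I expect the main obstacle to be precisely the step that turns the pairwise bound $d_S(v,w)\le 2$ (for all $v\in V$, $w\in W$) into the dichotomy used above --- namely that either $W$ leaves room for a disjoint curve in $S\setminus V$, or some curve of $V$ misses $W$ entirely. The delicate point is that being disjoint from a single component of $V$ does not place a curve inside $S_V$, so the hypotheses for the various $v\in V$ must be used simultaneously; one must also manage the boundary surgery built into $\pi_{S_V}$ and the degenerate situation in which $S_V$ has no essential curves of its own (for instance when $V$ is a pants decomposition). A tidier organization that sidesteps much of this is to work throughout with the filled subsurface $F(V,W)$ --- proving that $F(V,W)=S$ is equivalent to $d_S(V,W)\ge 3$ --- and to run an induction on $\x(S)$ in which one curve of $V$ is peeled off at a time, with the classical curve case $|V|=|W|=1$ (two curves fill $S$ if and only if $d_S\ge 3$) as the base.
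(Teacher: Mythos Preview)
The paper does not prove this lemma; it is stated as an observation and no argument is supplied.

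Your proof of the implication $d_S(V,W)>2\Rightarrow V,W$ fill $S$ (equivalently: if $V,W$ do not fill then $\operatorname{diam}_{C(S)}(V\cup W)\le 2$) is correct and is exactly the expected elementary argument.

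Your difficulty with the converse is well founded: as stated for arbitrary multicurves, the implication ``$V,W$ fill $S\Rightarrow d_S(V,W)>2$'' is \emph{false}. On $S_{0,6}$ (label the boundary components $1,\dots,6$ cyclically) take the two pants decompositions
\[
V=\bigl\{\gamma_{12},\,\gamma_{34},\,\gamma_{56}\bigr\},\qquad
W=\bigl\{\gamma_{23},\,\gamma_{45},\,\gamma_{61}\bigr\},
\]
where $\gamma_{ij}$ bounds a pair of pants containing exactly the boundaries $i$ and $j$. Any essential curve in $S_{0,6}$ must separate some cyclically adjacent pair $k,k{+}1$ and hence meets the corresponding $\gamma\in V\cup W$; so $V$ and $W$ fill. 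Yet for every $v\in V$ and $w\in W$ one checks $d_S(v,w)\le 2$ (for instance $\gamma_{56}$ is disjoint from both $\gamma_{12}$ and $\gamma_{23}$), so $d_S(V,W)=\operatorname{diam}_{C(S)}(V\cup W)=2$. Thus the step in your second paragraph --- deducing from $\operatorname{diam}_{C(S)}(V\cup W)\le 2$ alone that some $v_0\in V$ misses all of $W$ --- cannot succeed in general; in the example above every curve of $V$ meets two curves of $W$.

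This does not affect the paper. Every invocation of the lemma (in the proofs of Lemma~\ref{sss}, Corollary~\ref{last}, and Theorem~\ref{ANA}) uses only the direction you proved, namely that $d_S(V,W)>2$ forces $V$ and $W$ to fill; this is what guarantees that a subsurface missing one vertex of a multigeodesic is hit by all far--away vertices. For \emph{single} curves the biconditional does hold --- ``$v,w$ fill'' is exactly ``no curve is disjoint from both'', i.e.\ $d_S(v,w)\ge 3$ --- and that elementary case together with the direction you established is all the paper actually needs. So your instinct that the second implication hides a real obstacle was correct; the resolution is not a missing argument but a missing hypothesis.
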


Now, we define tight geodesics.
\begin{definition}\label{d}
Suppose $\x(S)>1.$
\begin{itemize}
\item A multigeodesic is a sequence of multicurves $\{V_{i}\}$ such that $d_{S}(a,b)=|p-q|$ for all $a\in V_{p},b\in V_{q}$, and $p\neq q$. 

\item A tight multigeodesic is a multigeodesic $\{V_{i}\}$ such that $V_{i}=\partial F(V_{i-1},V_{i+1})$ for all $i$. 

\item Let $x,y \in C(S)$. A tight geodesic between $x$ and $y$ is a geodesic $\{x_{i}\}$ such that $x_{i}\in V_{i}$ for all $i$, where $\{V_{i}\}$ is a tight multigeodesic between $x$ and $y$.

\end{itemize}
\end{definition}

Masur--Minsky showed 

\begin{theorem}[\cite{MM2}]
There exists a tight geodesic between any two points in $C(S)$.
\end{theorem}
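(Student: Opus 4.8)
The plan is to prove the stronger statement that there is a tight \emph{multi}geodesic $\{V_i\}$ between $x$ and $y$ (with $V_0=\{x\}$, $V_n=\{y\}$, $n=d_S(x,y)$); this suffices, since choosing one vertex $x_i\in V_i$ from each $V_i$ yields a sequence with $d_S(x_i,x_j)=|i-j|$ by the multigeodesic condition, hence a geodesic, which is tight by Definition \ref{d}. I would argue by induction on $n$. For $n\leq 1$ the sequence $\{x\},\{y\}$ is a tight multigeodesic since there is no interior index. For $n=2$, Lemma \ref{distance3} says $x$ and $y$ do not fill $S$, so $F(x,y)\subsetneq S$ and $V_1:=\partial F(x,y)$ is a nonempty multicurve; each component of $V_1$ is essential, disjoint from $x\cup y$, and distinct from $x$ and $y$, hence at distance $1$ from each, so $\{x\},V_1,\{y\}$ is a multigeodesic, tight because its only interior term is $V_1=\partial F(V_0,V_2)$ by construction.

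For the inductive step, $n\geq 3$, I would not build the multigeodesic directly but \emph{tighten} an arbitrary one. Start from any geodesic $x=a_0,a_1,\dots,a_n=y$ and set $V_i:=\{a_i\}$. Repeatedly choose an interior index $i$ with $V_i\neq\partial F(V_{i-1},V_{i+1})$ and replace $V_i$ by $\partial F(V_{i-1},V_{i+1})$. Each such move preserves the multigeodesic property: since $d_S(V_{i-1},V_{i+1})=2$, Lemma \ref{distance3} gives $F(V_{i-1},V_{i+1})\subsetneq S$, so the new $V_i=\partial F(V_{i-1},V_{i+1})$ is a nonempty multicurve whose components are essential, disjoint from $V_{i-1}\cup V_{i+1}$, and distinct from all curves of $V_{i-1}\cup V_{i+1}$; hence the new $V_i$ lies at distance $1$ from $V_{i-1}$ and from $V_{i+1}$, and since $d_S(x,y)=n$ pins the total, the triangle inequality forces every other pairwise distance to equal the index difference. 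A configuration admitting no such move is, by definition, a tight multigeodesic between $x$ and $y$, so it only remains to show that the procedure terminates.

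The main obstacle is precisely this termination. The natural mechanism is a monotonicity lemma: a tightening move can only enlarge each of the filling subsurfaces $F(V_{i-1},V_{i+1})$ (up to isotopy), so the integer $\sum_i |\chi(F(V_{i-1},V_{i+1}))|$ is nondecreasing; being bounded above by $n\cdot|\chi(S)|$, it stabilizes, and once it does the subsurfaces $F(V_{i-1},V_{i+1})$ --- and hence the multicurves $V_i=\partial F(V_{i-1},V_{i+1})$ --- can no longer change, i.e.\ the sequence is tight. I would isolate and prove the monotonicity of $F(\cdot,\cdot)$ under a tightening move as a separate surface-topological lemma, analyzing the complementary regions of $V_{i-1}\cup V_{i+1}$ and how they sit relative to the updated neighbors; the remainder is bookkeeping with the triangle inequality and Lemma \ref{distance3}. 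I note that the seemingly simpler route of inducting on $n$ by splicing a tight multigeodesic for the distance-$(n-1)$ pair $(z,y)$ onto a first multicurve $V_1=\partial F(x,V_2)$ fails for the same underlying reason: the tightness equation at the splice index $i=2$ need not hold, because replacing the first multicurve alters which subsurface it fills together with $V_3$.
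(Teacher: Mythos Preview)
The paper does not prove this theorem; it merely quotes it from Masur--Minsky \cite{MM2} as background, so there is no ``paper's proof'' to compare against. Your tightening strategy is, in outline, exactly Masur--Minsky's: start from an arbitrary geodesic and repeatedly replace $V_i$ by $\partial F(V_{i-1},V_{i+1})$; your verification that such a move preserves the multigeodesic property via the triangle inequality is correct.

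The genuine gap is termination. Your proposed potential $\sum_i |\chi(F(V_{i-1},V_{i+1}))|$ is not clearly monotone under an arbitrary tightening move. A move at index $j$ leaves $F(V_{j-1},V_{j+1})$ untouched (it does not involve $V_j$); what changes are $F(V_{j-2},V_j)$ and $F(V_j,V_{j+2})$, and there is no general reason the new $V_j=\partial F(V_{j-1},V_{j+1})$ fills \emph{more} with $V_{j\pm 2}$ than the old $V_j$ did. For instance, the old $V_j$ may lie deep inside a complementary component of $F(V_{j-1},V_{j+1})$ and fill a large subsurface together with $V_{j-2}$, while the new $V_j$ is only the boundary multicurve of that region and fills much less with $V_{j-2}$. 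So the ``separate surface-topological lemma'' you propose to isolate is likely false as stated.

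Masur--Minsky's actual termination argument is different: they tighten \emph{sequentially}, replacing $V_1$, then $V_2$, and so on, and prove the local preservation lemma that if $V_i=\partial F(V_{i-1},V_{i+1})$ already holds and one then replaces $V_{i+1}$ by $\partial F(V_i,V_{i+2})$, the tightness at index $i$ is not destroyed. With that lemma a single left-to-right sweep produces a tight multigeodesic in finitely many steps, and no global potential is needed. If you want to complete your argument along the lines you sketched, that preservation lemma is the correct statement to isolate and prove.
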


Lastly, we observe the following lemma, which states a special behavior of tight geodesics under subsurface projections: 
\begin{lemma}[\cite{W2}]\label{sss}
Suppose $\x(S)\geq 1$ and $Z\subsetneq S$. Let $x,y\in C(S)$ and $\{V_{j}\}$ be a tight (multi)geodesic between $x$ and $y$ such that $d_{S}(x,V_{j})=j$ for all $j$. If $\pi_{Z}(V_{i})\neq \emptyset$, then $$d_{Z}(x,V_{i})\leq M \text{ or } d_{Z}(V_{i},y)\leq M.$$

\end{lemma}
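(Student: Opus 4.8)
The statement: given a tight (multi)geodesic $\{V_j\}$ from $x$ to $y$ with $d_S(x,V_j)=j$, and a proper subsurface $Z$ with $\pi_Z(V_i)\neq\emptyset$, we want $d_Z(x,V_i)\leq M$ or $d_Z(V_i,y)\leq M$. Let me think about how to prove this.

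The natural tool is the Bounded Geodesic Image Theorem (Theorem \ref{BGIT}): if a geodesic segment has all vertices projecting nontrivially to $Z$, then the endpoints are $M$-close in $C(Z)$. So the strategy should be: look at the two sub-geodesics $\{V_0=x, V_1, \dots, V_i\}$ and $\{V_i, V_{i+1}, \dots, V_n=y\}$. If all of $V_0, \dots, V_i$ project nontrivially to $Z$, then BGIT gives $d_Z(x,V_i)\leq M$, done. Similarly for the other side. So the only problematic case is when BOTH sub-geodesics contain some vertex that projects trivially to $Z$ (i.e., is disjoint from, or equals, $\partial Z$ — some $V_a$ with $a<i$ disjoint from $\partial Z$, and some $V_b$ with $b>i$ disjoint from $\partial Z$).

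So suppose $V_a$ (with $a < i$) has $\pi_Z(V_a)=\emptyset$ and $V_b$ (with $b>i$) has $\pi_Z(V_b)=\emptyset$. "Projects trivially" means every curve in the multicurve $V_a$ either is the core of $Z$ or is disjoint from $Z$... wait, more precisely $\pi_Z(V_a)=\emptyset$ means no component of $V_a$ intersects $Z$ essentially. Now here's where **tightness** must come in. We have $V_i = \partial F(V_{i-1}, V_{i+1})$, and more generally tightness propagates. The key fact I'd want: if $V_a$ and $V_b$ are both disjoint from $\partial Z$ (don't cross $Z$) with $a < i < b$, then by a "nesting"/tightness argument the intermediate $V_i$ must also be disjoint from $\partial Z$, contradicting $\pi_Z(V_i)\neq\emptyset$.

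**The plan.** First, observe that $\pi_Z(V_j)=\emptyset$ iff no component of $V_j$ crosses $\partial Z$ (equivalently $V_j$ can be isotoped off $Z$, allowing the core of $Z$ as a component — but for a core-curve case one handles the annular projection separately; actually if the core of $Z$ is among the $V_j$ then that $V_j$ is on the geodesic and we argue directly). Then the main step: I claim if $V_a, V_b$ (with $a<i<b$) are each disjoint from $\partial Z$, then $F(V_{i-1},V_{i+1})$ — which determines $V_i$ — is itself disjoint from $\partial Z$. The idea is that the curve $\partial Z$ (or rather a curve carried by its complement structure) would give a curve disjoint from both $V_a$ and $V_b$; but more carefully, I want to use an inductive argument on tight geodesics: tightness implies that $V_i \subseteq \partial F(V_a, V_b)$ in an appropriate sense (this is a standard consequence — tightness is "local" but composes to give that $V_i$ fills with respect to anything filling the pair $V_a,V_b$). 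Since $\partial Z$ is disjoint from both $V_a$ and $V_b$, it is disjoint from (or nested in) $F(V_a,V_b)$, hence disjoint from $\partial F(V_a,V_b) \supseteq$ the relevant part containing $V_i$, giving $\pi_Z(V_i)=\emptyset$ — contradiction. So at least one of the two sub-geodesics has all vertices projecting nontrivially, and BGIT finishes.

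**Main obstacle.** The delicate point is making precise and proving the "tightness composes" claim: that for a tight multigeodesic, if $a < i < b$ then every curve of $V_i$ is a component of $\partial F(V_a, V_b)$, or at least that $V_i$ cannot be disjoint-able from a curve that both $V_a$ and $V_b$ can be disjoint-ed from. One has to handle the inductive step carefully because $F(V_{i-1},V_{i+1})$ being defined via regular neighborhoods and filling-in disks, and one must control how these subsurfaces nest as one widens the window $[a,b]$. This is where I'd lean on the cited reference \cite{W2}; the cleanest route is probably a direct argument: if $\gamma = \partial Z$ is disjoint from $V_a$ and from $V_b$ with $a<i<b$, then consider $d_\gamma$-type reasoning is circular, so instead argue geometrically that $\gamma$ can be realized disjointly from $V_{i}$ using that $\gamma$ lies in the complement of $V_a \cup V_b$ and tightness forces $V_i$ into that same complementary region (this is essentially the statement that tight geodesics "stay in" $F(x,y)$ and more refined nested versions). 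The annular/core-curve edge case (when $\partial Z$ or the core of $Z$ appears as some $V_j$) should be disposed of separately and is easy since then that $V_j$ is on the geodesic and the two halves of the geodesic project nontrivially wherever they're defined. I would structure the proof as: (1) reduce to the case $\pi_Z(V_a)=\pi_Z(V_b)=\emptyset$ with $a<i<b$; (2) prove the nesting lemma for tight geodesics; (3) derive the contradiction; (4) conclude via BGIT.
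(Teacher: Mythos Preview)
Your overall framework is correct: split the geodesic at $V_i$, apply Theorem~\ref{BGIT} to whichever half projects entirely to $Z$, and argue that it cannot happen that both halves contain a vertex missing $Z$. The gap is in how you resolve that last case.

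Your proposed ``nesting lemma'' --- that tightness composes to give $V_i \subseteq \partial F(V_a,V_b)$ for arbitrary $a<i<b$ --- is false as stated. Indeed, once $b-a\geq 3$ the multicurves $V_a$ and $V_b$ fill $S$ (this is Lemma~\ref{distance3}), so $F(V_a,V_b)=S$ and $\partial F(V_a,V_b)$ is empty (or $\partial S$); there is nothing for $V_i$ to be contained in. So the inductive ``widening the window'' picture you sketch does not hold, and leaning on the reference for it would not help.

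What actually saves you is much more elementary, and it is exactly what the paper does. If $\pi_Z(V_a)=\pi_Z(V_b)=\emptyset$ then $\partial Z$ is disjoint from both $V_a$ and $V_b$, hence $d_S(V_a,V_b)\leq 2$. Since $a<i<b$ this forces $a=i-1$ and $b=i+1$. Now a \emph{single} application of the local tightness condition $V_i=\partial F(V_{i-1},V_{i+1})$ does the job: since $\pi_Z(V_i)\neq\emptyset$, the subsurface $Z$ must essentially intersect $F(V_{i-1},V_{i+1})$; but $V_{i-1}$ and $V_{i+1}$ fill that subsurface, so they cannot both miss $Z$, contradiction. The paper runs this same argument (not phrased as a contradiction, but as ``if some $V_k$ with $k>i$ misses $Z$, then every $V_j$ with $j\leq i$ hits $Z$''), using Lemma~\ref{distance3} for the vertices with $k-j>2$ and tightness only for the single adjacent pair. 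In short: replace your global nesting claim with Lemma~\ref{distance3}, and your proof becomes the paper's.
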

\begin{proof}
We assume $\x(S)>1$ since this case requires more work. Also, we assume $\{V_{j}\}$ is a tight multigeodesic. The proof applies to the case when $\{V_{j}\}$ is a tight geodesic. 

Suppose $\pi_{Z}(V_{j}) \neq \emptyset$ for all $j>i$. By Theorem \ref{BGIT}, we have $d_{Z}(V_{i},y)\leq M.$

Suppose $\pi_{Z}(V_{k})= \emptyset$ for some $k>i$. We have two cases.
\newline

\underline{If $k>i+1$}: By Lemma \ref{distance3}, we observe $\pi_{Z}(V_{j})\neq \emptyset$ for all $j< i$ since $d_{S}(V_{k},V_{j})>2$, i.e., $V_{k}$ and $V_{j}$ fill $S$. By Theorem \ref{BGIT}, we have $d_{Z}(x,V_{i})\leq M.$
\newline

\underline{If $k=i+1$}: By tightness, $V_{i}=\partial F(V_{i-1},V_{i+1})$; therefore, $Z$ must essentially intersect with $F(V_{i-1},V_{i+1})$ so that $\pi_{Z}(V_{i})\neq \emptyset$. Furthermore, we observe that $V_{i-1}$ and $V_{i+1}$ fill $F(V_{i-1},V_{i+1})$; therefore, if $\pi_{Z}(V_{i+1})= \emptyset$ then $\pi_{Z}(V_{i-1})\neq \emptyset.$ As in the previous case, we have $d_{Z}(x,V_{i})\leq M $ by Lemma \ref{distance3} and Theorem \ref{BGIT}. 

\end{proof}

\section{A Farey graph via the Bounded Geodesic Image Theorem}
Recall that the $1$--skeletons of the curve complexes of $S_{1,1}$ and $S_{0,4}$ are both Farey graphs; the vertices are identified with $\mathbb{Q}\cup \{\frac{1}{0}=\infty\}\subset S^{1}$. 

The following observation is elementary, yet useful in this section.

\begin{lemma}
Suppose $\x(S)=1$. Let $x,y\in C(S)$ such that $d_{S}(x,y)=1$. If $I$ and $I'$ are the two (open) intervals obtained by removing $\{x,y\}$ from $S^{1}$, then any geodesic between a curve in $I$ and a curve in $I'$ needs to contain $x$ or $y$.  
\end{lemma}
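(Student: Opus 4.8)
The plan is to exploit the planarity of the Farey graph. Recall that for $\x(S)=1$ the complex $C(S)$ is the Farey graph, drawn in the closed disk bounded by $S^{1}$ with vertex set $\mathbb{Q}\cup\{\infty\}\subset S^{1}$ and with each edge realized as the chord (equivalently, the hyperbolic geodesic) joining its two endpoints. The single structural fact we need is that two distinct such chords never cross transversally in the interior of the disk; this is standard, and follows directly from the adjacency criterion $|ps-qr|=1$ for $p/q,r/s$, or, in curve--complex terms, from the fact that two curves at distance $1$ in $S_{1,1}$ (resp. $S_{0,4}$) meet once (resp. twice) in minimal position. Since $d_{S}(x,y)=1$, the pair $\{x,y\}$ spans an edge $e$ of $C(S)$, drawn as a chord with endpoints $x$ and $y$, and $e$ separates the disk into two components whose closures contain $I$ and $I'$ respectively.

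First I would fix the cyclic order on $S^{1}$: listing points around the circle we encounter $x$, then all of $I$, then $y$, then all of $I'$, and back to $x$. In particular every vertex $v\notin\{x,y\}$ lies in exactly one of $I$, $I'$; write $\sigma(v)\in\{I,I'\}$ for the one containing it.

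The heart of the argument is the following claim: if $v$ and $w$ are adjacent vertices of $C(S)$ with $v,w\notin\{x,y\}$, then $\sigma(v)=\sigma(w)$. Indeed, if instead $\sigma(v)=I$ and $\sigma(w)=I'$, then the four points appear in the cyclic order $x,v,y,w$ along $S^{1}$, so the chords $vw$ and $xy=e$ link one another and must intersect in the interior of the disk — contradicting the non--crossing property of Farey edges. Granting the claim, I would finish by propagating along a path: given $a\in I$, $b\in I'$ and any edge--path $a=v_{0},v_{1},\dots,v_{n}=b$ in $C(S)$ (in particular any geodesic), suppose no $v_{j}$ equals $x$ or $y$; then the claim applies to each consecutive pair and forces $\sigma(v_{0})=\sigma(v_{1})=\dots=\sigma(v_{n})$, contradicting $\sigma(v_{0})=I\neq I'=\sigma(v_{n})$. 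Hence some $v_{j}\in\{x,y\}$, i.e. the path contains $x$ or $y$.

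I do not expect a genuine obstacle here: the only point that deserves care is the invocation of the non--crossing of Farey edges (which one may either cite as part of the Farey--graph picture already recalled at the start of the section, or re--derive in one line from the intersection--number description of adjacency), and after that the path argument is entirely routine. If one prefers a slicker phrasing, the same reasoning shows that $\{x,y\}$ is a vertex cut of $C(S)$ separating the vertices of $I$ from those of $I'$, which is exactly the assertion.
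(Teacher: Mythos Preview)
Your argument is correct and is essentially the same as the paper's: both rest on the single fact that distinct edges of the Farey graph have disjoint interiors, from which the separation statement follows immediately. The paper records this in one sentence, while you spell out the path--propagation step explicitly, but there is no substantive difference in approach.
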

\begin{proof}
Since $d_{S}(x,y)=1$, there exists the edge between $x$ and $y$. The statement follows from the fact that the interiors of any two distinct edges of a Farey graph are disjoint. 
\end{proof}

We will use the above lemma throughout this section. The goal of this section is to observe Theorem \ref{4}. First, we prove Lemma \ref{2} and Lemma \ref{ru}.

\begin{lemma}\label{2}
Suppose $\x(S)=1$. Let $x,y\in C(S)$ and $g_{x,y}=\{x_{i}\}$ such that $d_{S}(x,x_{i})=i$ for all $i$. Then, for all $0<i<d_{S}(x,y)$, we have $$ \frac{i(x_{i-1},y)}{i(x_{i},y)}>\frac{3}{2}.$$
\end{lemma}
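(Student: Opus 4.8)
The plan is to work entirely inside the Farey graph $S^1 = \mathbb{Q} \cup \{\infty\}$ and argue by induction on $i$, tracking how the intersection number with $y$ decreases as we move one step along the geodesic toward $x$. The key geometric input is the structure of Farey edges: for any edge $\{x_i, x_{i+1}\}$ the circle $S^1$ splits into two open arcs, and by the elementary lemma just proved, since $d_S(x_{i-1}, x_{i+1}) \geq 2$ the vertex $x_{i-1}$ lies in one of these arcs while $y$ (together with all of $x_0, \dots, x_{i-2}$) lies in the other; otherwise a geodesic from $x_{i-1}$ to $y$ could avoid both $x_i$ and $x_{i+1}$, contradicting that $\{x_j\}$ is a geodesic of length $d_S(x,y) > i$. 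So the triple $x_{i-1}, x_i, x_{i+1}$ forms a Farey triangle-like configuration separating $x_{i-1}$ from $y$.

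First I would set up coordinates: normalize so that $x_i = \infty = \frac{1}{0}$, $x_{i+1} = \frac{0}{1}$ (they span an edge), and then $x_{i-1} = \frac{1}{1}$ after an element of $\mathrm{PSL}(2,\mathbb{Z})$ — this is the standard Farey triangle on $\infty, 0, 1$. Then $y = \frac{p}{q}$ lies in the arc on the opposite side of the edge $\{\infty, 0\}$ from $1$, which forces $p/q < 0$ (or more precisely $y$ lies in the arc from $0$ through $-1$ to $\infty$), so we may write $y = \frac{-p}{q}$ with $p, q > 0$ and $\gcd(p,q) = 1$. Intersection numbers in the Farey graph are given by the standard formula $i\!\left(\frac{a}{b}, \frac{c}{d}\right) = |ad - bc|$ (up to the fixed factor depending on whether $S = S_{1,1}$ or $S = S_{0,4}$, which cancels in the ratio). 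Thus $i(x_{i-1}, y) = i\!\left(\frac{1}{1}, \frac{-p}{q}\right) = |q + p| = p + q$ and $i(x_i, y) = i\!\left(\frac{1}{0}, \frac{-p}{q}\right) = q$. The desired inequality becomes $\frac{p+q}{q} > \frac{3}{2}$, i.e. $p > \frac{q}{2}$, i.e. $2p > q$.

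The remaining point — and the main obstacle — is to show $2p > q$, i.e. to rule out $y$ being "too close" to $x_{i+1} = 0$ relative to $x_{i-1} = 1$. This is where I must use that $x_{i+1}$ is genuinely the \emph{next} vertex on a \emph{geodesic} from $x_{i-1}$ toward $y$: because $d_S(x_{i-1}, y) = d_S(x_{i-1}, x_{i+1}) + d_S(x_{i+1}, y)$ and $d_S(x_{i-1}, x_{i+1}) = 2$, the vertex $x_{i+1} = 0$ must lie on a geodesic from $x_{i-1} = 1$ to $y$. In Farey-graph terms, the neighbors of $\infty = x_i$ on the arc-side containing $y$ are exactly the integers, and among candidates for the vertex at distance $2$ from $x_{i-1}=1$ that make progress toward $y = \frac{-p}{q}$, the geodesic condition pins down which integer $0$ can be — essentially it forces $y$ to lie in the Farey triangle $\{\infty, 0, -1\}$ or requires that no closer neighbor of $1$ does the job, which translates to the bound $2p > q$ (one checks that if $2p \le q$ then $\frac{-1}{1}$ or another vertex would give a shorter route, contradicting geodesity, or contradicting the separation lemma at the previous step). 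I would verify this by a short case analysis on the continued-fraction / Stern–Brocot position of $y$ relative to the triangle on $\{\infty, 0, 1\}$; this is the only step requiring genuine care, and I expect it to come down to the observation that the two Farey neighbors of the edge $\{\infty, 0\}$ are $1$ and $-1$, so $y$ sits in the triangle $\{\infty, 0, -1\}$, giving $y = \frac{-p}{q}$ with $0 < p \le q$ from that triangle's parametrization and $p \ge 1$, whence $p + q \ge q + 1$ and in fact the geodesic constraint upgrades this to $2p > q$.
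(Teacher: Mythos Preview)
Your normalization is impossible, and this breaks the argument. You set $x_i = \infty$, $x_{i+1} = 0$, and then claim $x_{i-1} = 1$ ``after an element of $\mathrm{PSL}(2,\mathbb{Z})$.'' But $\{0,1,\infty\}$ is a genuine Farey \emph{triangle}: all three pairwise distances are $1$. In particular this would force $d_S(x_{i-1},x_{i+1}) = d_S(1,0) = 1$, contradicting the geodesic property $d_S(x_{i-1},x_{i+1}) = 2$. Once you fix the edge $\{x_i,x_{i+1}\} = \{\infty,0\}$, the only freedom left in $\mathrm{PGL}(2,\mathbb{Z})$ is $z \mapsto \pm z$, so $x_{i-1}$ can be normalized to a positive integer $n$ but not to $1$ in general. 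Every subsequent computation (the formula $i(x_{i-1},y) = p+q$, the reduction to $2p > q$) depends on $x_{i-1} = 1$, so the whole calculation collapses. The closing paragraph then compounds this by leaving the key inequality $2p > q$ as a hand-wave (``one checks\ldots'', ``I expect it to come down to\ldots'').

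The paper's proof avoids this trap by normalizing only \emph{two} vertices, $x_{i-1} = 0$ and $x_i = \infty$, and never invoking $x_{i+1}$ at all. The ratio $i(x_{i-1},y)/i(x_i,y)$ then equals $|y|$, and a short case analysis ($y \le 1$ versus $1 < y \le \tfrac{3}{2}$) using the separation lemma on the edges $\{0,1\}$, $\{1,2\}$, $\{1,\tfrac{3}{2}\}$ shows that $|y| \le \tfrac{3}{2}$ would give $d_S(x_{i-1},y) \le d_S(x_i,y)$, contradicting geodesity. If you want to salvage your approach you would need to carry the unknown integer $n = x_{i-1}$ through the computation and prove the corresponding inequality for general $n$; at that point you are essentially redoing the paper's argument with extra baggage.
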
 

\begin{proof}
We recall that if $\frac{s}{t},\frac{p}{q}\in C(S)$, then $i \big(\frac{s}{t},\frac{p}{q} \big)=k\cdot |sq-pt|$ where $k=1,2$ if $S=S_{1,1},S_{0,4}$ respectively.

We may assume $x_{i-1}=\frac{0}{1}$ and $x_{i}=\frac{1}{0}$. Let $y=\frac{p}{q}$; we have 
\begin{itemize}
\item $i (x_{i-1},y ) =i \big(\frac{0}{1},\frac{p}{q} \big)= k\cdot|p|.$
\item $i (x_{i},y ) =i \big(\frac{1}{0},\frac{p}{q} \big)= k\cdot|q|.$
\end{itemize}
Thus, we have $$|y|= \frac{|p|}{|q|}=  \frac{k\cdot |p|}{k\cdot |q|}= \frac{i(x_{i-1},y)}{i(x_{i},y)}.$$ Therefore, it suffices to prove $|y|>\frac{3}{2}.$

\emph{Proof.}
We suppose $|y|\leq \frac{3}{2}$, and derive a contradiction. Assume $y>0$, the same argument works for the case $y<0$. 
\newline

\underline{Assume $y\leq 1$}:
Since there exists the edge between $x_{i-1}=0$ and $1$, $g_{x_{i},y}$ needs to contain $1$, but since $d_{S}(x_{i-1}, 1)=1$, our assumption implies $d_{S}(x_{i-1},y)\leq d_{S}(x_{i},y), \text{ a contradiction}.$
\newline

\underline{Assume $1<y\leq \frac{3}{2}$}: 
There exists the edge between $1$ and $2$. Therefore, $g_{x_{i},y}$ needs to contain $2$ since $g_{x_{i},y}$ does not contain $1$. Furthermore, we notice that there exists the edge between $\frac{3}{2}$ and $1$, so $g_{x_{i},y}$ needs to contain $\frac{3}{2}$. However, since $d_{S}(x_{i-1},\frac{3}{2})=2$ as $x_{i-1}=0, 1, \frac{3}{2}$ is a geodesic, our assumption implies $d_{S}(x_{i-1},y)\leq d_{S}(x_{i},y),\text{ a contradiction}.$
\end{proof}

We show 

\begin{lemma}\label{ru}
Suppose $\x(S)=1$. Let $x,y\in C(S)$ and $g_{x,y}=\{x_{i}\}$ such that $d_{S}(x,x_{i})=i$ for all $i$. If $d_{R(x_{i})}(x,y)=L$, then, for all $0<i<d_{S}(x,y)$, we have $$L-2M \leq d_{R(x_{i})}(x_{i-1},\lfloor y \rfloor)\leq L+2M$$ $$\text{ or }$$ $$ L-2M \leq d_{R(x_{i})}(x_{i-1},\lceil y \rceil)\leq L+2M,$$ where $x_{i}=\frac{1}{0}$.
\end{lemma}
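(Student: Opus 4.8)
The plan is to work entirely in the Farey graph picture for $S$ with $\x(S)=1$, exploiting the explicit continued-fraction description of annular projections implicit in Lemma \ref{min}. Set $Z = R(x_i)$ and recall $x_i = \frac{1}{0}$. The annular projection $\pi_Z$ of a curve $\frac{p}{q}$ records, roughly, the ``integer part'' data of how $\frac{p}{q}$ wraps around $x_i$; in Farey coordinates with $x_i$ placed at $\infty$, the two curves adjacent to $x_i$ along any geodesic approaching $y$ are $\lfloor y \rfloor$ and $\lceil y \rceil$ (the two integers bracketing $y \in \mathbb{R}$), and $d_Z(x,y) = L$ essentially measures the twisting of $x$ about $x_i$ relative to $y$. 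The key structural input is that $\{x_{i-1}, x_i\}$ is an edge of the Farey graph, so $x_{i-1}$ is one of the vertices of a triangle on $x_i$, and since $x_{i-1}, x_i, \dots, y$ is a geodesic, the geodesic $g_{x_i, y}$ leaves $x_i$ through a vertex on the opposite side of $x_{i-1}$; that forced vertex is $\lfloor y \rfloor$ or $\lceil y \rceil$.

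First I would normalize: after applying the mapping class group we may take $x_i = \frac{1}{0} = \infty$ and $x_{i-1} = \frac{0}{1} = 0$, so that $y = \frac{p}{q}$ with $|y| = \frac{|p|}{|q|}$, and (by Lemma \ref{2} applied with the roles arranged, or directly) $y$ is not an integer when $0 < i < d_S(x,y)$, so $\lfloor y \rfloor$ and $\lceil y \rceil$ are genuinely distinct and both nonzero or straddle $0$. Second, I would identify which of $\lfloor y \rfloor, \lceil y \rceil$ appears as the next vertex $x_{i+1}'$ on \emph{some} geodesic from $x_i$ to $y$ — call that integer $m$ — using the triangle-side argument above together with the earlier Lemma about intervals $I, I'$ separated by an edge. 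Third, since $x_{i-1} = 0$ and $m$ both lie on geodesics through $x_i$ to curves ``near'' $y$, and $\pi_Z(\cdot)$ only sees curves crossing $x_i$, I would compare $d_Z(x, y)$ with $d_Z(x_{i-1}, m)$: both $x_{i-1}$ and $m$ are at distance $1$ from $x_i = \infty$ hence project to $\pi_Z$, and any geodesic from $x$ to $y$ passing through $x_{i-1}$ resp.\ from $m$ to $y$ has all its vertices projecting to $Z$ once we are on the $x$-side (here I invoke Lemma \ref{sss} / the Bounded Geodesic Image Theorem to control the pieces that do project). The triangle inequality for $d_Z$ then gives $|d_Z(x,y) - d_Z(x_{i-1}, m)| \le 2M$, which is exactly the claimed estimate with $m \in \{\lfloor y \rfloor, \lceil y \rceil\}$.

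More concretely for the middle step: a geodesic in the Farey graph from $\infty = x_i$ to $y$ together with the prescribed entry vertex $x_{i-1} = 0$ on the other side forces, by the interval lemma and the non-crossing of Farey edges, that the geodesic continues to one of the two integers nearest $y$; the Bounded Geodesic Image Theorem (Theorem \ref{BGIT}) bounds the $Z$-projection of the subgeodesic from $x$ to $x_{i-1}$ and from $m$ to $y$ each by $M$, provided every vertex on those subgeodesics projects nontrivially to $Z$ — and a vertex fails to project to $Z = R(x_i)$ only if it equals $x_i$ or is disjoint from $x_i$, which cannot happen strictly inside a geodesic that has $x_i$ as an endpoint of the relevant subsegment. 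Assembling: $d_Z(x, y) \le d_Z(x, x_{i-1}) + d_Z(x_{i-1}, m) + d_Z(m, y) \le d_Z(x_{i-1}, m) + 2M$, and symmetrically $d_Z(x_{i-1}, m) \le d_Z(x, y) + 2M$, giving $L - 2M \le d_Z(x_{i-1}, m) \le L + 2M$.

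The main obstacle I anticipate is \textbf{pinning down which integer} — $\lfloor y \rfloor$ or $\lceil y \rceil$ — actually serves as the bridging vertex $m$, and making the ``geodesic leaves $x_i$ through a specific triangle vertex'' argument airtight; in the Farey graph a vertex can lie on many geodesics, so one must argue that \emph{for at least one choice} of $m \in \{\lfloor y\rfloor, \lceil y\rceil\}$ there is a geodesic $x_{i-1} \to x_i \to m \to \cdots \to y$, which is where the ``or'' in the statement comes from. A secondary subtlety is verifying that every intermediate vertex on the relevant subsegments crosses $x_i$ so that BGIT applies; this needs the observation that a Farey geodesic with endpoint $x_i$ has all \emph{other} vertices adjacent-or-farther from $x_i$ but still intersecting it (distance $\ge 1$ in the curve complex means intersecting, and only $x_i$ itself has empty $Z$-projection), together with handling the at-most-two-element ambiguity in $\pi_Z$ via the uniform bound $M = 200$.
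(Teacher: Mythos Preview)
Your approach is essentially the paper's: bound $d_Z(x,x_{i-1})\le M$ by applying Theorem~\ref{BGIT} to the initial segment $x,\dots,x_{i-1}$ of $g_{x,y}$ (none of whose vertices is $x_i$, hence all project to $Z$), bound $d_Z(m,y)\le M$ for some $m\in\{\lfloor y\rfloor,\lceil y\rceil\}$ by applying Theorem~\ref{BGIT} to the tail of a geodesic $g_{x_i,y}$ after it passes through $m$, and conclude by the triangle inequality. The paper organizes this as two separate sub-inequalities ($L-M\le d_Z(x_{i-1},y)\le L+M$ and $d_Z(y,m)\le M$) but the content is identical.

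Two small corrections: (i) your claim that $y$ is never an integer for $0<i<d_S(x,y)$ fails when $i=d_S(x,y)-1$, since then $d_S(x_i,y)=1$ forces $y\in\mathbb{Z}$; the paper handles this case separately (trivially, $y=\lfloor y\rfloor$ or $y=\lceil y\rceil$, so $d_Z(y,m)=0$). (ii) You do not need Lemma~\ref{sss} here---plain BGIT suffices, since in the Farey graph the only curve with empty $Z$-projection is $x_i$ itself, and neither subsegment contains $x_i$.
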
 

\begin{proof}
To prove the statement, it suffices to show the following.
\begin{itemize}
\item $L-M \leq d_{R(x_{i})}(x_{i-1},y)\leq L+M.$
\item $d_{R(x_{i})}(y,\lfloor y \rfloor)\leq M \text{ or } d_{R(x_{i})}(y,\lceil y \rceil)\leq M.$
\end{itemize}


\underline{First inequality}:
Since $\pi_{R(x_{i})}(x_{j}) \neq \emptyset$ for all $j<i$, we have $d_{R(x_{i})}(x,x_{i-1})\leq M$ by Theorem \ref{BGIT}. Therefore, we have
\begin{itemize}
\item $d_{R(x_{i})}(x_{i-1},y) \geq d_{R(x_{i})}(x,y)- d_{R(x_{i})}(x,x_{i-1})\geq L-M .$
\item $d_{R(x_{i})}(x_{i-1},y) \leq d_{R(x_{i})}(x_{i-1},x)+ d_{R(x_{i})}(x,y) \leq M+L.$
\end{itemize}

\underline{Second inequality}: Recall that we assumed $x_{i}=\frac{1}{0}.$ There are two cases.

If $i=d_{S}(x,y)-1$, then since the set of all vertices which are distance $1$ apart from $x_{i}$ is $\mathbb{Z}$, $$y=\lfloor y \rfloor \text{ or } y=\lceil y \rceil. \Longrightarrow d_{R(x_{i})}(y,\lfloor y \rfloor)=0 \text{ or } d_{R(x_{i})}(y,\lceil y \rceil)=0.$$

If $i<d_{S}(x,y)-1$, then $y\neq \lfloor y \rfloor$ and $y\neq \lceil y \rceil$. Consider the two intervals obtained by removing $\{\lceil y \rceil ,\lfloor y\rfloor\}$ from $S^{1}$. Let $I_{y}$ be one of those intervals which contains $y$. See Fig. \ref{lu}. Since there exists the edge between $\lceil y \rceil$ and $\lfloor y\rfloor$, $g_{x_{i},y}$ needs to contain $\lfloor y \rfloor$ or $\lceil y \rceil$. This implies $$x_{i}\notin g_{\lfloor y \rfloor,y} \text{ or }x_{i}\notin g_{\lceil y \rceil ,y}. \stackrel{\text{Theorem \ref{BGIT}}}{ \Longrightarrow} d_{R(x_{i})}(y,\lfloor y \rfloor)\leq M \text{ or } d_{R(x_{i})}(y,\lceil y \rceil)\leq M.$$

\begin{figure}[h]
 \begin{center}
  \includegraphics[width=6cm,height =5cm]{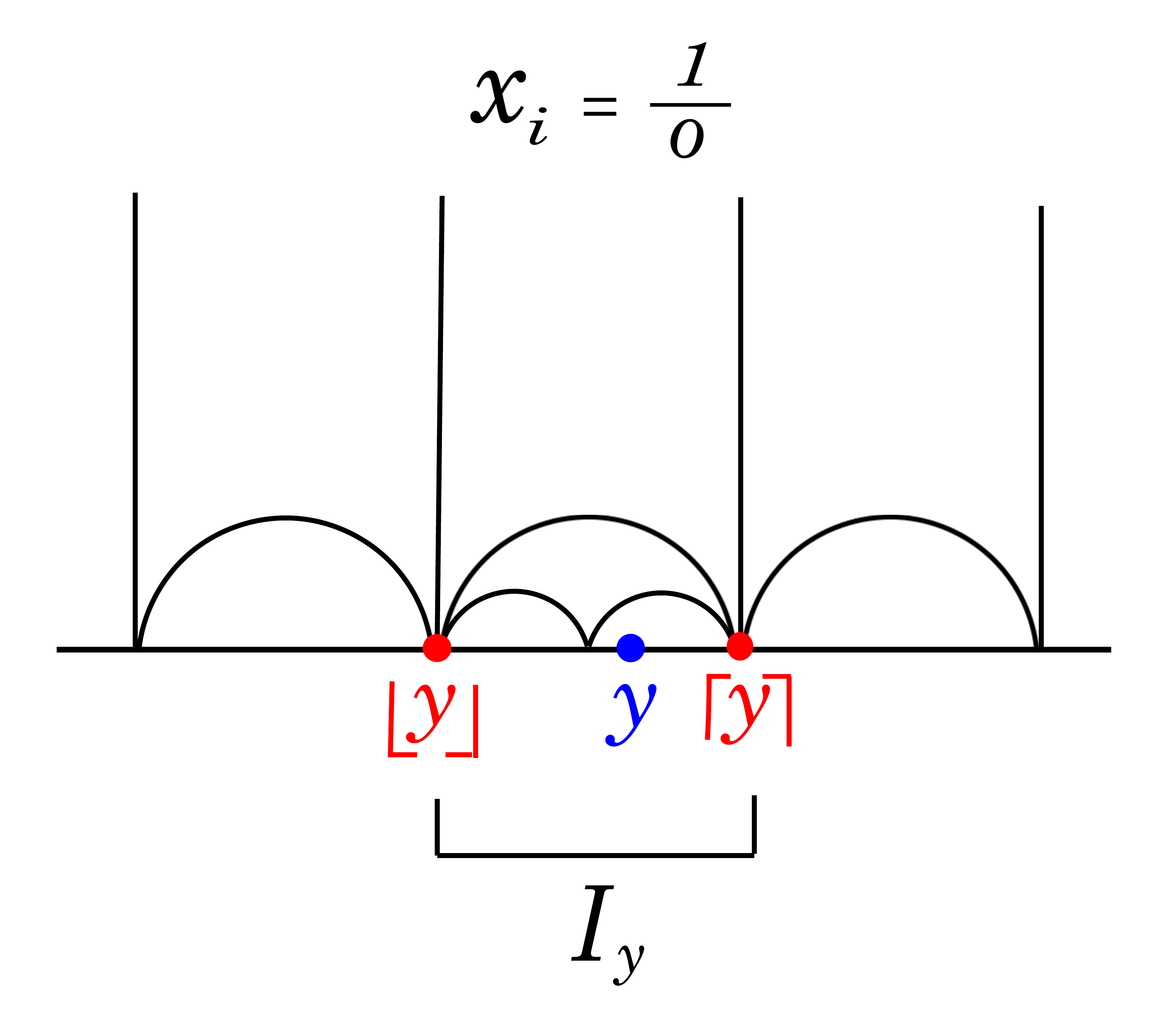}
   \end{center}
    \caption{When $i<d_{S}(x,y)-1$, we have $x_{i}\notin g_{\lfloor y \rfloor,y} \text{ or }x_{i}\notin g_{\lceil y \rceil ,y}$.}
   \label{lu}
\end{figure}

\end{proof}

We have the following key theorem.

\begin{theorem}\label{4}
Suppose $\x(S)=1$. Let $x,y\in C(S)$ and $g_{x,y}=\{x_{i}\}$ such that $d_{S}(x,x_{i})=i$ for all $i$. If $d_{R(x_{i})}(x,y)=L$, then, for all $0<i<d_{S}(x,y)$, we have $$L-2M-3 \leq\frac{i(x_{i-1}, y)}{i(x_{i}, y)}  \leq 2(L+2M).$$ 
\end{theorem}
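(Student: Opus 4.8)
The plan is to relate the ratio $i(x_{i-1},y)/i(x_{i},y)$ to the annular projection distance $L=d_{R(x_i)}(x,y)$ by passing through the nearby curve $x_{i-1}$ and one of the two integer points $\lfloor y\rfloor$ or $\lceil y\rceil$ that are adjacent to $x_i=\frac{1}{0}$ in the Farey graph. The two tools I would combine are Lemma~\ref{ru} (which says $L-2M\le d_{R(x_i)}(x_{i-1},\lfloor y\rfloor)\le L+2M$ or the same with $\lceil y\rceil$) and Lemma~\ref{min} (the twist/half-twist distance formula in an annulus), together with the explicit Farey-graph intersection formula $i\big(\frac{s}{t},\frac{p}{q}\big)=k\,|sq-pt|$ already recalled in the proof of Lemma~\ref{2}.

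First I would fix coordinates: set $x_i=\frac{1}{0}$ and $x_{i-1}=\frac{0}{1}$ (these are Farey-adjacent), and write $y=\frac{p}{q}$ in lowest terms. Then $i(x_{i-1},y)=k|p|$ and $i(x_i,y)=k|q|$, so the ratio is exactly $|p/q|=|y|$. The integer points adjacent to $x_i=\frac{1}{0}$ are $\lfloor y\rfloor$ and $\lceil y\rceil$, which differ by one; WLOG by Lemma~\ref{ru} say $L-2M\le d_{R(x_i)}(x_{i-1},\lfloor y\rfloor)\le L+2M$. The point is that $x_{i-1}=\frac{0}{1}$ and $\lfloor y\rfloor=\frac{n}{1}$ for the integer $n=\lfloor y\rfloor$ are both distance one from $x_i$, and moving from $\frac{0}{1}$ to $\frac{n}{1}$ inside the annular complex $R(x_i)$ is (up to a bounded additive error coming from $p_{R(x_i)}$ versus an honest twist, and the possible half-twist case) essentially $|n|$ Dehn twists about $x_i$; by Lemma~\ref{min} this annular distance is comparable to $|n|$, in fact $d_{R(x_i)}(\frac{0}{1},\frac{n}{1})$ equals $|n|+2$ or $\lfloor |n|/2\rfloor+2$. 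Hence $|n|=|\lfloor y\rfloor|$ is sandwiched between roughly $L-2M-2$ and $2(L+2M-2)$, and since $|y|$ and $|\lfloor y\rfloor|$ differ by less than $1$ (indeed $\big||y|-|\lfloor y\rfloor|\big|<1$, with the analogous statement for $\lceil y\rceil$), the stated inequalities $L-2M-3\le |y|\le 2(L+2M)$ follow after absorbing the $\pm1$ and $\pm2$ constants.

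The main steps in order: (i) reduce to the normalization $x_i=\frac{1}{0}$, $x_{i-1}=\frac{0}{1}$ and identify the ratio with $|y|$; (ii) apply Lemma~\ref{ru} to get $L\asymp d_{R(x_i)}(x_{i-1},\lfloor y\rfloor)$ (or with $\lceil y\rceil$), additively up to $2M$; (iii) use Lemma~\ref{min} to convert that annular distance into $|\lfloor y\rfloor|$ (or $|\lceil y\rceil|$), noting the factor-of-two ambiguity between the full-twist and half-twist formulas — this is the source of the multiplicative $2$ in the upper bound; (iv) pass from $|\lfloor y\rfloor|$ (or $|\lceil y\rceil|$) back to $|y|$ at the cost of an additive $1$; (v) collect constants.

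The step I expect to be the main obstacle is step (iii): carefully justifying that the annular distance $d_{R(x_i)}$ between $\frac{0}{1}$ and the integer $\frac{n}{1}$ is governed by Lemma~\ref{min}. One has to be a little careful because Lemma~\ref{min} is phrased for $d_{R(x_i)}(y,T_{x_i}^n(y))$ and for the half-twist case it requires $y$ to meet the core $x_i$ exactly twice with opposite orientation; here the relevant curve is $x_{i-1}=\frac{0}{1}$, which meets $x_i=\frac{1}{0}$ exactly once, so one is in (or close to) the full-twist regime, and $\frac{n}{1}=T_{x_i}^{\pm n}(\frac{0}{1})$ after choosing orientations appropriately. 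The genuinely delicate point is the $S_{0,4}$ versus $S_{1,1}$ distinction (the constant $k=2$ vs.\ $k=1$ cancels in the ratio, which is reassuring) and the fact that the half-twist can occur in $S_{0,4}$, costing the factor $2$; I would handle this by simply taking the worse of the two formulas in Lemma~\ref{min} in each direction, which is exactly what produces the asymmetric bounds $L-2M-3$ on the left and $2(L+2M)$ on the right.
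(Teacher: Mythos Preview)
Your proposal is correct and follows essentially the same route as the paper's proof: normalize to $x_i=\tfrac{1}{0}$, $x_{i-1}=\tfrac{0}{1}$ so that the ratio equals $|y|$; apply Lemma~\ref{ru} to replace $L$ by $d_{R(x_i)}(x_{i-1},n)$ for $n\in\{\lfloor y\rfloor,\lceil y\rceil\}$; convert this annular distance to $|n|$ via Lemma~\ref{min} (full twist in $S_{1,1}$, half twist in $S_{0,4}$); and then pass from $|n|$ to $|y|$ at additive cost $1$. Two small slips worth fixing: in $S_{0,4}$ the curves $\tfrac{0}{1}$ and $\tfrac{1}{0}$ meet \emph{twice}, not once, which is precisely why the half-twist formula $n=H_{x_i}^n(x_{i-1})$ applies there (you acknowledge this a sentence later, so the confusion is only local); and you should invoke Lemma~\ref{2} to ensure $n\neq 0$, since Lemma~\ref{min} is stated only for nonzero exponent.
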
 
\begin{proof}
We may assume that $x_{i-1}=\frac{0}{1}$ and $x_{i}=\frac{1}{0}$. (If we do not have $x_{i-1}=\frac{0}{1}$ and $x_{i}=\frac{1}{0}$ initially, we can find a mapping class which acts on $g_{x,y}$ so that we have them.) Therefore, $|y|=\frac{i(x_{i-1}, y)}{i(x_{i}, y)}$. 
By Lemma \ref{2}, we observe that $\lfloor y\rfloor  \neq 0$ and $\lceil y \rceil \neq 0$. Let $n \in \{\lfloor y \rfloor,\lceil y \rceil \}.$ Since $n=T_{x_{i}} ^{n}(x_{i-1}),H_{x_{i}} ^{n}(x_{i-1})$ if $S=S_{1,1},S_{0,4}$ respectively, by Lemma \ref{min} we have 
$$d_{R(x_{i})}(x_{i-1},n)=
\left\{\begin{array}{ll}
		|n|+2  & \mbox{if } S=S_{1,1}. \\\\
		\Big \lfloor \frac{|n|}{2} \Big \rfloor+2  & \mbox{if }  S=S_{0,4}.
	\end{array}\right.$$
Since we always have $|n|-1\leq |y|\leq |n|+1$ (independent of $y>0$ or $y<0$), by the above observation, we have 
\begin{itemize}
\item $d_{R(x_{i})}(x_{i-1},n)-3 \leq |n|-1\leq |y| \leq |n|+1\leq d_{R(x_{i})}(x_{i-1},n) \text{ if } S=S_{1,1}.$
\item $2\cdot (d_{R(x_{i})}(x_{i-1},n)-3) \leq|n|-1\leq |y| \leq |n|+1\leq 2\cdot d_{R(x_{i})}(x_{i-1},n) \text{ if } S=S_{0,4}.$
\end{itemize}
Therefore, we always have $$d_{R(x_{i})}(x_{i-1},n)-3\leq |y|\leq 2\cdot d_{R(x_{i})}(x_{i-1},n).$$ Here we use Lemma \ref{ru}, and we have $$(L-2M)-3 \leq |y| \leq 2\cdot(L+2M).$$ 
We are done by letting $|y|=\frac{i(x_{i-1}, y)}{i(x_{i}, y)}.$

\end{proof}

\section{Theorem \ref{E''} }
By using Theorem \ref{4}, we prove Theorem \ref{E''}. We first observe the following lemma which enables us to pick any geodesic between $x,y\in C(S)$ to show Theorem \ref{E''}.

\begin{lemma}\label{mayw}
Suppose $\x(S)=1$ and $A\subsetneq S$, i.e., $A$ is an annulus. Let $x,y\in C(S)$. If $d_{A}(x,y)>M$ then $\p A$ is contained in every geodesic between $x$ and $y$. 
\end{lemma}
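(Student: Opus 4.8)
The plan is to argue by contradiction using the Bounded Geodesic Image Theorem. Write $a = \p A$ for the core curve of the annulus $A$, viewed as a vertex of $C(S)$, and recall that for a curve $c$ one has $\pi_A(c) = \emptyset$ exactly when $c$ can be isotoped off the core of $A$, i.e., when $c$ is disjoint from $a$ or $c = a$.

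The first step is the key structural observation: since $\x(S) = 1$, the complex $C(S)$ is a Farey graph and any two \emph{distinct} curves in $S$ intersect (with intersection number $1$ if $S = S_{1,1}$ and $2$ if $S = S_{0,4}$). Hence for $c \in C(S)$ we have $\pi_A(c) = \emptyset$ if and only if $c = a$. In particular, since $d_A(x,y) > M \geq 1$, both $\pi_A(x)$ and $\pi_A(y)$ are nonempty, so neither $x$ nor $y$ equals $a$.

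Next I would suppose, for contradiction, that some geodesic $g_{x,y} = \{x_i\}_{i=0}^{n}$ between $x$ and $y$ does not contain $a$. By the previous paragraph, $\pi_A(x_i) \neq \emptyset$ for every $i$. Applying the Bounded Geodesic Image Theorem (Theorem \ref{BGIT}) with $Z = A \subsetneq S$ to the geodesic $\{x_i\}$ then yields $d_A(x,y) = d_A(x_0, x_n) \leq M$, contradicting the hypothesis $d_A(x,y) > M$. Therefore $\p A$ must lie on every geodesic between $x$ and $y$.

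I do not expect a real obstacle here: the only point that needs care is the characterization of when an annular projection is empty, which in this setting collapses to the single possibility "$c = a$" precisely because a complexity-one surface contains no pair of disjoint distinct curves. Once that is in hand, the conclusion is immediate from Theorem \ref{BGIT}.
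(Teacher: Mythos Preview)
Your argument is correct and is essentially the same as the paper's: both apply the Bounded Geodesic Image Theorem to force some vertex of any geodesic $g_{x,y}$ to have empty projection to $A$, and then use that in complexity one the only such vertex is $\partial A$. The paper phrases this directly rather than by contradiction, and leaves the ``$\pi_A(c)=\emptyset \Leftrightarrow c=\partial A$'' step implicit, but the content is identical.
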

\begin{proof} 
By Theorem \ref{BGIT}, some vertex of $g_{x,y}$ does not project to $A$, which means that $\partial A $ is contained in $g_{x,y}$.
\end{proof} 

Now, we show 

\begin{theorem}\label{e}
Suppose $\x(S)=1$. Let $x,y\in C(S)$. We have 
\begin{itemize}
\item $\d \log i(x,y)\leq U^{+} \cdot \bigg( \sum_{Z\subseteq S}[ d_{Z}(x,y)]_{k}+\sum_{A\subseteq S} \log [d_{A}(x,y)]_{k} \bigg)+U^{+}$ for all $k\geq M$, where $U^{+}=k\cdot \log k^{2}\leq k^{3}$.

\item $\d \log i(x,y) \geq  \frac{1}{U^{-}}\cdot \bigg( \d\sum_{Z\subseteq S}[ d_{Z}(x,y)]_{k}+\sum_{A\subseteq S} \log [d_{A}(x,y)]_{k}  \bigg) -U^{-} $ for all $k\geq 3M,$ where $U^{-}=\max \{2,  (\log \frac{3}{2})^{-1}, \log \frac{3}{2} \}=2 .$
\end{itemize}
\end{theorem}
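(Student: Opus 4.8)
The plan is to establish the two inequalities separately, in each case reducing the statement about $\log i(x,y)$ to a telescoping product along a fixed geodesic $g_{x,y} = \{x_i\}_{i=0}^{d}$ with $d = d_S(x,y)$, and then to control each ratio $i(x_{i-1},y)/i(x_i,y)$ using the machinery of $\S 3$. The key algebraic identity is
\[
i(x,y) = i(x_0,y) = \prod_{i=1}^{d} \frac{i(x_{i-1},y)}{i(x_i,y)},
\]
since $i(x_d,y) = i(y,y) = 0$ is not quite right — rather one stops at $i = d-1$ and notes $i(x_{d-1},y)$ is bounded (indeed $x_{d-1}$ and $y$ are adjacent in the Farey graph), so $\log i(x,y) \asymp \sum_{i=1}^{d-1} \log \frac{i(x_{i-1},y)}{i(x_i,y)}$ up to a bounded additive error. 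Thus both directions come down to summing $\log$ of the ratios handled by Lemma \ref{2} and Theorem \ref{4}.

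For the lower bound, I would use Lemma \ref{2}, which gives $\frac{i(x_{i-1},y)}{i(x_i,y)} > \frac{3}{2}$ for every $0 < i < d$, hence $\log i(x,y) \geq (d-1)\log\frac{3}{2}$, i.e. $d_S(x,y) \prec \log i(x,y)$. Then Lemma \ref{basic} type reasoning is not what is needed; instead I need to bound the full right-hand side $\sum_Z [d_Z(x,y)]_k + \sum_A \log[d_A(x,y)]_k$ by a multiple of $\log i(x,y)$. Here the relevant input is the Bounded Geodesic Image Theorem: for a non-annular proper $Z$ (there are none when $\x(S)=1$ except annuli — wait, when $\x(S)=1$ the only proper subsurfaces of $S_{1,1}$ or $S_{0,4}$ are annuli and possibly $S$ itself is excluded), so in fact the non-annular sum is empty and only the annular terms $A = R(x_i)$ survive, and only for those $x_i$ actually lying on some geodesic. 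Using Lemma \ref{mayw}, the annuli $A$ with $d_A(x,y) > M$ have $\partial A = x_i$ for some vertex $x_i$ on $g_{x,y}$, and Theorem \ref{4} bounds $d_{R(x_i)}(x,y) = L$ above by roughly $2 \cdot i(x_{i-1},y)/i(x_i,y)$. Summing $\log$ of these over $i$ and comparing term-by-term with $\sum_i \log\frac{i(x_{i-1},y)}{i(x_i,y)} = \log i(x,y)$ (up to the bounded endpoint correction) gives the lower bound with constant $U^- = 2$; the choices of $\max\{2, (\log\tfrac32)^{-1}, \log\tfrac32\}$ in the statement are exactly the constants coming from Lemma \ref{2} and this comparison, and $k \geq 3M$ is needed so that the cut-off does not discard terms that Theorem \ref{4} still controls.

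For the upper bound, I would again fix $g_{x,y}$, but now run Theorem \ref{4} in the other direction: for each $0 < i < d$, $\frac{i(x_{i-1},y)}{i(x_i,y)} \leq 2(L_i + 2M)$ where $L_i = d_{R(x_i)}(x,y)$, with the convention $L_i$ may be small (even $0$) for most $i$. Taking logarithms and summing,
\[
\log i(x,y) \;\prec\; \sum_{i=1}^{d-1} \log\bigl(2(L_i + 2M)\bigr) \;\prec\; \sum_{i=1}^{d-1}\bigl(1 + \log[L_i]_k\bigr)
\]
for $k \geq M$ large enough that $\log(2(L_i+2M)) \leq U^+ \log[L_i]_k + U^+/d \cdot(\cdots)$ when $L_i > k$, while the terms with $L_i \leq k$ contribute only to the "$+U^+$" and the "$d$" part, which is itself $\prec \log i(x,y)$ by the lower bound already established — so those get absorbed. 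Organizing this bookkeeping so that the number $d$ of geodesic steps and the small-projection terms are both charged against $\log i(x,y)$ (rather than left floating) is where the constant $U^+ = k \log k^2$ appears: one factor of $k$ from the number of annuli that can have projection just above the cut-off per unit distance (bounded by Lemma \ref{kp}-type counting, or rather by the Farey structure), and $\log k^2$ from the logarithmic comparison of $\log(2(L+2M))$ with $\log[L]_k$.

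The main obstacle I anticipate is the careful accounting in the upper bound: one must simultaneously (i) pass from the product over the geodesic to the sum over annuli $A$, making sure each annulus $A$ with large projection is $R(x_i)$ for a vertex on the geodesic (Lemma \ref{mayw}) and conversely that each such $x_i$ contributes $L_i = d_{R(x_i)}(x,y)$ correctly, (ii) handle the geodesic length $d = d_S(x,y)$, which is not itself on the right-hand side, by feeding back the lower bound $d - 1 \leq \frac{1}{\log(3/2)}\log i(x,y)$, and (iii) choose $k$ large enough ($k \geq M$ for the upper bound, $k \geq 3M$ for the lower) that the $[\cdot]_k$ cut-offs neither lose information nor create spurious terms. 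None of the individual steps is deep — each is an application of Theorem \ref{4}, Lemma \ref{2}, Lemma \ref{mayw}, or the Bounded Geodesic Image Theorem — but keeping all constants explicit, as promised in the introduction, requires threading them through every inequality without slack.
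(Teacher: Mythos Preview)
Your overall strategy---telescoping along a geodesic $g_{x,y}=\{x_i\}$ and controlling each ratio $i(x_{i-1},y)/i(x_i,y)$ via Lemma~\ref{2} and Theorem~\ref{4}---is exactly the paper's approach. However, there is a genuine gap in your handling of the upper bound, stemming from a misreading of the right-hand side.

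You claim that the non-annular sum $\sum_{Z\subseteq S}[d_Z(x,y)]_k$ is empty when $\xi(S)=1$. It is not: the sum ranges over $Z\subseteq S$, which includes $Z=S$ itself, so this sum is exactly $[d_S(x,y)]_k$. That single term is what absorbs the geodesic length $d=d_S(x,y)$ appearing when you sum the logarithms of the ratios. The paper does \emph{not} feed back the lower bound $d-1\le (\log\tfrac32)^{-1}\log i(x,y)$ as you propose in step~(ii); that move is circular for the upper bound. After telescoping you have, roughly,
\[
\log i(x,y)\ \le\ \sum_{A}\log[d_A(x,y)]_k \ +\ (\log k^2)\cdot d \ +\ O(1),
\]
and substituting $d\lesssim \log i(x,y)$ yields $\log i(x,y)\le \sum_A\log[d_A]_k + C\log i(x,y)$ with $C=(\log k^2)/\log\tfrac32\gg 1$ for $k\ge M$, which says nothing. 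Instead, the paper simply splits into the cases $d_S(x,y)>k$ (so $(\log k^2)\cdot d\le (\log k^2)\cdot[d_S(x,y)]_k$, already on the right-hand side) and $d_S(x,y)\le k$ (so $(\log k^2)\cdot d\le k\log k^2=U^+$, the additive constant). This is the actual origin of $U^+=k\log k^2$, not any ``number of annuli per unit distance'' count.

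Concretely, the paper sets $L_i^+=k^2$ when $L_i\le k$ and $L_i^+=k^2L_i$ when $L_i>k$, so that $2(L_i+2M)\le L_i^+$ for $k\ge M$; then $\sum_i\log L_i^+=(\log k^2)(d-1)+\sum_A\log[d_A(x,y)]_k$ on the nose. The lower bound is symmetric, with $L_i^-=\tfrac32$ or $\tfrac32\sqrt{L_i}$ and the condition $k\ge 3M$ ensuring $L_i^-\le\max\{\tfrac32,\,L_i-2M-3\}$; your sketch of that direction is essentially correct once you also account for the $[d_S(x,y)]_k$ term (which, in \emph{that} direction, is harmlessly bounded by a multiple of $\log i(x,y)$ via Lemma~\ref{2}).
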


\begin{proof} 
Let $g_{x,y}=\{ x_{i}\}$ such that $d_{S}(x,x_{i})=i$ for all $i$. By Lemma \ref{mayw}, if $[d_{A}(x,y)]_{k}>0$ then $\p A=x_{j}$ for some $j$.

Let $d_{R(x_{i})}(x,y)=L_{i}$ for all $0<i<d_{S}(x,y)$. By Lemma \ref{2} and Theorem \ref{4}, we have $$\max \bigg\{ \frac{3}{2},L_{i}-2M-3\bigg\} \leq \frac{i(x_{i-1}, y)}{i(x_{i}, y)} \leq  2(L_{i}+2M).$$ 

Recall $M=200$.

\underline{\textbf{First statement}}: First, we define $L_{i}^{+} =k^{2}$ if $L_{i} \leq k$ and $L_{i}^{+} =k^{2} L_{i}$ if $L_{i}> k$. Then, since $k\geq M$, we have $2(L_{i}+2M)\leq L_{i}^{+}$; in particular, we have $$\frac{i(x_{i-1}, y)}{i(x_{i}, y)} \leq  2(L_{i}+2M) \leq L_{i}^{+}.$$
Therefore, we have $$\d \prod_{i=1}^{d_{S}(x,y)-1} \frac{i(x_{i-1}, y)}{i(x_{i}, y)} =\frac{i(x, y)}{i(x_{d_{S}(x,y)-1}, y)}  \leq \prod_{i=1}^{d_{S}(x,y)-1} L_{i}^{+}.$$
By taking $\log$, we have $$\d \log i(x,y) \leq \log i(x_{d_{S}(x,y)-1}, y)+\sum_{i=1}^{d_{S}(x,y)-1} \log L_{i}^{+}.$$
Since $i(x_{d_{S}(x,y)-1}, y)=1,2$ if $S=S_{1,1}, S_{0,4}$ respectively, we have 
\begin{eqnarray*}
\displaystyle \log i(x,y) &\leq& \log 2+ \sum_{i=1}^{d_{S}(x,y)-1} \log L_{i}^{+}\\\\&=& \log 2+ \log k^{2}\cdot (d_{S}(x,y)-1)  +  \bigg( \sum_{A\subseteq S }\log [d_{A}(x,y)]_{k}\bigg)\\\\&\leq& \log k^{2} \cdot d_{S}(x,y)+\bigg( \sum_{A \subseteq S}\log [d_{A}(x,y)]_{k}\bigg).
\end{eqnarray*}

If $[d_{S}(x,y)]_{k}>0$, then we have $$\d \log i(x,y) \leq \log k^{2} \cdot \bigg(  [d_{S}(x,y)]_{k} +\sum_{A \subseteq S}\log [d_{A}(x,y)]_{k}\bigg).$$

If $[d_{S}(x,y)]_{k}=0$, then we have $$\d \log i(x,y) \leq \bigg( [d_{S}(x,y)]_{k}+\sum_{A \subseteq S}\log [d_{A}(x,y)]_{k}\bigg)+k\cdot \log k^{2} .$$

\underline{\textbf{Second statement}}: The proof is analogous to the proof of the previous case; we briefly go over the proof. 

First, we define $L_{i}^{-}=\frac{3}{2}$ if $L_{i} \leq k$ and $L_{i}^{-}=\frac{3}{2}\sqrt{L_{i}}$ if $L_{i} > k$. Then, since $k\geq 3M$, we have $L_{i}^{-}\leq \max\{\frac{3}{2},L_{i}-2M-3\}$. Therefore, we have 
\begin{eqnarray*}
\displaystyle L_{i}^{-} \leq \frac{i(x_{i-1}, y)}{i(x_{i}, y)}. &\Longrightarrow& \sum_{i=1}^{d_{S}(x,y)-1} \log L_{i}^{-} +\log i(x_{d_{S}(x,y)-1}, y) \leq \log i(x,y).
\end{eqnarray*}
Lastly, let $i(x_{d_{S}(x,y)-1}, y)=1$, and observe
\begin{eqnarray*}
\displaystyle \sum_{i=1}^{d_{S}(x,y)-1} \log L_{i}^{-}= \log \frac{3}{2}\cdot (d_{S}(x,y)-1) + \frac{1}{2}\cdot \bigg( \sum_{A\subseteq S }\log [d_{A}(x,y)]_{k}\bigg).
\end{eqnarray*}
\end{proof}

\subsection{Intersections of arcs and curves}
The goal of this section is to observe a technical fact, Lemma \ref{i}. We use it in our inductive argument to prove Theorem \ref{E} in the next section.

Suppose $C$ is a subset of $S$; we let $S-C$ denote the set of complementary components of $C$ in $S$, which we treat as embedded subsurfaces in $S$.

We say that two curves $a$ and $b$ in $S$ form a \emph{bigon} if there is an embedded disk in $S$ whose boundary is the union of an interval of $a$ and an interval of $b$ meeting in exactly two points. 
It is well--known that two curves are in minimal position if and only if they do not form a bigon. This fact is called the \emph{bigon criteria} in \cite{FM}. It is also well--known that two curves $a$ and $b$ are in minimal position if and only if every arc obtained by $b \cap (S-a)$ is essential in $S-a$. 

We show
\begin{lemma}\label{i}
Let $x \in C(S)$ and $y\in A(S)$ such that they are in minimal position. 
\begin{itemize}
\item Suppose $\partial y$ is contained in two distinct boundary components of $S$. Then we have $i(x, \pi_{S}(y))=2\cdot i(x,y).$
\item Suppose $\partial y$ is contained in a single boundary component of $S$. Then we have $i(x, Y)= i(x,y)$ for some $Y\in \{ \pi_{S}(y)\}.$ 
\end{itemize}
In particular, we always have $i(x,y)\leq i(x, \pi_{S}(y)).$
\end{lemma}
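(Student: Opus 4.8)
The plan is to analyze the arc $y$ and its projection $\pi_S(y) = p_S(y) = \partial R(y \cup z \cup z')$ directly, using the topological picture of a regular neighborhood. Recall that $\pi_S(y)$ is obtained by taking $y$ together with the boundary component(s) of $S$ that its endpoints meet, thickening, and taking the boundary of that regular neighborhood. The key geometric observation is that each intersection point of $x$ with $y$ gets "doubled" when we pass to the boundary of the neighborhood $R(y \cup z \cup z')$, because near an interior point of $y$ the neighborhood has two parallel sides, and $x$ (being in minimal position with $y$, hence crossing $y$ transversally) crosses both. So I would first set up minimal position of $x$ with the whole configuration $y \cup z \cup z'$ — note $x$ is a curve not parallel to $\partial S$, so it is disjoint from the boundary curves $z, z'$ and meets them zero times — and then count how $x$ meets $\partial R(y \cup z \cup z')$.

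For the first bullet, $\partial y$ lies on two distinct boundary components $z$ and $z'$. Then $R(y \cup z \cup z')$ is a thickened "H" (or rather a pair of annuli joined by a band), and its boundary $\partial R(y \cup z \cup z')$ is a single curve (this is exactly the generic case for $p_Z$ where $|\{p_Z(y)\}| = 1$ when both endpoints are on different components, or we take the relevant component). Following $x$ around: each time $x$ crosses $y$ it must cross $\partial R$ exactly twice (once on each parallel copy of $y$ sitting in the boundary of the band), and $x$ picks up no intersections from the annular parts around $z, z'$ since $x \cap z = x \cap z' = \emptyset$. Hence $i(x, \pi_S(y)) = 2 i(x,y)$, after checking that no bigons are created — i.e., that this count is realized, which follows because a bigon between $x$ and $\partial R$ would push down to a bigon between $x$ and $y$, contradicting minimal position (here I would invoke the bigon criterion / the characterization via essential arcs recalled just before the lemma).

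For the second bullet, $\partial y$ lies on a single boundary component $z$. Now $R(y \cup z)$ is a thickened annulus-with-a-band, and $\partial R(y \cup z)$ has two components; this is the case $|\{p_Z(y)\}| = 2$. One of these two boundary curves runs alongside $y$ only "once" in the relevant sense — more precisely, cutting $R(y \cup z)$ appropriately, one component $Y$ of $\partial R(y \cup z)$ is isotopic to a curve meeting $x$ exactly $i(x,y)$ times (it is the component that, together with $z$, cobounds an annulus containing $y$ as a spanning arc, so it crosses $x$ the same number of times $y$ does), while the other runs alongside $y$ twice. So I would identify $Y$ explicitly as that component and again rule out bigons by the same descent argument. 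The "in particular" clause is then immediate: in the first case $i(x,\pi_S(y)) = 2 i(x,y) \geq i(x,y)$, and in the second case $\{\pi_S(y)\}$ contains $Y$ with $i(x,Y) = i(x,y)$, and since $\pi_S(y)$ is the union over its (at most two) elements, $i(x,\pi_S(y)) = \sum_{Y' \in \{\pi_S(y)\}} i(x,Y') \geq i(x, Y) = i(x,y)$.

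The main obstacle I expect is the careful bookkeeping in the second bullet: correctly identifying \emph{which} of the two boundary components of $R(y \cup z)$ is the one with $i(x,Y) = i(x,y)$, and making the "runs alongside $y$ once versus twice" statement precise rather than hand-wavy. A clean way to do this is to cut $S$ along $y$: since $x$ and $y$ are in minimal position, every arc of $x \cap (S - y)$ is essential, and one tracks the boundary curves of $R(y\cup z)$ as curves in $S-y$ (or just in $S$) and counts crossings with these essential arcs. I would also want to be slightly careful that the regular neighborhood is taken of $y$ together with the boundary component(s) it actually touches and that the resulting curves are essential and non-peripheral so they genuinely lie in $C(S)$ — but this is part of the definition of $p_S$ and I would treat it as given; the substantive content is purely the intersection count, and the bigon-descent argument is the uniform tool that makes each count sharp.
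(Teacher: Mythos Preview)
Your overall strategy---realize $\pi_S(y)$ concretely as the frontier of the regular neighborhood, count geometric crossings with $x$, and certify minimality via the bigon/essential-arc criterion---is exactly the paper's approach, and your treatment of the first bullet is essentially correct (the only point needing a word beyond ``bigon descent'' is the arc of $\pi_S(y)$ that loops around $B$ or $B'$; there the essentiality comes not from an arc of $y$ but from the fact that an inessential such arc would force $B$ to be isotopic to $x$).

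The second bullet, however, contains a genuine topological error. When $\partial y$ lies on a single boundary component $B$, the neighborhood $R(y\cup B)$ is a pair of pants with boundary $B\cup Y_1\cup Y_2$, where $Y_i$ is parallel to $y\cup B_i$ (here $B_1,B_2$ are the two arcs of $B$ cut by $\partial y$). Thus \emph{each} of $Y_1,Y_2$ runs along $y$ exactly once, so each meets $x$ geometrically in $i(x,y)$ points---there is no component that ``runs alongside $y$ twice,'' and no component that cobounds an annulus with $B$ containing $y$ as a spanning arc (such a component would be peripheral). So the issue is not which component has the right geometric count; both do. The issue is which one is in \emph{minimal} position with $x$.

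This is where your bigon-descent breaks down and the paper does real work. Cutting along $x$, every interior arc of $Y_i\cap(S-x)$ is parallel to an essential arc of $y\cap(S-x)$, but each $Y_i$ also contributes one special arc $R_i=\partial R(a_p\cup B_i\cup a_q)$ passing around $B_i$ (where $a_p,a_q$ are the two end-arcs of $y\cap(S-x)$). When $a_p,a_q$ land on the \emph{same} copy of $x$ in $S-x$, one of $R_1,R_2$ can genuinely be inessential (so one $Y_i$ really can have a bigon with $x$), and you must argue that not both are: if both bounded disks, $B$ would be isotopic to $x$. Correcting your picture of $Y_1,Y_2$ and inserting this case analysis recovers the paper's proof.
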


\begin{proof}
It suffices to show that the arcs obtained by $\pi_{S}(y) \cap (S-x)$ are all essential in $S-x$. (If $\partial y$ lie on a boundary of $S$ then $\pi_{S}(y)$ can contain two components; in this case we need to show the essentiality of arcs for one of the components.)

Let $\{a_{i}\}$ be the set of arcs obtained by $y \cap (S-x)$; they are all essential in $S-x$ since $x$ and $y$ are in minimal position.
\newline

\underline{\textbf{First statement}}: Pick $a_{p},a_{q}\in \{a_{i}\} $ such that one of $\partial a_{p}$ lies on $B\in \partial S$ and one of $\partial a_{q}$ lies on $B'\in \partial S$. Let $R_{p}=\partial  R( a_{p} \cup B)$ and $R_{q}=\partial  R( a_{q}\cup B')$; we note $R_{p}, R_{q}\in \{\pi_{S}(y) \cap (S-x)\}.$
By the definition of subsurface projections, every element in $\{\pi_{S}(y) \cap (S-x)\} \setminus \{R_{p},R_{q}\}$ is parallel to some element in $\{a_{i}\} \setminus \{a_{p},a_{q}\}$, which is originally essential in $S-x$.

We observe that if $R_{p}$ and $R_{q}$ are both essential in $S-x$, then every other element in $\{\pi_{S}(y) \cap (S-x)\}$ stays essential in $S-x$.
Indeed, $R_{p}$ is an essential arc in $S-x$, otherwise $B$ and $x$ would be isotopic. See Fig. \ref{8}. The same argument works for $R_{q}.$
\newline

\underline{\textbf{Second statement}}: Pick $a_{p},a_{q} \in \{a_{i}\}$ such that $c\in \partial a_{p}$ and $c'\in \partial a_{q}$ lie on $B\in \partial S$. Now, we have two cases, i.e.,  their other boundaries lie on two distinct boundaries (Case 1) or the same boundary (Case 2) of $S-x$, which come from cutting $S$ along $x$. 
Let $B_{1}$ and $B_{2}$ be the closure of the intervals of $B$, which are obtained by removing $\{c, c'\}$ from $B$. See Fig. \ref{9}.

For both cases, it suffices to check the essentiality of $\partial  R( a_{p}\cup B_{1} \cup a_{q})=R_{1} $ or $\partial  R( a_{p}\cup B_{2} \cup a_{q}) =R_{2}$ in $S-x$.

\underline{Case 1}: $R_{1}$ and $R_{2}$ are both essential in $S-x$ since the endpoints of them are contained in two distinct boundaries of $S-x$.

\underline{Case 2}: $R_{1}$ or $R_{2}$ needs to be essential in $S-x$, otherwise $B$ and $x$ would be isotopic. 

\begin{figure}[htbp]
 \begin{center}
  \includegraphics[width=2.8cm,height =3cm]{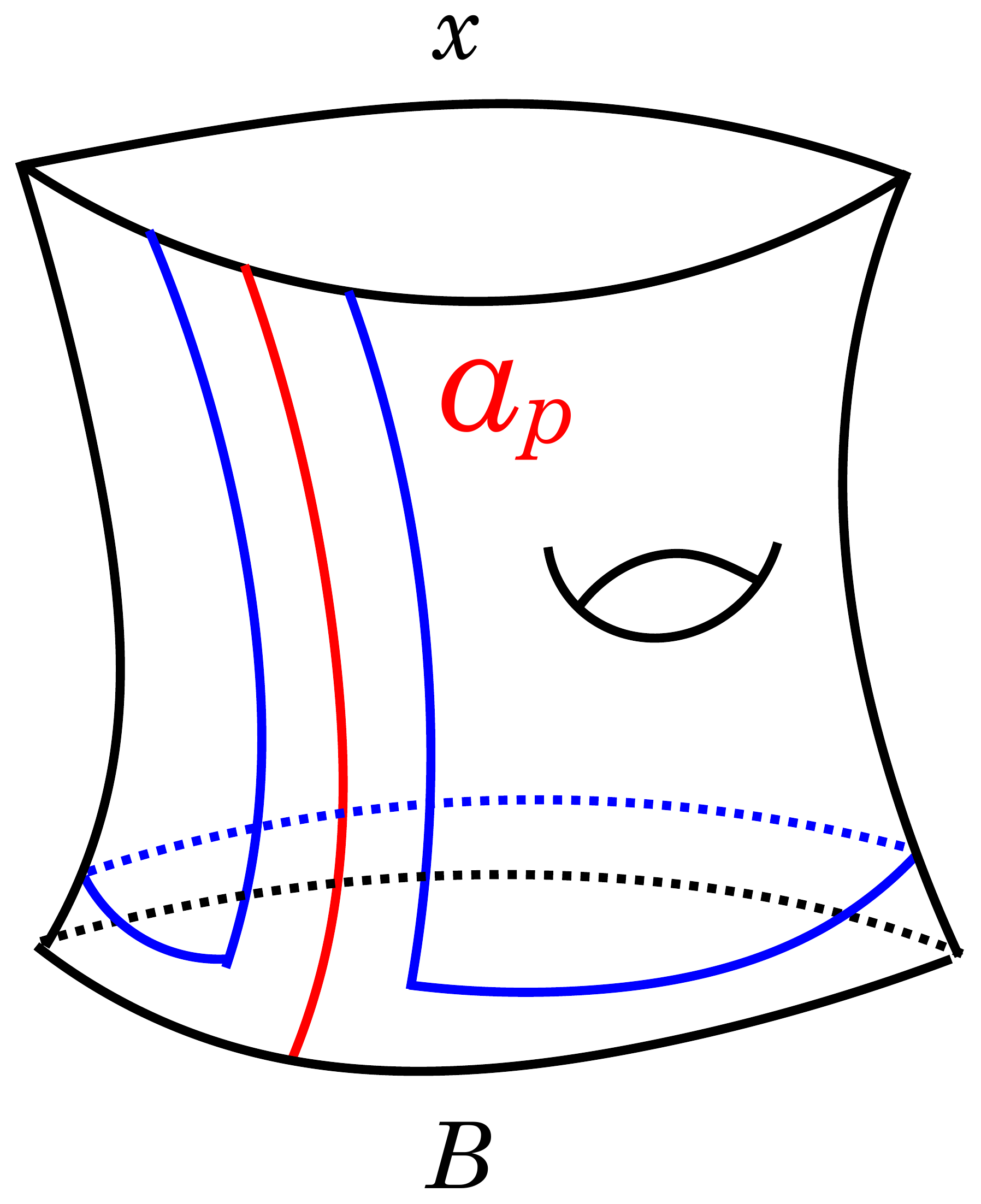}
   \end{center}
   \caption{$R_{p}=\partial R( a_{p}\cup B) \in \{\pi_{S}(y) \cap (S-x)\}$ in $S-x$.}
  \label{8}

\end{figure}

\begin{figure}[htbp]
 \begin{center}
  \includegraphics[width=8cm,height =3.5cm]{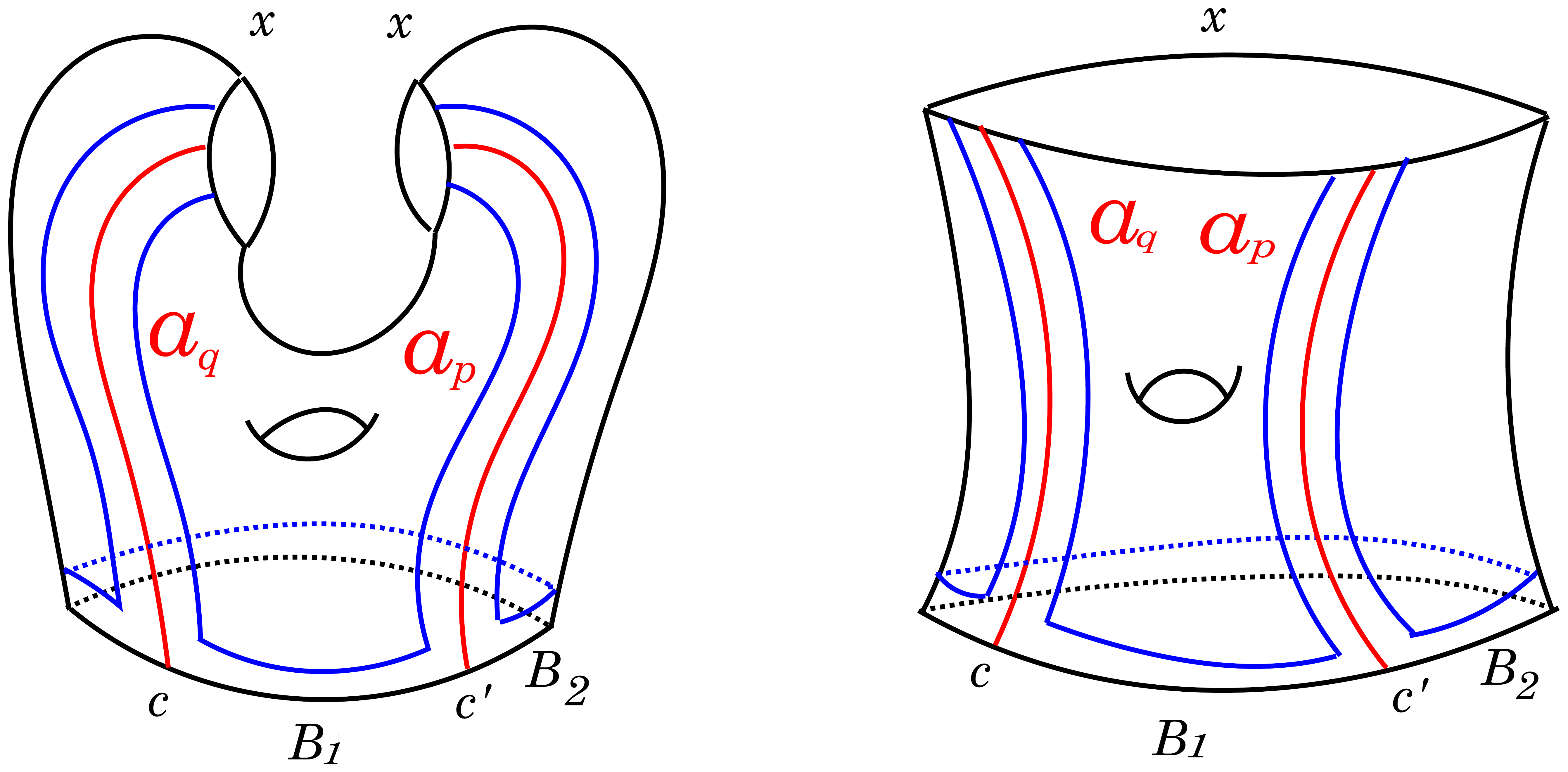}
   \end{center}
    \caption{Case 1 is on left and Case 2 is on right. $B=B_{1}\cup B_{2}$.}
 \label{9}
\end{figure}

\end{proof}

\section{Theorem \ref{E}}

The goal of this section is completing the proof of Theorem \ref{E}. It will be proved by an inductive argument on the complexity; the base case was proved in Theorem \ref{E''}. We note Lemma \ref{tama} is the key for this induction. 

In order to simplify our notations for the rest of this section, we define the following. 

\begin{definition}\label{definition}
Suppose $\x(S)>1$.
\begin{itemize}
\item Let $x,y\in C(S)$ and $g_{x,y} = \{x_{i}\}$ such that $d_{S}(x,x_{i}) = i$ for all $i$. We define 

\begin{itemize}
\item $\d G_{i}(k)=\sum_{Z\subseteq S-x_{i}} [ d_{Z}(x_{i-1},y)]_{k}+\sum_{A\subseteq S-x_{i}} \log [d_{A}(x_{i-1},y)]_{k}.$

\item $\d \G(k)=\sum_{i=1}^{d_{S}(x,y)-1} G_{i}(k).$

\end{itemize}

\item Let $N_{S_{g,n}}$ be the smallest cut-off constant so that Theorem \ref{E} holds for $S_{g,n}$; take $k\geq N_{S_{g,n}} $, we let $P_{S_{g,n}}(k)$ and $Q_{S_{g,n}}(k)$ be the multiplicative and additive constants on Theorem \ref{E} respectively, i.e., $$\log i(x,y) \leq P_{S_{g,n}}(k) \cdot \bigg( \sum_{Z\subseteq S_{g,n}}[ d_{Z}(x,y)]_{k}+\sum_{A\subseteq S_{g,n}} \log [d_{A}(x,y)]_{k}\bigg) + Q_{S_{g,n}}(k).$$ We let $$N=\max \{ N_{S_{g,n}} | \x(S_{g,n}) < \x(S)\}.$$

For $k \geq N$, we let
\begin{itemize}
\item $P(k)=\max \{ P_{S_{g,n}}(k) | \x(S_{g,n}) < \x(S)\}. $
\item $Q(k)=\max \{ Q_{S_{g,n}}(k) | \x(S_{g,n}) < \x(S)\}. $
\end{itemize}


In particular, $P(k),Q(k) \geq  k^{3}$ and $N\geq M$. See Theorem \ref{e}.
\end{itemize}
\end{definition}

As a warm--up of the proof of Lemma \ref{tama}, we first show

\begin{lemma}\label{tamas}
Suppose $\x(S)>1$. Let $x,y\in C(S)$ such that $d_{S}(x,y)= 2$. For all $n\geq N$, we have $$\log i(x,y)\stackrel{P(n), Q(n)}{\leq} G_{1}(n).$$
\end{lemma}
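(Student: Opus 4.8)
Since $d_S(x,y)=2$, the geodesic $g_{x,y}=\{x_0=x,x_1,x_2=y\}$ has the single intermediate vertex $x_1$, and $x_1$ is disjoint from both $x$ and $y$. The key point is that $x$ and $y$ then live in the complementary subsurface $S-x_1$: more precisely, $i_{S-x_1}(x)$ and $i_{S-x_1}(y)$ are disjoint unions of arcs (and possibly a curve) in the components of $S-x_1$, and $i(x,y)$ is exactly the sum of the intersection numbers of these arc/curve systems over the components of $S-x_1$, since $x$ and $y$ never cross $x_1$ and each intersection point of $x$ and $y$ lies in exactly one component. So I would first reduce to estimating $i(x,y)$ component-by-component in $S-x_1$, and each such component $W$ has $\x(W)<\x(S)$, so the inductive hypothesis (Theorem \ref{E}, encoded in $P(k),Q(k),N$) applies to $W$.

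Next I would handle the passage from arcs back to curves using Lemma \ref{i}. For a component $W$ of $S-x_1$, let $x^W=i_W(x)$ and $y^W=i_W(y)$; I want to bound $i(x^W,y^W)$ in terms of subsurface projection data in $W$. Lemma \ref{i} says that for an arc $y'$ in the arc system $y^W$, $i(x', y')\le i(x',\pi_W(y'))$ (with equality or a factor $2$ depending on whether $\partial y'$ lands on one or two boundary components), so $i(x^W,y^W)$ is, up to a bounded multiplicative factor absorbed in $P(n)$, comparable to $i(\pi_W(x),\pi_W(y))$, to which the inductive Theorem \ref{E} for $W$ applies: $\log i(\pi_W(x),\pi_W(y))\le P(n)\big(\sum_{Z\subseteq W}[d_Z(\pi_W(x),\pi_W(y))]_n+\sum_{A\subseteq W}\log[d_A(\pi_W(x),\pi_W(y))]_n\big)+Q(n)$. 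Then I would use that $d_Z(\pi_W(x),\pi_W(y))$ coarsely equals $d_Z(x,y)$ for $Z\subseteq W$ (projections to $Z$ factor through $\pi_W$ up to bounded error, cf.\ the standard behavior of subsurface projections), and that these can be replaced by $d_Z(x_1,y)=d_Z(x_0,y)$ since $d_S(x,x_1)=1$ forces $d_Z(x,x_1)\le 3$ by Lemma \ref{oct}; hence each summand is controlled by $[d_Z(x_{0},y)]_n$, i.e.\ by the terms appearing in $G_1(n)$ (recall $G_1(n)$ sums over $Z\subseteq S-x_1$ using $d_Z(x_0,y)$). Summing the at most $|\chi(S)|$ many components of $S-x_1$ introduces only a factor absorbed into $P(n)$.

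The final assembly: $\log i(x,y)=\log\sum_W i(x^W,y^W)\le \log\big(|\chi(S)|\cdot \max_W i(x^W,y^W)\big)\le \log|\chi(S)|+\max_W\log i(x^W,y^W)$, and each $\log i(x^W,y^W)$ is bounded via the above by $P(n)\cdot G_1(n)+ (\text{bounded})$, so $\log i(x,y)\stackrel{P(n),Q(n)}{\le}G_1(n)$ provided the accumulated multiplicative and additive losses fit inside $P(n)$ and $Q(n)$ — which they do because $P(n),Q(n)\ge n^3\ge M^3$ are taken large enough (this is exactly why $N,P(k),Q(k)$ were defined as maxima over all lower-complexity surfaces together with the $n^3$ floor from Theorem \ref{e}).

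**Main obstacle.** The genuinely delicate step is the bookkeeping that converts projection distances $d_Z(\pi_W(x),\pi_W(y))$ for $Z\subseteq W=$ a component of $S-x_1$ into the distances $d_Z(x_{0},y)=d_Z(x_{i-1},y)$ that actually appear in $G_1(n)$, keeping the cut-off constant at $n$ and not letting the additive error from Lemma \ref{oct} (the "$\le 3$") degrade the $[\cdot]_n$ thresholds — one must check that replacing $x$ by $x_1$ inside the projection, and replacing $\pi_W$-composed projections by direct projections, changes each $d_Z$ by at most an additive constant $\le M$, so that $[d_Z(\pi_W(x),\pi_W(y))]_n$ is dominated by a bounded multiple of $[d_Z(x_0,y)]_n$ once $n\ge N\ge M$. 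The annular terms need the analogous care with the $\log$. Everything else is routine once this comparison is pinned down.
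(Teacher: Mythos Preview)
You have missed the one observation that makes this lemma a two-line proof: since $d_S(x,x_1)=d_S(x_1,y)=1$ and $\xi(S)>1$, the curves $x$ and $y$ are \emph{disjoint} from $x_1$, so $i_{S-x_1}(x)=x$ and $i_{S-x_1}(y)=y$ are honest curves, not arcs. Moreover $i(x,y)>0$ forces them to lie in the \emph{same} component $S'$ of $S-x_1$, with $\xi(S')<\xi(S)$. The inductive hypothesis for $S'$ then gives directly
\[
\log i(x,y)\ \stackrel{P_{S'}(n),\,Q_{S'}(n)}{\le}\ \sum_{Z\subseteq S'}[d_Z(x,y)]_n+\sum_{A\subseteq S'}\log[d_A(x,y)]_n,
\]
and since neither $x$ nor $y$ projects to any subsurface of the other component, the right-hand side equals $G_1(n)$. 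That is the paper's entire argument; no arcs, no Lemma~\ref{i}, no Lemma~\ref{oct}, no component-by-component summation.

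Your proposed route is not merely longer---as written it would not yield the stated constants. The claim that ``the accumulated multiplicative and additive losses fit inside $P(n)$ and $Q(n)$'' is false: $P(n)$ and $Q(n)$ are by definition the \emph{exact} constants coming from the inductive hypothesis on lower-complexity surfaces, with no slack to absorb the factors you introduce from Lemma~\ref{i}, from summing over components, or from shifting by the $+3$ of Lemma~\ref{oct}. (Also, the sentence ``replaced by $d_Z(x_1,y)=d_Z(x_0,y)$'' is confused: $\pi_Z(x_1)=\emptyset$ for $Z\subseteq S-x_1$, and $x_0=x$, so there is nothing to replace.) Once you notice that $x$ and $y$ are curves in a single proper subsurface, every complication in your outline disappears.
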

\begin{proof}
Let $g_{x,y} = \{x_{i}\}$ such that $d_{S}(x,x_{i}) = i$ for all $i$. We let $ S-x_{1}=\{S',S''\}$. (If $x_{1}$ does not separate $S$, then we treat $ S-x_{1}=\{S'\}$.) Since $i(x,y)>0$, $x$ and $y$ are contained in the same component, say $S'$; we can use the inductive hypothesis, and we have $$\log i(x,y) \stackrel{P(n), Q(n)}{\leq} \sum_{Z\subseteq S'} [ d_{Z}(x,y)]_{n}+\sum_{A\subseteq S'} \log [d_{A}(x,y)]_{n}.$$

Recall that the subsurfaces which are considered in $G_{1}(n)$ are taken from $S-x_{1}$. However, if $W\subseteq S''$, then neither $x$ nor $y$ projects to $W$; therefore, the right--hand side of the above is same as $G_{1}(n).$
\end{proof}
We state the following algebraic identity. Recall that we treat $\log 0=0.$

\begin{lemma}\label{algebra}
Suppose $\{m_{i}\}_{i=1}^{l}\in \mathbb{N}_{\geq 0}$. Then, $\d \log \bigg(\sum_{i=1}^{l} m_{i}\bigg) \leq   \bigg(\sum_{i=1}^{l} \log m_{i} \bigg)+\log l.$
\end{lemma}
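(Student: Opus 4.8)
The plan is to reduce the statement to a pure arithmetic inequality and then take logarithms, the only subtlety being careful bookkeeping of the convention $\log 0 = 0$ that was recalled just before the lemma. First I would dispose of the degenerate case: if $m_i = 0$ for every $i$, then the left-hand side is $\log 0 = 0$ while the right-hand side is $0 + \log l \geq 0$, so the inequality holds. From now on assume $I := \{ i : m_i \geq 1\}$ is nonempty; note $|I| \leq l$.

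The core of the argument is the arithmetic chain
$$ \sum_{i=1}^{l} m_i \;=\; \sum_{i \in I} m_i \;\leq\; |I| \cdot \max_{i \in I} m_i \;\leq\; |I| \cdot \prod_{i \in I} m_i \;\leq\; l \cdot \prod_{i \in I} m_i,$$
where the first equality drops the zero terms, the second step bounds a sum by its number of terms times its largest term, the third uses that every factor $m_i$ with $i \in I$ satisfies $m_i \geq 1$ so that the product dominates the maximum, and the last step uses $|I| \leq l$. Taking $\log$ (which is monotone) of the two ends gives
$$\log\Big(\sum_{i=1}^l m_i\Big) \;\leq\; \log l + \sum_{i \in I} \log m_i \;=\; \log l + \sum_{i=1}^l \log m_i,$$
where the final equality reinserts the indices outside $I$ at no cost, since $\log m_i = \log 0 = 0$ (and also $\log 1 = 0$) for those indices. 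This is exactly the claimed bound.

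There is no genuine obstacle here; the content is the short arithmetic estimate above. The one place that deserves an explicit word is the $\log 0 = 0$ convention: without it the statement is false as soon as some $m_i$ vanishes, so I would remind the reader at the outset that this convention (already in force from the start of the paper) is being used, both in interpreting the left-hand side when all $m_i = 0$ and in identifying $\sum_{i\in I}\log m_i$ with $\sum_{i=1}^{l}\log m_i$.
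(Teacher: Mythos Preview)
Your proof is correct and follows essentially the same approach as the paper: dispose of the all-zero case, then use the arithmetic chain $\sum m_i \leq l\cdot \max_i m_i \leq l\cdot \prod_i m_i$ and take logarithms. The paper simply says ``without loss of generality, assume $m_i>0$ for all $i$'' where you carry the index set $I$ explicitly, but the content is identical.
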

\begin{proof}
If $m_{i}=0$ for all $i$, then we are done. If not, without loss of generality, we assume $m_{i}>0$ for all i; we observe $\d \sum_{i=1}^{l}m_{i}\leq \big(\max_{i} m_{i}\big) \cdot l \leq \bigg( \prod_{i=1}^{l}m_{i} \bigg) \cdot l.$
\end{proof}

Now, we have the following key fact. 

\begin{lemma}\label{tama}
Suppose $\x(S)>1$. Let $x,y\in C(S)$. For all $n\geq N$, we have $$\log i(x,y)\stackrel{K(n), C(n)\cdot d_{S}(x,y)}{\leq} \G(n)$$ where $K(n)=6|\chi(S)| \cdot P(n)$ and $C(n)=6|\chi(S)| \cdot (Q(n)+1)$.
\end{lemma}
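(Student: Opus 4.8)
The goal is to bound $\log i(x,y)$ by $\G(n)$, the total sum of subsurface projection data on the complementary pieces along a geodesic $g_{x,y} = \{x_i\}$. The natural strategy is a telescoping argument over consecutive vertices of the geodesic, reducing each step to a lower-complexity surface where the inductive hypothesis (encoded in $P(n), Q(n)$) applies. First I would write $i(x,y) = i(x_0, y)$ and compare $i(x_{i-1}, y)$ with $i(x_i, y)$ for each $i$, so that $\log i(x,y) = \log i(x_{d-1}, y) + \sum_{i=1}^{d-1} \log \frac{i(x_{i-1},y)}{i(x_i,y)}$ where $d = d_S(x,y)$; since $d_S(x_{d-1}, y) = 1$ we have $i(x_{d-1}, y)$ bounded (disjoint curves meet $0$ times, but a geodesic endpoint at distance $1$ still has bounded intersection — more precisely we just absorb this into additive error, or note it is $0$). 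The key is therefore to bound $\log i(x_{i-1}, y) - \log i(x_i, y)$, or rather $\log i(x_{i-1}, y)$ in terms of intersection data visible on $S - x_i$.

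**Reducing one step to the complement.** For a fixed $i$, consider the subsurface $S - x_i$ (one or two components). Both $x_{i-1}$ and $y$ project to $S - x_i$ since $d_S(x_{i-1}, x_i) = 1$ and $d_S(x_i, y) = d - i \geq 1$. The plan is to apply the inductive Theorem \ref{E} (with constants $P(n), Q(n)$) on the component(s) of $S - x_i$ containing the relevant projections, bounding $\log i(\pi(x_{i-1}), \pi(y))$ in that complement by $\sum_{Z \subseteq S - x_i}[d_Z(x_{i-1},y)]_n + \sum_{A \subseteq S - x_i}\log[d_A(x_{i-1},y)]_n$, which is exactly $G_i(n)$. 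To connect $i(x_{i-1}, y)$ in $S$ with the intersection number of their projections in $S - x_i$, I would invoke Lemma \ref{i}: cutting along $x_i$, the arcs of $y$ in $S - x_i$ become arcs, and $\pi_{S-x_i}$ turns arcs into curves with $i(a, \pi(b)) \asymp i(a,b)$ up to the factor $2$; the subtlety is that $x_{i-1} \cap (S-x_i)$ is itself a union of arcs, not a single curve, so I would apply Lemma \ref{kp} to control the number of arc components (at most $6|\chi(S)|$, which is where the factor $K(n) = 6|\chi(S)| \cdot P(n)$ comes from) and Lemma \ref{algebra} to pass the $\log$ through the sum over those components, contributing the $+\log(6|\chi(S)|)$ that gets absorbed into $C(n)$. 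The factor $Q(n)+1$ in $C(n)$ accounts for the additive constant of the inductive step plus slack from the arc-to-curve comparison and the bounded term $\log i(x_{d-1},y)$.

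**Summing.** After the per-step bound $\log i(x_{i-1}, y) \stackrel{K(n), C(n)}{\leq} G_i(n) + \log i(x_i, y)$ (schematically — the precise form is $\log i(x_{i-1},y) - \log i(x_i,y) \leq K(n) G_i(n) + C(n)$), summing over $i = 1, \dots, d-1$ telescopes the left side to $\log i(x,y) - \log i(x_{d-1}, y)$ and the right side to $K(n) \G(n) + C(n)(d-1)$, giving $\log i(x,y) \leq K(n)\G(n) + C(n)\cdot d_S(x,y)$ after moving the bounded term $\log i(x_{d-1},y)$ into the additive part. This matches the claimed statement with $K(n) = 6|\chi(S)| P(n)$ and $C(n) = 6|\chi(S)|(Q(n)+1)$.

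**Main obstacle.** The delicate point is the per-step comparison between $i(x_{i-1}, y)$ computed in $S$ and the intersection numbers of the projected curves in $S - x_i$. One must be careful that $x_{i-1} \cap (S - x_i)$ is a collection of essential arcs (true by minimal position), that each such arc, after $p_{S-x_i}$, becomes a curve whose intersection with $y$'s projected curve dominates $i(\text{arc}, y\text{-arc})$ — this is precisely Lemma \ref{i} applied with roles possibly reversed — and that summing the intersection numbers of $y$ with all the $\leq 6|\chi(S)|$ projected arc-curves of $x_{i-1}$ recovers (up to a multiplicative constant) $i(x_{i-1}, y)$ in $S$. Handling the case where $x_i$ is separating versus non-separating, and ensuring that projections to components not meeting $x_{i-1}$ or $y$ vanish (so that $G_i(n)$ really is the right quantity), as in the proof of Lemma \ref{tamas}, is the bookkeeping that needs care but should go through cleanly.
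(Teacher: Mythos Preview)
Your proposal follows essentially the same strategy as the paper: telescope $\log i(x,y)$ along the geodesic, bound each increment $\log i(x_{i-1},y) - \log i(x_i,y)$ by the subsurface-projection sum $G_i(n)$ on $S - x_i$ via the inductive hypothesis on the lower-complexity complement, then sum. The ingredients you name --- Lemma~\ref{kp} for the bound $6|\chi(S)|$ on the number of projected curves, Lemma~\ref{i} to compare arc--curve intersection with the projected curve--curve intersection, Lemma~\ref{algebra} to pass $\log$ through the sum, and Lemma~\ref{tamas} to deal with the tail --- are exactly the ones the paper uses.

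There is, however, a concrete slip in your ``main obstacle'' paragraph: you write that ``$x_{i-1}\cap(S-x_i)$ is itself a union of arcs, not a single curve.'' This is false. Since $d_S(x_{i-1},x_i)=1$, the curves $x_{i-1}$ and $x_i$ are disjoint, so $x_{i-1}$ sits as a \emph{curve} in a single component $S'\subseteq S-x_i$. It is $y$ (at distance $\geq 2$ from $x_i$, hence meeting it) that is cut into arcs $a_j$ in $S'$; these are then projected to curves $c_j\in\pi_{S'}(y)$, of which there are at most $6|\chi(S)|$ by Lemma~\ref{kp}. The paper's Step~1 exploits precisely this asymmetry: the multiplicities $n_j$ of the arc types $a_j$ satisfy $n_j \leq i(x_i,y)$, whence
\[
i(x_{i-1},y)=\sum_j n_j\,i(x_{i-1},a_j)\leq i(x_i,y)\cdot\sum_j i(x_{i-1},c_j).
\]
Because $x_{i-1}$ is already a curve in $S'$, the inductive bound (with constants $P(n),Q(n)$) applies directly to each $\log i(x_{i-1},c_j)$; had $x_{i-1}$ genuinely been cut into arcs you would need a further projection and a more delicate comparison, and the stated constants would not come out. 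Once you swap the roles of $x_{i-1}$ and $y$ in that sentence, your outline matches the paper's proof. (A small further point: the per-step argument as written does not literally apply at $i=d_S(x,y)-1$, since $y$ misses $x_{d-1}$ and yields no arcs; the paper therefore stops the telescope one step earlier and invokes Lemma~\ref{tamas} for $\log i(x_{d-2},y)\prec G_{d-1}(n)$.)
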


\begin{proof}
Let $g_{x,y} = \{x_{i}\}$ such that $d_{S}(x,x_{i}) = i$ for all $i$. Let $S'$ be a complementary component of $x_{i+1}$ such that $x_{i}\in C(S')$. Recall Lemma \ref{kp}, we have $$|i_{S'}( y)=\{a_{j}\}|\leq 3|\chi(S)| \text{ and } |\pi_{S'}( y)=\{c_{j}\}|\leq 6|\chi(S)|.$$ 
Let $n_{j}$ be the number of arcs obtained by $y\cap S'$ which are isotopic to $a_{j}$.

We first state the main steps of the proof.
\begin{itemize}
\item \textbf{Step 1 (Topological step):}
By making topological observations, we show 
$$\log i(x_{i},y)-\log i(x_{i+1},y)\prec \sum_{j} \log i(x_{i},c_{j}) \text{ for all }i<d_{S}(x,y)-2.$$
\item \textbf{Step 2 (Inductive step by complexity):}
By using inductive hypothesis on the complexity, we show $$ \sum_{j} \log i(x_{i},c_{j})\prec G_{i+1}(n) \text{ for all }i<d_{S}(x,y)-2.$$
\item \textbf{Step 3 (Deriving $K$ and $C$):}
By the same proof of Lemma \ref{tamas}, we have $$\d \log i(x_{d_{S}(x,y)-2},y)\prec G_{d_{S}(x,y)-1}(n).$$ With this observation and the results from Step 1 and Step 2, we derive $K$ and $C$.
\end{itemize}

Now, we start the proof. For Step 1 and Step 2, we assume $i=0$ for simplicity; the same proof works for all $i<d_{S}(x,y)-2.$ Also, by abuse of notation, we denote $P(n),Q(n)$ by $P,Q$ respectively for the rest of the proof.

\underline{\textbf{Step 1:}}
We first observe 
\begin{itemize}
\item $\displaystyle  i(x,y)=\sum_{j} n_{j} \cdot i(x,a_{j})$ by the definitions of $a_{j}$ and $n_{j}$. 
\item $\displaystyle i(x_{1},y)=\sum_{j} n_{j}$ when $x_{1}$ does not separate $S$ and $\displaystyle i(x_{1},y) = 2\cdot  \sum_{j} n_{j}$ when $x_{1}$ separates $S$. 
\end{itemize} 
Therefore, we have 
\begin{eqnarray*}
&&i(x,y)=\sum_{j} n_{j} \cdot i(x,a_{j}). \\\\& \stackrel{n_{j}\leq i(x_{1},y)} {\Longrightarrow}& i(x,y)\leq i(x_{1},y) \cdot \Big(\sum_{j}   i(x,a_{j})\Big).\\\\&\stackrel{\text {Lemma \ref{i}}} {\Longrightarrow}&i(x,y)\leq i(x_{1},y) \cdot \Big( \sum_{j} i(x,c_{j}) \Big).\\\\&\Longrightarrow&\log i(x,y)-\log i(x_{1},y) \leq   \log \Big( \sum_{j} i(x,c_{j}) \Big).
\end{eqnarray*}
With Lemma \ref{algebra}, we have
\begin{equation}\label{(2)}
 \log i(x,y)-\log i(x_{1},y) \leq   \log \Big( \sum_{j} i(x,c_{j}) \Big) \leq \sum_{j} \log  i(x,c_{j})+6|\chi(S)|.
\end{equation}

\underline{\textbf{Step 2:}}
Since $x,c_{j}$ are contained in the same complementary component of $x_{1}$, as in the proof of Lemma \ref{tamas}, we can use the inductive hypothesis on the complexity for all $n\geq N$. We have
$$\log i(x,c_{j}) \stackrel{P,  Q }{\leq} \sum_{Z\subseteq S-x_{1}}[ d_{Z}(x,c_{j})]_{n}+\sum_{A\subseteq S-x_{1}} \log [d_{A}(x, c_{j})]_{n}\text{ for all }j.$$
Since the right--hand side of the above is bounded by $G_{1}(n)$ and $|\{c_{j}\}|\leq  6|\chi(S)|$, we have $$ \sum_{j} \log i(x,c_{j}) \stackrel{6|\chi(S)| \cdot P, 6|\chi(S)| \cdot Q  }{\leq} G_{1}(n).$$
Take $P' = 6|\chi(S)| \cdot P$ and $Q'=6|\chi(S)| \cdot (Q+1)$; with (5.1), we have  

\begin{equation}\label{(2)}
 \log i(x,y)- \log i(x_{1},y) \stackrel{P', Q' }{\leq} G_{1}(n).
\end{equation}

\underline{ \textbf{Step 3:}}
The same arguments on the previous steps yield the desired inequality, that is (5.2), $\text{for all }i<d_{S}(x,y)-2$. Namely, we have   
$$\log i(x_{i},y)-\log i(x_{i+1},y) \stackrel{P', Q' }{\leq}G_{i+1}(n)\text{ for all }i<d_{S}(x,y)-2.$$
Therefore, we have 
$$ \sum_{i=0}^{d_{S}(x,y)-3} \log i(x_{i},y)-\log i(x_{i+1},y)\stackrel{P', Q'\cdot (d_{S}(x,y)-2) }{\leq}\sum_{i=0}^{d_{S}(x,y)-3}G_{i+1}(n),$$
which is that
\begin{eqnarray}
\log i(x,y)-\log i(x_{d_{S}(x,y)-2},y) \stackrel{P', Q'\cdot (d_{S}(x,y)-2) }{\leq} \G(n)-G_{d_{S}(x,y)-1}(n).
\end{eqnarray}

Apply the same proof of Lemma \ref{tamas} on $\log i(x_{d_{S}(x,y)-2},y)$, and obtain 
\begin{equation}
\d \log i(x_{d_{S}(x,y)-2},y)\stackrel{P, Q}{\leq} G_{d_{S}(x,y)-1}(n).
\end{equation}
Since $P'\geq P$ and $Q'\geq Q$, by (5.3) and (5.4) we have $$\displaystyle  \log i(x,y) \stackrel{P', Q'\cdot d_{S}(x,y)}\leq \G(n).$$ 

Lastly, we let $K=P'=6|\chi(S)| \cdot P$ and $C=Q'=6|\chi(S)| \cdot (Q+1)$, and we are done.
\end{proof}

We obtain the following important corollary; once we have it, we compute the constants, and that is Theorem \ref{E}. 
\begin{corollary}\label{last}
Suppose $\x(S)>1$. Let $x,y\in C(S)$. For all $k> N-M$, we have $$\log i(x,y) \leq A(k) \cdot  \bigg(   \sum_{Z\subseteq S}[ d_{Z}(x,y)]_{k}+\sum_{A\subseteq S} \log [d_{A}(x,y)]_{k} \bigg)+ B(k)$$ where $A(k)=\max\{6 M\cdot K(k+M),C(k+M) \}$ and $B(k)=k\cdot C(k+M).$

\end{corollary}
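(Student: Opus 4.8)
The plan is to deduce the corollary directly from Lemma \ref{tama} together with the Bounded Geodesic Image Theorem, by trading the "relative to $x_i$" subsurface sum $\G(n)$ appearing in Lemma \ref{tama} for the genuine subsurface sum over $S$, at the cost of inflating the cut-off constant from $n$ to $n-M$. Concretely, I would fix $x,y\in C(S)$ and a geodesic $g_{x,y}=\{x_i\}$ with $d_S(x,x_i)=i$, apply Lemma \ref{tama} with $n=k+M$ (legitimate since $k>N-M$ forces $n\ge N$), and then estimate $\G(k+M)$ from above by a uniform multiple of $\sum_{Z\subseteq S}[d_Z(x,y)]_k+\sum_{A\subseteq S}\log[d_A(x,y)]_k$ plus a term linear in $d_S(x,y)$.

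The heart of the argument is this comparison step. Each term of $\G(k+M)$ is of the form $[d_Z(x_{i-1},y)]_{k+M}$ (or its $\log$, for annuli $A$) with $Z\subseteq S-x_i$, hence $Z\subsetneq S$. First I would use that $\pi_Z(x_{i-1})\neq\emptyset$ and $\pi_Z(x_i)\neq\emptyset$ (since $x_i=\partial(S-x_i)\supseteq\partial Z$ contributes, and $x_{i-1}\in C(S-x_i)$), so by Theorem \ref{BGIT} applied to a geodesic segment from $x_{i-1}$ (or $x$) to $x_i$ we get $d_Z(x,x_{i-1})\le$ some bounded amount; more importantly, the triangle inequality for $d_Z$ gives
$$d_Z(x_{i-1},y)\le d_Z(x_{i-1},x)+d_Z(x,y)\le M+d_Z(x,y),$$
where $d_Z(x,x_{i-1})\le M$ holds whenever $\pi_Z(x_j)\neq\emptyset$ for all $0\le j\le i-1$; the potentially empty-projection case is handled exactly as in Lemma \ref{sss} (if some $x_j$ with $j<i-1$ fails to project to $Z$ then $x_j$ and $x_i$ are too close in $C(S)$ to have disjoint-ish projections, contradicting that both lie near $Z\subseteq S-x_i$, or one argues via $d_S$). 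Consequently $[d_Z(x_{i-1},y)]_{k+M}\le [d_Z(x,y)+M]_{k+M}$, and when the left side is nonzero we have $d_Z(x,y)>k$, so $d_Z(x,y)+M\le d_Z(x,y)+M\le$ a bounded multiple of $d_Z(x,y)$; crudely, $d_Z(x,y)+M\le 2d_Z(x,y)$ when $d_Z(x,y)\ge M$ — and $d_Z(x,y)>k>N-M\ge 0$, so after possibly enlarging constants one gets $[d_Z(x_{i-1},y)]_{k+M}\le 6M\cdot[d_Z(x,y)]_k$ (the factor absorbing the additive $M$ and the slack between the two cut-offs). The annular terms are handled the same way with a $\log$ applied, noting $\log(d_A(x,y)+M)\le \log(6M\cdot d_A(x,y))$ and $\log(6M)$ is absorbed into the additive constant. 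Finally, for each fixed $Z\subsetneq S$, the index $i$ for which $Z\subseteq S-x_i$ ranges over at most... well, $Z$ can sit inside $S-x_i$ for several $i$, so the same subsurface $Z$ may be counted up to $d_S(x,y)$ times; this multiplicity is exactly what produces the $k\cdot C(k+M)$-type additive term and is why the statement carries $B(k)=k\cdot C(k+M)$ rather than a constant. I would bound the number of repeated contributions of a single nonzero $[d_Z(x,y)]_k$ by the diameter bound (if $Z\subseteq S-x_i$ and $Z\subseteq S-x_j$ then $x_i,x_j$ both miss $Z$, forcing $|i-j|$ bounded via Lemma \ref{sss}/Theorem \ref{BGIT}), and bound the "genuinely new" zero-cutoff contributions by $d_S(x,y)$ many copies of the additive constant.

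Assembling: Lemma \ref{tama} with $n=k+M$ gives $\log i(x,y)\le K(k+M)\cdot\G(k+M)+C(k+M)\cdot d_S(x,y)$; the comparison step gives $\G(k+M)\le 6M\cdot\big(\sum_{Z\subseteq S}[d_Z(x,y)]_k+\sum_{A\subseteq S}\log[d_A(x,y)]_k\big)+(\text{const})\cdot d_S(x,y)$; and Lemma \ref{basic} gives $d_S(x,y)\le 2\log i(x,y)+2$, hence $d_S(x,y)$ is itself $\prec \sum_{Z\subseteq S}[d_Z(x,y)]_k+\sum_{A}\log[d_A(x,y)]_k$ plus a constant — actually one should instead bound $d_S(x,y)\le \sum_{Z\subseteq S}[d_Z(x,y)]_k+\text{(number of short-projection subsurfaces)}$, but the cleanest route is: either $d_S(x,y)\le k$ (then every additive contribution is swallowed into $B(k)=k\cdot C(k+M)$), or $d_S(x,y)>k$ so $[d_S(x,y)]_k=d_S(x,y)\ge 1$ contributes to the main sum and we can charge the linear-in-$d_S$ terms against it. Collecting constants yields the multiplicative constant $A(k)=\max\{6M\cdot K(k+M),C(k+M)\}$ and additive constant $B(k)=k\cdot C(k+M)$.

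The main obstacle is the bookkeeping in the comparison step: correctly controlling (i) the empty-projection cases when transporting $d_Z(x_{i-1},y)$ back to $d_Z(x,y)$ — this is where Lemma \ref{sss} (or a direct $d_S$ argument) is essential and where one must be careful that $Z\subseteq S-x_i$ genuinely forces $\pi_Z(x_i)\neq\emptyset$ — and (ii) the multiplicity with which a single subsurface $Z\subsetneq S$ is charged across different indices $i$, which must be bounded to keep the multiplicative constant from depending on $d_S(x,y)$; the linear-in-$d_S(x,y)$ additive slack is precisely what the statement absorbs into $B(k)=k\cdot C(k+M)$ via the trivial case split on whether $d_S(x,y)\le k$.
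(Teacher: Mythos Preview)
Your overall strategy is exactly the paper's: apply Lemma~\ref{tama} at $n=k+M$, replace each $d_Z(x_{i-1},y)$ by $d_Z(x,y)$ using Lemma~\ref{sss}, bound the multiplicity with which a fixed $Z\subsetneq S$ appears in $\G$, and finish by splitting on whether $d_S(x,y)\le k$. However, several details in your write-up are off and one of them is load-bearing.

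First and most importantly, you must choose $g_{x,y}$ to be a \emph{tight} geodesic; you invoke Lemma~\ref{sss}, which requires tightness, but never say so. Without tightness the comparison $d_Z(x,x_{i-1})\le M$ can genuinely fail: the scenario $\pi_Z(x_{i-2})=\emptyset$ is perfectly consistent with $Z\subseteq S-x_i$ (it only forces $|i-(i-2)|\le 2$), and then BGIT alone gives no control on $d_Z(x,x_{i-1})$. The paper's clean one-line argument is: if $[d_W(x_{i-1},y)]_{k+M}>0$ then $d_W(x_{i-1},y)>M$, so the second alternative in Lemma~\ref{sss} (applied at index $i-1$ along the tight geodesic) is excluded and $d_W(x,x_{i-1})\le M$.

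Second, your aside that $\pi_Z(x_i)\neq\emptyset$ because ``$x_i=\partial(S-x_i)\supseteq\partial Z$'' is backwards: $Z\subseteq S-x_i$ means precisely that $x_i$ misses $Z$, so $\pi_Z(x_i)=\emptyset$. This is not load-bearing for the argument but should be corrected.

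Third, the multiplicity bound does not come from Lemma~\ref{sss} or BGIT. If $Z\subseteq S-x_i$ and $Z\subseteq S-x_j$ then any curve in $Z$ is disjoint from both $x_i$ and $x_j$, so $d_S(x_i,x_j)\le 2$ by Lemma~\ref{distance3}; hence each proper $Z$ appears for at most three indices $i$, giving $\H(k)\le 3\big(\sum_{Z\subsetneq S}[d_Z(x,y)]_k+\sum_{A\subsetneq S}\log[d_A(x,y)]_k\big)$. Together with $G_i(k+M)\le 2M\cdot H_i(k)$ this produces the factor $6M=2M\cdot 3$ in $A(k)$; your vaguer ``bounded multiplicity plus linear-in-$d_S$ slack'' is not needed once this sharp bound of $3$ is in place.
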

\begin{proof}
Let $g_{x,y} = \{x_{i}\}$ such that $d_{S}(x,x_{i}) = i$ for all $i$. We define
\begin{itemize}
\item $\displaystyle H_{i}(n)=\sum_{Z\subseteq S-x_{i}} [ d_{Z}(x,y)]_{n}+\sum_{A\subseteq S-x_{i}} \log [d_{A}(x,y)]_{n}.$
\item $\d \H(n)=\sum_{i=1}^{d_{S}(x,y)-1} H_{i}(n).$
\end{itemize}

For the rest of the proof, we assume $g_{x,y}$ is a tight geodesic.

Recall Definition \ref{definition}, we first show $\displaystyle  G_{i}(k+M) \leq 2M\cdot H_{i}(k) \text{ for all } i.$ Suppose $W\subseteq S-x_{i}$ such that $[d_{W}(x_{i-1},y)]_{k+M}>0$; then we have $d_{W}(x,x_{i-1}) \leq M$ by tightness, i.e., Lemma \ref{sss}. Therefore, we have $[d_{W}(x,y)]_{k}>0$; in particular, we have
 
\begin{itemize}
\item $[d_{W}(x_{i-1},y)]_{k+M} \leq 2M\cdot [d_{W}(x,y)]_{k}.$
\item $\log [d_{W}(x_{i-1},y)]_{k+M}  \leq 2M\cdot \log [d_{W}(x,y)]_{k}.$
\end{itemize}
Thus, we obtain
$$G_{i}(k+M) \leq 2M\cdot H_{i}(k)  \text{ for all }  i. \Longrightarrow \G(k+M) \leq 2M\cdot  \H(k).$$

Suppose $W \subseteq S-x_{i}$; then $W$ can be contained in the compliment of at most three vertices (including $x_{i}$) in $g_{x,y}$ by Lemma \ref{distance3}. Therefore, we have 
$$\d \H(k) \leq 3\cdot \bigg(\sum_{Z\subsetneq S}[ d_{Z}(x,y)]_{k}+\sum_{A\subsetneq S} \log [d_{A}(x,y)]_{k} \bigg).$$ 

Lastly, since $k+M\geq N$, we can apply Lemma \ref{tama} on $\log i(x,y)$; by abuse of notation, we denote $K(k+M),C(k+M)$ by $K,C$ respectively for the rest of the proof. We have $$\log i(x,y)\leq K\cdot \G(k+M)+ C\cdot d_{S}(x,y).$$ 

All together, we have 
\begin{eqnarray*}
\log i(x,y) &\leq& K\cdot \G(k+M)+ C\cdot d_{S}(x,y) \\\\&\leq& 2M\cdot K\cdot  \H(k)+ C\cdot d_{S}(x,y) \\\\&\leq& 6M\cdot K\cdot \bigg(\sum_{Z\subsetneq S}[ d_{Z}(x,y)]_{k}+\sum_{A\subsetneq S} \log [d_{A}(x,y)]_{k} \bigg) +  C\cdot d_{S}(x,y). 
\end{eqnarray*}
If $[d_{S}(x,y)]_{k}>0$, then we have $$\d \log i(x,y) \leq \max\{6M\cdot K,C\} \cdot  \bigg(   \sum_{Z\subseteq S}[ d_{Z}(x,y)]_{k}+\sum_{A\subseteq S} \log [d_{A}(x,y)]_{k} \bigg).$$
If $[d_{S}(x,y)]_{k}=0$, then we have $$\d \log i(x,y) \leq 6M\cdot K \cdot  \bigg(   \sum_{Z\subseteq S}[ d_{Z}(x,y)]_{k}+\sum_{A\subseteq S} \log [d_{A}(x,y)]_{k} \bigg)+ k\cdot C.$$

\end{proof}

We observe from Corollary \ref{last} that we can take any positive integer as the minimum cut--off constant for $\x(S)>1$. Now, we complete the proof of Theorem \ref{E}.

\begin{theorem}[Effective version of Corollary \ref{last}]\label{CE}
Suppose $\x(S)>1$. Let $x,y\in C(S)$. For all $k> 0$, we have $$\log i(x,y)\leq V(k) \cdot \bigg( \sum_{Z\subseteq S}[ d_{Z}(x,y)]_{k}+\sum_{A\subseteq S} \log [d_{A}(x,y)]_{k}\bigg) + V(k)$$ where 
$V(k)=\big( M^{2}|\chi(S)| \cdot (k+\x(S)\cdot M)  \big)^{\x(S)+2} .$
\end{theorem}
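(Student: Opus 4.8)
The plan is to turn the qualitative statement of Corollary \ref{last} into explicit constants by unwinding the recursion that defines $P_{S_{g,n}}(k)$, $Q_{S_{g,n}}(k)$, and $N_{S_{g,n}}$ through all complexities below $\x(S)$, starting from the base case $\x=1$ given by Theorem \ref{e}. First I would set up the induction on $\x(S)$: the base case $\x(S)=1$ is Theorem \ref{E''}/Theorem \ref{e}, where we may take multiplicative and additive constants $U^{+}=k^{3}$ (valid for $k\geq M$) so that, crudely, $P_{S}(k),Q_{S}(k)\leq k^{3}$ and $N_{S}=M$; this matches the claimed formula $V(k)$ when $\x(S)=1$ since $(M^{2}\cdot 1\cdot(k+M))^{3}$ dominates $k^{3}$. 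For the inductive step, assume the bound $V(k')$ holds for every proper subsurface $S_{g,n}$ with $\x(S_{g,n})<\x(S)$; then $P(k),Q(k),N$ from Definition \ref{definition} are all controlled by $V_{<}(k):=\big(M^{2}|\chi(S)|\cdot(k+(\x(S)-1)M)\big)^{\x(S)+1}$ (using $|\chi(S_{g,n})|\leq|\chi(S)|$ and $\x(S_{g,n})\leq\x(S)-1$ for proper subsurfaces, so the exponent drops by one and the constant $M^{2}|\chi|$ does not grow), and in particular $N\leq V_{<}(M)$.

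Next I would feed these into Corollary \ref{last}. There $A(k)=\max\{6M\cdot K(k+M),\,C(k+M)\}$ and $B(k)=k\cdot C(k+M)$, where from Lemma \ref{tama} $K(n)=6|\chi(S)|\cdot P(n)$ and $C(n)=6|\chi(S)|\cdot(Q(n)+1)$. Substituting $n=k+M$ and the inductive bounds $P(k+M),Q(k+M)\leq V_{<}(k+M)$, we get $A(k)\leq 6M\cdot 6|\chi(S)|\cdot V_{<}(k+M)\leq 36M|\chi(S)|\cdot V_{<}(k+M)$ and $B(k)\leq k\cdot 6|\chi(S)|\cdot(V_{<}(k+M)+1)$. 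The arithmetic to check is then simply that $\max\{A(k),B(k)\}\leq V(k)=\big(M^{2}|\chi(S)|\cdot(k+\x(S)M)\big)^{\x(S)+2}$: the extra factor of $36M|\chi(S)|$ (and the $k$ in $B$) is absorbed by bumping the base from $M^{2}|\chi(S)|\cdot(\cdots)$ to one higher power, since $M=200$ makes $M^{2}|\chi(S)|\geq 36M|\chi(S)|$, and replacing $k+(\x(S)-1)M$ inside $V_{<}(k+M)$'s argument — which is $k+M+(\x(S)-1)M=k+\x(S)M$ — matches the argument of $V(k)$ exactly. One also checks the cut-off: Corollary \ref{last} requires $k>N-M$, and since $N\leq V_{<}(M)\leq V(M)$ we may as well state it for all $k>0$ after noting that any positive integer works as a minimum cut-off (as remarked just before Theorem \ref{CE}); for $k$ below the nominal threshold one replaces $k$ by a larger admissible value, only increasing $V(k)$.

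The main obstacle I expect is bookkeeping the constant-chasing so that the single clean closed form $V(k)$ genuinely majorizes every intermediate constant at every complexity level — in particular verifying that the exponent grows by exactly one per induction step (giving $\x(S)+2$ overall, with the $+2$ coming from the base exponent $3$ at $\x=1$) and that the multiplicative junk $6M$, $6|\chi(S)|$, the $+1$ additive slack, and the shift $k\mapsto k+M$ at each level all fit inside the gap between consecutive powers of $M^{2}|\chi(S)|\cdot(k+\x(S)M)$. This is purely a monotonicity-and-inequalities verification with no new ideas, but it must be done carefully since the bound is asserted to be fully effective. I would organize it as: (i) restate the base case constants; (ii) state the inductive hypothesis as "$P_{S_{g,n}}(k),Q_{S_{g,n}}(k)\leq V_{S_{g,n}}(k)$ and $N_{S_{g,n}}\leq V_{S_{g,n}}(M)$ for all $S_{g,n}$ of smaller complexity"; (iii) plug into $A(k),B(k)$ from Corollary \ref{last}; (iv) conclude $\max\{A(k),B(k)\}\leq V(k)$ by the elementary inequality $36M|\chi(S)|\cdot\big(M^{2}|\chi(S)|\cdot(k+\x(S)M)\big)^{\x(S)+1}\leq \big(M^{2}|\chi(S)|\cdot(k+\x(S)M)\big)^{\x(S)+2}$, which holds because $36M\leq M^{2}$ and $|\chi(S)|\leq M^{2}|\chi(S)|(k+\x(S)M)$; and (v) handle the cut-off constant remark. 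This completes the induction and hence the proof of Theorem \ref{E}.
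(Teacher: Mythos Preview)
Your proposal is correct and follows essentially the same approach as the paper: both simplify $A(k)$ and $B(k)$ from Corollary~\ref{last} via the formulas $K(n)=6|\chi(S)|P(n)$ and $C(n)=6|\chi(S)|(Q(n)+1)$ from Lemma~\ref{tama}, then unwind the recursion on complexity starting from the base constants $k^{3}$ at $\x=1$; the paper's proof simply writes ``one can check'' for the final inductive inequality, whereas you actually carry out that check (noting in particular that the shift $k\mapsto k+M$ turns $k+(\x(S)-1)M$ into $k+\x(S)M$ and that $36M\leq M^{2}$ absorbs the junk). Your handling of the cut-off is slightly muddled---replacing $k$ by a larger value would shrink the sum, not help---but the paper's own remark (that $N=M$ always, so $k>N-M=0$) already gives the correct reason, which you also cite.
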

\begin{proof}
Recall Corollary \ref{last}; $A(k)=\max\{6M\cdot K(k+M),C(k+M) \}$ and $B(k)=k\cdot C(k+M).$
Combining with Lemma \ref{tama}, we have 
\begin{itemize}
\item$6M\cdot K(k+M)=36M |\chi(S)| \cdot P(k+M) \leq M^{2}|\chi(S)| \cdot P(k+M).$ 
\item $C(k+M)=6|\chi(S)| \cdot (Q(k+M)+1)\leq  M|\chi(S)| \cdot Q(k+M)$.  
\end{itemize}
Therefore, it suffices to let 
\begin{itemize}
\item $A(k)=\max \{  M^{2}|\chi(S)| \cdot P(k+M), M^{2}|\chi(S)| \cdot Q(k+M)\} $
\item  $B(k)=k \cdot M^{2}|\chi(S)| \cdot Q(k+M)$ 
\end{itemize}
in Corollary \ref{last}.

Recall Theorem \ref{E''}; the multiplicative and additive constants can be taken to be $k^{3}$ when $\x(S)=1$. Hence, it suffices to understand the growth of $B(k)$. One can check $$B(k)\leq \big( M^{2}|\chi(S)| \cdot (k+\x(S)\cdot M)  \big)^{\x(S)+2}. $$
\end{proof}

\section{Application}
We show that given $x,y\in C(S)$, if $\{x_{i}\}$ is a tight multigeodesic between $x$ and $y$ such that $d_{S}(x,x_{i})=i$ for all $i$, then $i(x_{i+1},y)$ decreases from $i(x_{i},y)$ with a multiplicative factor for all $i$ where the multiplicative constant depends only on the surface. Indeed, this type of result has been obtained before by Shackleton; we recall Shackleton's result from \cite{SHA}. Let $F:\mathbb{N} \rightarrow \mathbb{N}$, $$F(n)=n \cdot T^{\lfloor 2 \log n \rfloor}$$ where $T=4^{5} \cdot \x(S)^{3}$. Let $F^{j}=\underbrace{F\circ F\circ \cdots \circ F}_{j \text{ many } F \text{'s} }.$

Shackleton showed 

\begin{theorem}[\cite{SHA}]\label{tabi}
Suppose $\x(S)>1$. Let $x,y\in C(S)$ and $g_{x,y} = \{x_{i}\}$ be a tight multigeodesic such that $d_{S}(x,x_{i}) = i$ for all $i$, then 
$$ i(x_{i},y) \leq F^{i}(i(x,y))  \text{ for all }i.$$
\end{theorem}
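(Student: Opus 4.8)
The statement reduces to the one-step estimate $i(x_{i+1},y)\le R\cdot i(x_i,y)$: iterating it from $i(x_0,y)=i(x,y)$ yields $i(x_i,y)\le R^{i}\, i(x,y)$ for every $i$. This estimate is automatic once $i(x_{i+1},y)=0$, which happens for $i\ge d_S(x,y)-2$ because $i(x_{d_S(x,y)-1},y)=i(x_{d_S(x,y)},y)=0$; so only the range $i<d_S(x,y)-2$ requires an argument. When $\x(S)=1$ even this range is already handled by Lemma \ref{2}, which gives the stronger bound $i(x_{i+1},y)<\frac{2}{3}\, i(x_i,y)$; so from now on I would assume $\x(S)>1$.

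Fix $i<d_S(x,y)-2$ and let $W$ be a component of $S\setminus x_{i+1}$ meeting $x_i$. Since $d_S(x_{i+1},y)\ge 2$, the set $y\cap W$ is a nonempty union of arcs; let $\{a_k\}$ be their isotopy classes and $n_k$ the corresponding multiplicities, so that by Lemma \ref{kp} there are at most $3|\chi(S)|$ of them. Counting the arcs into which $x_{i+1}$ cuts $y$ gives $i(x_{i+1},y)\le 2\sum_k n_k$, while $i(x_i,y)\ge\sum_k n_k\, i(x_i,a_k)$. Hence the arcs $a_k$ with $i(x_i,a_k)\ge 1$ contribute at most $i(x_i,y)$ to $\sum_k n_k$, and the entire problem is to bound, for each of the at most $3|\chi(S)|$ \emph{bad} arcs $a_k$ (those disjoint from $x_i$), the multiplicity $n_k$ by $2^{V(M)}\, i(x_i,y)$; summing these bounds then produces $R=\x(S)\cdot 2^{V(M)}$.

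To bound one such multiplicity I would replace $a_k$ by the curve $c_k=p_W(a_k)$, realize its $n_k$ parallel copies inside a suitable proper subsurface $W_k\subsetneq S$ naturally attached to $x_i$ and $c_k$, and apply Theorem \ref{E} (or Theorem \ref{E''}) inside $W_k$, which bounds $n_k$ by $2^{V(M)}$ times a sum of subsurface projection distances in $W_k$. The crucial input is that tightness of $g_{x,y}$ forces each of these projections below the cut-off: by Lemma \ref{sss} together with Lemma \ref{distance3}, every proper subsurface projection of $x_i$ along the tight geodesic is $M$-close to the projection of $x$ or of $y$, Lemma \ref{oct} controls the jump from $x_i$ to $x_{i+1}$, and the only unbounded contribution is the ambient one, which is already absorbed into $i(x_i,y)$ by Lemma \ref{basic}. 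This reduction --- pinning down the correct subsurface $W_k$, carrying out the arc-versus-curve intersection bookkeeping of Lemma \ref{i} inside it, and checking that tightness genuinely keeps the relevant projections below $M$ --- is the step I expect to be the main obstacle; everything else is crude, which is exactly why one can afford the enormous constant $R=\x(S)\cdot 2^{V(M)}$ obtained by running Theorem \ref{E} with all projections set equal to the cut-off.
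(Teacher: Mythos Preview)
First, note that the paper does \emph{not} prove Theorem \ref{tabi} at all: that statement is quoted from Shackleton's paper \cite{SHA} purely for comparison. What the paper actually proves is the stronger Theorem \ref{ANA}, namely the linear one-step estimate $i(x_{i+1},y)\le R\cdot i(x_i,y)$ with $R=\x(S)\cdot 2^{V(M)}$, and your proposal is really an attempt at that theorem. So the right thing to compare against is the proof of Theorem \ref{ANA}.

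Your overall strategy and the paper's agree up to the point where you choose which multicurve to cut along, and there you make the opposite choice: you cut along $x_{i+1}$ and study arcs of $y$ in a component $W$ of $S-x_{i+1}$, whereas the paper cuts along $x_i$ and studies arcs of $y$ in a component $S'$ of $S-x_i$ containing a given $b\in x_{i+1}$. This difference is not cosmetic. With the paper's choice, the quantity one must bound is $i(b,a_t)$ for a curve $b\in C(S')$ and an arc $a_t$ of $y\cap S'$; Lemma \ref{i} promotes $a_t$ to a curve $A_t\in C(S')$ with $i(b,a_t)\le i(b,A_t)$, and Theorem \ref{E} applied in $S'$ bounds $\log i(b,A_t)$ by $V(M)$ times the projection sum. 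Tightness enters cleanly: since $\pi_{Z}(x_i)=\emptyset$ for any $Z\subseteq S'$, tightness forces $\pi_{Z}(x_{i+2})\neq\emptyset$, and then Lemma \ref{distance3} plus Theorem \ref{BGIT} give $d_Z(x_{i+1},y)\le M$ for every $Z\subseteq S'$; hence $[d_Z(b,A_t)]_M=0$ for all such $Z$, so $i(b,a_t)\le 2^{V(M)}$. Finally, the \emph{number} of arcs $a_t$ is at most $i(x_i,y)$ because every arc of $y\cap S'$ has its endpoints on $x_i$; summing over $b\in x_{i+1}$ contributes the factor $\x(S)$.

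Your cut along $x_{i+1}$ instead produces the ``bad arc'' problem you identify, and this is a genuine gap rather than a routine bookkeeping step. The quantity you need to bound is a \emph{multiplicity} $n_k$, not an intersection number of two curves, so Theorem \ref{E} does not apply as stated: you never specify two curves $p,q$ in a subsurface $W_k$ with $n_k\le i(p,q)$. Your description (``realize its $n_k$ parallel copies inside a suitable proper subsurface $W_k$ \ldots\ and apply Theorem \ref{E}'') does not pin down such $p,q$, and it is not clear that tightness bounds the relevant projections once you do. Moreover the invocation of Lemma \ref{basic} to ``absorb the ambient contribution into $i(x_i,y)$'' does not fit: Lemma \ref{basic} goes the wrong direction for that purpose. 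The fix is simply to reverse your cut: work in $S-x_i$ rather than $S-x_{i+1}$, and the bad-arc problem disappears entirely.
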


We improve on Shackleton's result; we show that $F$ in the above can be taken to be a linear function $G:\mathbb{N} \rightarrow \mathbb{N}$, $$G(n)=R\cdot n$$ where $R=\x(S)\cdot  2^{V(M)}$.

We show
\begin{theorem}
Suppose $\x(S)\geq1$. Let $x,y\in C(S)$ and $g_{x,y} = \{x_{i}\}$ be a tight multigeodesic such that $d_{S}(x,x_{i}) = i$ for all $i$, then
$$ i(x_{i},y) \leq R^{i}\cdot i(x,y)  \text{ for all }i.$$
\end{theorem}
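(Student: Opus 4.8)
The plan is to prove this by a one-step estimate: show that $i(x_{i+1},y) \leq R \cdot i(x_i,y)$ for every $i$ with $0 < i+1 < d_S(x,y)$, and then iterate from $i = 0$ to obtain $i(x_i,y) \leq R^i \cdot i(x,y)$. (The trivial case $i = d_S(x,y)$ needs a separate trivial remark, or one notes $i(x_j,y)$ is nondecreasing in reverse once $j$ reaches the endpoint, but really the interesting content is the one-step bound.) Since we are handed Lemma \ref{2} for the case $\x(S)=1$ (which gives the even stronger ratio bound $i(x_{i-1},y)/i(x_i,y) > 3/2$, hence $i(x_i,y) \le (2/3)^i i(x,y) \le R^i i(x,y)$ since $R \geq 1$), I would dispatch $\x(S)=1$ immediately by citing Lemma \ref{2}, and concentrate on $\x(S) > 1$.

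For $\x(S)>1$, the key is to relate $i(x_i,y)$ to the subsurface projection data of the pair $(x_i,y)$ via Theorem \ref{E} (equivalently Theorem \ref{CE}), and then exploit tightness through Lemma \ref{sss}. Concretely: fix $i$ with $0 < i < d_S(x,y)$. Apply Theorem \ref{E} with cut-off constant $k = M$ to the pair $(x_i, y)$, and apply it also (or use the trivial lower bound) to the pair $(x_{i-1}, y)$ — actually what I want is an upper bound on $\log i(x_i,y)$ in terms of $\log i(x_{i-1},y)$. The cleanest route: by Theorem \ref{CE} applied to $(x_i,y)$ with $k=M$,
\[
\log i(x_i,y) \leq V(M)\Big(\sum_{Z \subseteq S}[d_Z(x_i,y)]_M + \sum_{A\subseteq S}\log[d_A(x_i,y)]_M\Big) + V(M).
\]
Now every subsurface $Z \subsetneq S$ (including $S$ itself handled separately via $d_S(x_i,y) = d_S(x_{i-1},y) - 1$) contributing to the right side must have $\pi_Z(x_i) \neq \emptyset$; since $\{x_j\}$ is a tight geodesic, Lemma \ref{sss} forces $d_Z(x,x_i)\leq M$ or $d_Z(x_i,y)\leq M$. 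If $d_Z(x_i,y)\leq M$ then the term $[d_Z(x_i,y)]_M = 0$ and contributes nothing; so only subsurfaces with $d_Z(x,x_i)\leq M$ survive, and for those, $d_Z(x_{i-1},y) \geq d_Z(x,y) - d_Z(x,x_i) \geq d_Z(x_i,y) - d_Z(x,x_i) - M \geq d_Z(x_i,y) - 2M$ — wait, I should instead directly bound $d_Z(x_i,y)$ by $d_Z(x_{i-1},y) + d_Z(x_{i-1},x_i) \le d_Z(x_{i-1},y) + 3$ using Lemma \ref{oct}, since $d_S(x_{i-1},x_i)=1$. That gives $[d_Z(x_i,y)]_M \le d_Z(x_i,y) \le d_Z(x_{i-1},y) + 3 \leq 4\,[d_Z(x_{i-1},y)]_M$ whenever the left side is nonzero (as then $d_Z(x_{i-1},y)\geq d_Z(x_i,y)-3 > M - 3 \geq 1$), and similarly $\log[d_Z(x_i,y)]_M \le \log(d_Z(x_{i-1},y)+3) \le 2\log[d_Z(x_{i-1},y)]_M$ in the nontrivial range. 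Summing and then bounding $\sum_{Z\subseteq S}[d_Z(x_{i-1},y)]_M + \sum_A \log[\cdots]_M \le 2\log i(x_{i-1},y) + 4$ by the \emph{lower bound} direction of Theorem \ref{E''}/\ref{E} (which is what makes the sum of projections controlled by $\log$ of the intersection number) — hmm, but that lower-bound direction is only proved for $\x(S)=1$ in this excerpt (Theorem \ref{E''} second bullet). For $\x(S)>1$ I would instead use Lemma \ref{basic} in reverse is the wrong direction; the right tool is that the sum of capped projections is $\prec \log i(x_{i-1},y)$, which is one direction of Choi--Rafi / follows from $d_Z \le 2\log i + 2$ applied subsurface-by-subsurface combined with the bounded-geodesic-image packing — I'd cite Theorem \ref{yazawa}'s easy direction or re-derive: each $d_Z(x_{i-1},y) \leq 2\log i_Z(x_{i-1},y) + 2 \leq 2\log i(x_{i-1},y)+2$ and the number of $Z$ with $d_Z > M$ is itself $\prec \log i(x_{i-1},y)$.

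The main obstacle, and the step I'd spend the most care on, is assembling these pieces into the clean multiplicative constant $R = \x(S)\cdot 2^{V(M)}$: I need $\log i(x_i,y) \leq \log i(x_{i-1},y) + \log R = \log i(x_{i-1},y) + V(M) + \log \x(S)$. The structure of the argument should be: (1) cap with $k=M$; (2) kill all subsurfaces $Z$ with $d_Z(x_i,y)\leq M$ since they contribute $0$; (3) for the surviving $Z$, use tightness (Lemma \ref{sss}) to conclude $d_Z(x,x_i)\leq M$, hence these $Z$ are "on the $x$-side", and bound their contribution to $(x_i,y)$-data by a bounded multiple of their contribution to $(x_{i-1},y)$-data (or directly by $\log i(x_{i-1},y)$ using Lemma \ref{basic} subsurface-wise); (4) observe that the full $(x_{i-1},y)$ sum is $\prec \log i(x_{i-1},y)$; (5) plug into Theorem \ref{CE} and collect constants, checking the combinatorial factor (number of surviving subsurfaces, $\leq$ something like $6|\chi(S)|\cdot$(geodesic length), controlled again by Lemma \ref{distance3} — each $Z\subsetneq S$ is disjoint from at most $3$ consecutive geodesic vertices) folds into $\x(S)$ and the exponent in $V(M)$. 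The delicate bookkeeping is making sure all the multiplicative slop (the $4$'s, the $2$'s, the $|\chi(S)|$, the $3$ from Lemma \ref{oct}) is absorbed by the generous $2^{V(M)}$ and the extra $\x(S)$ factor, rather than accumulating across the $i$ iterations — which it does not, precisely because we proved a clean one-step bound $\log i(x_i,y)\leq \log i(x_{i-1},y) + \log R$ and then telescoped.
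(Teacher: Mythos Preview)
Your reduction to a one-step bound and the handling of $\xi(S)=1$ via Lemma \ref{2} are fine. The problem is the $\xi(S)>1$ argument: it cannot produce an \emph{additive} bound on logarithms, only a \emph{multiplicative} one, and that is fatal.

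Concretely: applying Theorem \ref{CE} to the pair $(x_i,y)$ with $k=M$ gives
\[
\log i(x_i,y)\ \le\ V(M)\cdot\Big(\textstyle\sum_{Z}[d_Z(x_i,y)]_M+\sum_A\log[d_A(x_i,y)]_M\Big)+V(M).
\]
Even granting your comparison $[d_Z(x_i,y)]_M \le 4[d_Z(x_{i-1},y)]_M$ and the ``easy direction'' bound $\sum_Z [d_Z(x_{i-1},y)]_M+\cdots \prec \log i(x_{i-1},y)$, what you obtain is
\[
\log i(x_i,y)\ \le\ C\cdot V(M)\cdot \log i(x_{i-1},y)+V(M),
\]
i.e.\ $i(x_i,y)\le 2^{V(M)}\cdot i(x_{i-1},y)^{C\,V(M)}$. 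The exponent $C\,V(M)>1$ is the obstruction: iterating gives $i(x_i,y)\le (\text{const})\cdot i(x,y)^{(C\,V(M))^i}$, not $R^i\cdot i(x,y)$. The multiplicative constant $V(M)$ in Theorem \ref{CE} sits in front of a quantity that is itself of order $\log i(x_{i-1},y)$; there is no way to turn that into the needed additive statement $\log i(x_i,y)\le \log i(x_{i-1},y)+\log R$ along this route.

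The paper's argument avoids this by never applying Theorem \ref{E} to the global pair $(x_i,y)$. Instead it passes to a complementary component $S'$ of $x_i$, fixes a curve $b\in x_{i+1}$ lying in $S'$, and compares $b$ against each arc $a_t$ of $y\cap S'$ (promoted to a curve $A_t\in\pi_{S'}(a_t)$ via Lemma \ref{i}). The point is that for every $W\subseteq S'$ one has $\pi_W(x_i)=\emptyset$, so tightness forces $\pi_W(x_{i+2})\ne\emptyset$, and then Lemma \ref{distance3} plus Theorem \ref{BGIT} give $[d_W(b,A_t)]_M=0$. Thus the entire projection sum in Theorem \ref{E} vanishes, yielding the \emph{absolute} bound $i(b,a_t)\le 2^{V(M)}$. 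The factor $i(x_i,y)$ then enters only as the count $|\{a_t\}|\le i(x_i,y)$ of arcs, and $\xi(S)$ as the bound on $|x_{i+1}|$; summing gives $i(x_{i+1},y)\le \xi(S)\cdot 2^{V(M)}\cdot i(x_i,y)$ directly. The key idea you are missing is that the projection sum can be made to vanish by working in $S-x_i$ rather than in $S$.
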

\begin{proof}
We prove $$i(x_{i+1},y) \leq R\cdot i(x_{i},y) \text{ for all }i,$$ which gives the statement of this theorem.

Suppose $\x(S)=1.$ We can take $R=\frac{2}{3}$ by Lemma \ref{2}.

Suppose $\x(S)>1.$ Let $b\in x_{i+1}$ and let $S'$ be a complementary component of $x_{i}$ such that $b\in C(S')$. We note $\x(S')\geq 1.$ Lastly, we let $\{a_{i}\}$ be the set of arcs obtained by $y\cap S'.$ 

Let $a_{t}\in \{a_{i}\}$; by Lemma \ref{i}, we can choose a component $A_{t} \in \{\pi_{S'}(a_{t})\}$ such that $i(b,a_{t}) \leq i(b,A_{t}).$
With Theorem \ref{E}, we have 
\begin{eqnarray*}
\log i(b,a_{t})&\leq& \log i(b,A_{t})\\&\leq & V(k) \cdot  \bigg( \sum_{Z\subseteq S'}[ d_{Z}(b,A_{t})]_{k}+\sum_{A\subseteq S'} \log [d_{A}(b,A_{t})]_{k}\bigg)  + V(k).
\end{eqnarray*}
Now, we show that $ [d_{W}(b, A_{t})]_{M}=0$ for all $W\subseteq S'$. Since $\pi_{W}(x_{i})=\emptyset$, we have $\pi_{W}(x_{i+2})\neq \emptyset$ by tightness. With Lemma \ref{distance3} and Theorem \ref{BGIT}, we have $[d_{W}(b, y)]_{M}=0.$ By the definition of subsurface projections, we have $d_{W}(b, A_{t})\leq d_{W}(b,y)$; therefore, $ [d_{W}(b, A_{t})]_{M}=0$.
All together, we have $$\log i(b,a_{t})\leq \log i(b,A_{t}) \leq V(M) \cdot  0  + V(M);$$ and we obtain

\begin{equation}
i(b,a_{t})\leq 2^{V(M)}. \tag{$\dagger$}
\end{equation}

Since $\displaystyle i(x_{i+1},y)= \sum_{b\in x_{i+1}} i(b,y)$ and $\displaystyle i(b,y)=\sum_{t} i(b,a_{t}),$ we have 
$$i(x_{i+1},y)\leq  \sum_{b\in x_{i+1}} \bigg(\sum_{t} i(b,a_{t}) \bigg) \leq \sum_{b\in x_{i+1}} \bigg( i(b,a_{t}) \cdot |\{a_{t}\}|  \bigg).$$
We notice that each $a_{t}$ contributes to $i(x_{i},y)$; we have $|\{a_{t}\}|\leq i(x_{i},y).$ Therefore, we have 
$$i(x_{i+1},y)\leq  \sum_{b\in x_{i+1}}  \bigg( i(b,a_{t}) \cdot i(x_{i},y) \bigg).$$
Since $x_{i+1}$ is a multicurve, it can contain at most $\x(S)$ many curves, so we have
$$i(x_{i+1},y) \leq \x(S)\cdot  \bigg( i(b,a_{t})\cdot i(x_{i},y) \bigg).$$ With $(\dagger)$, we obtain $$i(x_{i+1},y) \leq \x(S)\cdot 2^{V(M)}\cdot i(x_{i},y).$$
Lastly, we let $$R=\x(S)\cdot 2^{V(M)}.$$

\end{proof}

\bibliographystyle{plain}
\bibliography{references.bib}

\end{document}